\documentclass[a4paper,12pt]{amsart}

\usepackage{amsmath,amsfonts,amsthm,amssymb,amscd,enumerate,esint,vmargin}
\usepackage[bookmarks, colorlinks=true, pdftitle={Interpolation and embeddings of weighted tent spaces}, pdfauthor={Alex Amenta}]{hyperref}

\newcommand{\CC}{\mathbb{C}}
\newcommand{\NN}{\mathbb{N}}
\newcommand{\RR}{\mathbb{R}}
\newcommand{\ZZ}{\mathbb{Z}}
\newcommand{\ga}{\alpha}

\newcommand{\gd}{\delta}
\renewcommand{\ge}{\varepsilon}

\newcommand{\gf}{\varphi}

\newcommand{\gh}{\eta}

\newcommand{\gk}{\kappa}
\newcommand{\gl}{\lambda}
\newcommand{\gm}{\mu}

\newcommand{\gp}{\pi}
\newcommand{\gq}{\theta}
\newcommand{\gr}{\rho}

\newcommand{\gs}{\sigma}
\newcommand{\gt}{\tau}

\newcommand{\gx}{\xi}
\newcommand{\gy}{\psi}

\newcommand{\gG}{\Gamma}

\newcommand{\gO}{\Omega}

\newcommand{\mc}{\mathcal}
\newcommand{\mb}{\mathbf}
\newcommand{\map}[3]{#1 \colon #2 \rightarrow #3}
\newcommand{\norm}[1]{\left|\left|#1\right|\right|}
\newcommand{\diam}{\operatorname{diam}}
\newcommand{\supp}{\operatorname{supp}}
\newcommand{\dist}{\operatorname{dist}}
\DeclareMathOperator*{\esssup}{ess\,sup}
\newcommand{\sm}{\setminus}
\newcommand{\td}{\tilde}
\newcommand{\wtd}{\widetilde}

\theoremstyle{theorem}
\newtheorem{thm}{Theorem}[section]
\theoremstyle{definition}
\newtheorem{dfn}[thm]{Definition}
\theoremstyle{remark}
\newtheorem{rmk}[thm]{Remark}
\theoremstyle{plain}
\newtheorem{lem}[thm]{Lemma}
\theoremstyle{plain}
\newtheorem{cor}[thm]{Corollary}
\newtheorem{prop}[thm]{Proposition}
\theoremstyle{definition}

\newcommand\blfootnote[1]{%
  \begingroup
  \renewcommand\thefootnote{}\footnote{#1}%
  \addtocounter{footnote}{-1}%
  \endgroup
}

%%%NOTE TO EDITORS: i've modified these definitions (taken from auscher-stahlhut sourcecode); for now barint and bariint require lower and upper limits always. this could be fixed. i apologise for this hack-ish solution.
\def\barint_#1^#2{\mathchoice
            {\mathop{\vrule width 6pt
height 3 pt depth -2.5pt
                    \kern -8.8pt
\int_{#1}^{#2} \kern 0pt}}%
            {\mathop{\vrule width 5pt height
3 pt depth -2.6pt
                    \kern -6.5pt
\intop_{#1}^{#2} \kern -4pt}\nolimits_{#1}}%
            {\mathop{\vrule width 5pt height
3 pt depth -2.6pt
                    \kern -6pt
\intop_{#1}^{#2} \kern -4pt}\nolimits_{#1}}%
            {\mathop{\vrule width 5pt height
3 pt depth -2.6pt
          \kern -6pt \intop \kern -4pt}\nolimits_{#1}}}
          
\def\bariint_#1^#2{\mathchoice
            {\mathop{\vrule width 10pt
height 3 pt depth -2.5pt
                    \kern -12.8pt
\intop \kern -10pt\int_{#1}^{#2} \kern 0pt}}%
            {\mathop{\vrule width 9pt height
3 pt depth -2.6pt
                    \kern -10.5pt
\intop \kern -10pt\int_{#1}^{#2} \kern -4pt}}%
            {\mathop{\vrule width 9pt height
3 pt depth -2.6pt
                    \kern -10pt
\intop \kern -10pt\int_{#1}^{#2} \kern -4pt}}%
            {\mathop{\vrule width 9pt height
3 pt depth -2.6pt
          \kern -10pt \int_{#1}^{#2} \kern -10pt\intop \kern -4pt}
      }}
      
\renewcommand{\iint}{\int \kern -10pt\int}

\title{Interpolation and embeddings of weighted tent spaces}
\author{Alex Amenta}
\address{Laboratoire de MathŽmatiques d'Orsay, Univ. Paris-Sud, CNRS, Universit\'e
Paris-Saclay, 91405 Orsay, France} 
\curraddr{Delft Institute of Applied Mathematics, Delft University of Technology, P.O. Box
5031, 2628 CD Delft, The Netherlands}

\begin{document}
\maketitle

\blfootnote{2010 \emph{Mathematics Subject Classification}. 42B35 (Primary); 46E30 (Secondary). \\
\emph{Keywords.} Weighted tent spaces, complex interpolation, real interpolation, Hardy--Littlewood--Sobolev embeddings.}

\begin{abstract}
	Given a metric measure space $X$, we consider a scale of function spaces $T^{p,q}_s(X)$, called the \emph{weighted tent space scale}.
	This is an extension of the tent space scale of Coifman, Meyer, and Stein.
	Under various geometric assumptions on $X$ we identify some associated interpolation spaces, in particular certain real interpolation spaces.
	These are identified with a new scale of function spaces, which we call \emph{$Z$-spaces}, that have recently appeared in the work of Barton and Mayboroda on elliptic boundary value problems with boundary data in Besov spaces.
	We also prove Hardy--Littlewood--Sobolev-type embeddings between weighted tent spaces.
\end{abstract}

\tableofcontents

The \emph{tent spaces}, denoted $T^{p,q}$, are a scale of function spaces first introduced by Coifman, Meyer, and Stein \cite{CMS83, CMS85} which have had many applications in harmonic analysis and partial differential equations.
In some of these applications `weighted' tent spaces have been used implicitly.
These spaces, which we denote by $T^{p,q}_s$, seem not to have been considered as forming a scale of function spaces in their own right until the work of Hofmann, Mayboroda, and McIntosh \cite[\textsection 8.3]{HMM11}, in which factorisation and complex interpolation theorems are obtained for these spaces.

In this article we further explore the weighted tent space scale.
In the interests of generality, we consider weighted tent spaces $T^{p,q}_s(X)$ associated with a metric measure space $X$, although our theorems are new even in the classical case where $X = \RR^n$ equipped with the Lebesgue measure.
Under sufficient geometric assumptions on $X$ (ranging from the doubling condition to the assumption that $X = \RR^n$), we uncover two previously unknown novelties of the weighted tent space scale.

First, we identify some real interpolation spaces between $T^{p_0,q}_{s_0}$ and $T^{p_1,q}_{s_1}$ whenever $s_0 \neq s_1$.
In Theorem \ref{rint} we prove that
\begin{equation}
	(T^{p_0,q}_{s_0}, T^{p_1,q}_{s_1})_{\gq,p_\gq} = Z^{p_\gq, q}_{s_\gq}
\end{equation}
for appropriately defined parameters, where the scale of `$Z$-spaces' is defined in Definition \ref{Zdfn}.
We require $p_0,p_1,q > 1$ in this identification, but in Theorem \ref{rint2} we show that in the Euclidean setting the result holds for all $p_0,p_1 > 0$ and $q \geq 1$.
In the Euclidean setting, $Z$-spaces have appeared previously in the work of Barton and Mayboroda \cite{BM16}.
In their notation we have $Z^{p,q}_s(\RR^n) = L(p,ns + 1,q)$.
Barton and Mayboroda show that these function spaces are useful in the study of elliptic boundary value problems with boundary data in Besov spaces.
The connection with weighted tent spaces shown here is new.

Second, we have continuous embeddings
\begin{equation*}
	T^{p_0,q}_{s_0} \hookrightarrow T^{p_1,q}_{s_1}
\end{equation*}
whenever the parameters satisfy the relation
\begin{equation}\label{HLS}
	s_1 - s_0 = \frac{1}{p_1} - \frac{1}{p_0}.
\end{equation}
This is Theorem \ref{embthm1}.
Thus a kind of Hardy--Littlewood--Sobolev embedding theorem holds for the weighted tent space scale, and by analogy we are justified in referring to the parameter $s$ in $T^{p,q}_s$ as a \emph{regularity} parameter.

We also identify complex interpolation spaces between weighted tent spaces in the Banach range.
This result is already well-known in the Euclidean setting, and its proof does not involve any fundamentally new arguments, but we include it here for completeness.

These results in this paper play a crucial role in recent work of the author and Pascal Auscher \cite{AA16}, in which we use weighted tent spaces and $Z$-spaces to construct abstract homogeneous Hardy--Sobolev and Besov spaces associated with elliptic differential operators with rough coefficients, extending the abstract Hardy space techniques initiated independently by Auscher, McIntosh, and Russ \cite{AMR08} and Hofmann and Mayboroda \cite{HM09}.

\subsection*{Notation}

Given a measure space $(X,\gm)$, we write $L^0(X)$ for the set of $\gm$-measurable functions with values in the extended complex numbers $\CC \cup \{\pm \infty, \pm i\infty\}$.
As usual, by a `measurable function', we actually mean an equivalence class of measurable functions which are equal except possibly on a set of measure zero.
We will say that a function $f \in L^0(X)$ is essentially supported in a subset $E \subset X$ if ${\gm\{x \in X \sm E : f(x) \neq 0\} = 0}$.

A \emph{quasi-Banach space} is a complete quasi-normed vector space; see for example \cite[\textsection 2]{nK03} for further information.
If $B$ is a quasi-Banach space, we will write the quasi-norm of $B$ as either $\norm{\cdot}_B$ or $\norm{\cdot \mid B}$, according to typographical needs.

For $1 \leq p \leq \infty$, we let $p^\prime$ denote the H\"older conjugate of $p$, which is defined by the relation
\begin{equation*}
	1 = \frac{1}{p} + \frac{1}{p^\prime},
\end{equation*}
with $1/\infty := 0$.
For $0 < p,q \leq \infty$, we define the number
\begin{equation*}
	\gd_{p,q} := \frac{1}{q} - \frac{1}{p},
\end{equation*}
again with $1/\infty := 0$.
This shorthand will be used often throughout this article.
We will frequently use the the identities
\begin{align*}
	\gd_{p,q} + \gd_{q,r} &= \gd_{p,r}, \\
	\gd_{p,q} &= \gd_{q^\prime, p^\prime}, \\
	1/q &= \gd_{\infty,q} = \gd_{q^\prime, 1}.
\end{align*}

As is now standard in harmonic analysis, we write $a \lesssim b$ to mean that $a \leq Cb$ for some unimportant constant $C \geq 1$ which will generally change from line to line.
We also write $a \lesssim_{c_1,c_2,\ldots} b$ to mean that $a \leq C(c_1,c_2,\ldots) b$.

\subsection*{Acknowledgements}
I thank Pierre Portal and Pascal Auscher for their comments, suggestions, corrections, and support.
I also thank Yi Huang, Gennady Uraltsev, and Christian Zillinger for interesting discussions regarding technical aspects of this work.
The referee provided some valuable corrections and suggestions, so I thank them too.

Some of this work was carried out during the Junior Trimester Program on Optimal Transportation at the Hausdorff Research Institute for Mathematics in Bonn.
I thank the Hausdorff Institute for their support, as well as the participants of the trimester seminar for their comments.
I also thank Lashi Bandara and the \emph{Pr\'efecture de Bobigny} for indirectly influencing my participation in this program.

Finally, I acknowledge financial support from the Australian Research Council Discovery Grant DP120103692, and also from the ANR project ``Harmonic analysis at its boundaries'' ANR-12-BS01-0013-01.

\section{Preliminaries}

\subsection{Metric measure spaces}

A \emph{metric measure space} is a triple $(X,d,\gm)$, where $(X,d)$ is a nonempty metric space and $\gm$ is a Borel measure on $X$.
For every $x \in X$ and $r > 0$, we write $B(x,r) := \{y \in X : d(x,y) < r\}$ for the ball of radius $r$, and we also write $V(x,r) := \gm(B(x,r))$ for the volume of this ball.
The \emph{generalised half-space} associated with $X$ is the set $X^+ := X \times \RR_+$, equipped with the product topology and the product measure $d\gm(y) \, dt/t$.

We say that $(X,d,\gm)$ is \emph{nondegenerate} if
\begin{equation}
	0 < V(x,r) < \infty \quad \text{for all $x \in X$ and $r > 0$.}
\end{equation}
This immediately implies that the measure space $(X,\gm)$ is $\gs$-finite, as $X$ may be written as an increasing sequence of balls
\begin{equation}\label{exhaustion}
	X = \bigcup_{n \in \NN} B(x_0,n)
\end{equation}
for any point $x_0 \in X$.
Nondegeneracy also implies that the metric space $(X,d)$ is separable \cite[Proposition 1.6]{BB11}.
To rule out pathological behaviour (which is not particularly interesting from the viewpoint of tent spaces), we will always assume nondegeneracy.

Generally we will need to make further geometric assumptions on $(X,d,\gm)$.
In this article, the following two conditions will be used at various points.
We say that $(X,d,\gm)$ is \emph{doubling} if there exists a constant $C \geq 1$ such that
\begin{equation*}
	V(x,2r) \leq CV(x,r) \quad \text{for all $(x,r) \in X^+$}.
\end{equation*}
A consequence of the doubling condition is that there exists a minimal number $n \geq 0$, called the \emph{doubling dimension} of $X$, and a constant $C \geq 1$ such that
\begin{equation*}
	V(x,R) \leq C (R/r)^n V(x,r)
\end{equation*}
for all $x \in X$ and $0 < r \leq R < \infty$.

For $n > 0$, we say that $(X,d,\gm)$ is \emph{AD-regular of dimension $n$} if there exists a constant $C \geq 1$ such that
\begin{equation}\label{AD}
	C^{-1} r^n \leq V(x,r) \leq Cr^n
\end{equation}
for all $x \in X$ and all $r < \diam(X)$.
One can show that AD-regularity (of some dimension) implies doubling.
Note that if $X$ is unbounded and AD-regular of dimension $n$, then \eqref{AD} holds for all $x \in X$ and all $r > 0$.

\subsection{Unweighted tent spaces}

Throughout this section we suppose that $(X,d,\gm)$ is a nondegenerate metric measure space.
We will not assume any further geometric conditions on $X$ without explicit mention.
All of the results here are known, at least in some form.
We provide statements for ease of reference and some proofs for completeness.

For $x \in X$ we define the \emph{cone with vertex $x$} by
\begin{equation*}
	\gG(x) := \{(y,t) \in X^+ : y \in B(x,t)\},
\end{equation*}
and for each ball $B \subset X$ we define the \emph{tent with base $B$} by
\begin{equation*}
	T(B) := X^+ \sm \left( \bigcup_{x \notin B} \gG(x) \right).
\end{equation*}
Equivalently, $T(B)$ is the set of points $(y,t) \in X^+$ such that $B(y,t) \subset B$.
From this characterisation it is clear that if $(y,t) \in T(B)$, then $t \leq r_B$, where we define
\begin{equation*}
	r_B := \sup\{r > 0 : \text{$B(y,r) \subset B$ for some $y \in X$}\}.
\end{equation*}
Note that it is possible to have $r_{B(y,t)} > t$.

Fix $q \in (0,\infty)$ and $\ga \in \RR$.
For $f \in L^0(X^+)$, define functions $\mc{A}^q f$ and $\mc{C}_\ga^q f$ on $X$ by
\begin{equation}\label{lusin}
	\mc{A}^q f(x) := \left( \iint_{\gG(x)} |f(y,t)|^q \, \frac{d\gm(y)}{V(y,t)} \, \frac{dt}{t} \right)^{1/q}
\end{equation}
and
\begin{equation}\label{carleson}
	\mc{C}_\ga^q f(x) := \sup_{B \ni x} \frac{1}{\gm(B)^\ga}\left( \frac{1}{\gm(B)} \iint_{T(B)} |f(y,t)|^q \, d\gm(y) \, \frac{dt}{t}\right)^{1/q}
\end{equation}
for all $x \in X$, where the supremum in \eqref{carleson} is taken over all balls $B \subset X$ containing $x$.
We abbreviate $\mc{C}^q := \mc{C}_0^q$.
Note that the integrals above are always defined (though possibly infinite) as the integrands are non-negative, and so we need not assume any local $q$-integrability of $f$.
We also define
\begin{equation}\label{lusin-infty}
	\mc{A}^\infty f(x) := \esssup_{(y,t) \in \gG(x)} |f(y,t)|
\end{equation}
and
\begin{equation*}
	\mc{C}_\ga^\infty f(x) := \sup_{B \ni x} \frac{1}{\gm(B)^{1 + \ga}} \esssup_{(y,t) \in T(B)} |f(y,t)|.
\end{equation*}

\begin{lem}\label{ACsemicontinuity}
	Suppose that $q \in (0,\infty]$,  $\ga \in \RR$, and $f \in L^0(X^+)$.
	Then the functions $\mc{A}^q f$ and $\mc{C}_\ga^q f$ are  lower semicontinuous.
\end{lem}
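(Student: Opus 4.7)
The plan is to establish lower semicontinuity by showing, for each $x \in X$ and each sequence $x_n \to x$ in $X$, that $g(x) \leq \liminf_n g(x_n)$ where $g$ is the functional in question. The Carleson functional $\mc{C}_\ga^q f$ is the easy case: it is defined as a supremum over all balls $B$ containing $x$, and balls are open in the metric $d$. If $\mc{C}_\ga^q f(x) > \gl$, some specific ball $B \ni x$ realises a value exceeding $\gl$, and the same $B$ contains every $x'$ sufficiently close to $x$; hence $\mc{C}_\ga^q f$ exceeds $\gl$ on an open neighbourhood of $x$. This argument is indifferent to $q$, so it handles all $q \in (0,\infty]$ at once.

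For the Lusin functional $\mc{A}^q f$ with $q < \infty$, the core observation is a continuity property of the cones: if $(y,t) \in \gG(x)$, i.e.\ $d(x,y) < t$, then continuity of the metric gives $d(x_n,y) < t$ for all sufficiently large $n$, so $(y,t) \in \gG(x_n)$ eventually. Equivalently, pointwise on $X^+$,
\begin{equation*}
	\mb{1}_{\gG(x)}(y,t) \leq \liminf_n \mb{1}_{\gG(x_n)}(y,t).
\end{equation*}
Since $\gG(x)$ is the preimage of $(0,\infty)$ under the continuous map $(y,t) \mapsto t - d(x,y)$, it is open, so the indicator is Borel measurable. Fatou's lemma applied to the non-negative integrands $|f(y,t)|^q \mb{1}_{\gG(x_n)}(y,t)$ against the measure $d\gm(y)/V(y,t) \cdot dt/t$ then yields $\mc{A}^q f(x)^q \leq \liminf_n \mc{A}^q f(x_n)^q$, as required.

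The case $q = \infty$ is the main obstacle, because an essential supremum is not directly amenable to Fatou. I would upgrade the cone-continuity principle to a measure-theoretic statement. Fix $\gl < \mc{A}^\infty f(x)$ and set
\begin{equation*}
	E := \{(y,t) \in \gG(x) : |f(y,t)| > \gl\},
\end{equation*}
which, by definition of essential supremum, has positive measure in $X^+$. By the cone-continuity observation above, every point of $E$ lies in $\gG(x_n)$ for all sufficiently large $n$, so $E = \bigcup_{N \in \NN} E_N$, where $E_N := E \cap \bigcap_{n \geq N} \gG(x_n)$. The sets $E_N$ are increasing with union $E$, so continuity of measure from below produces some $N$ with $E_N$ of positive measure; for every $n \geq N$ this $E_N \subset \gG(x_n)$ witnesses $\mc{A}^\infty f(x_n) \geq \gl$. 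Hence $\liminf_n \mc{A}^\infty f(x_n) \geq \gl$, and letting $\gl \uparrow \mc{A}^\infty f(x)$ completes the proof.
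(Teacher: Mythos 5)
Your proof is correct. The Carleson argument matches the paper's exactly (the paper states it only for $q=\infty$ but, as you note, it is oblivious to $q$; for $q<\infty$ the paper just cites an earlier reference). Your Fatou argument for $\mc{A}^q$, $q<\infty$, is a clean self-contained alternative to the citation, resting on the same cone-openness observation. The one place where your route differs in substance is $\mc{A}^\infty$: you prove \emph{sequential} lower semicontinuity, writing $E$ as the increasing union of the sets $E_N = E \cap \bigcap_{n\ge N}\gG(x_n)$ (which depend on the chosen sequence) and invoking continuity of measure from below; the paper instead introduces the vertically translated cones $\gG(x)+s := \{(y,t): y \in B(x,t-s)\}$, writes $O = \bigcup_n O \cap (\gG(x)+n^{-1})$ (sets depending only on $x$), and uses the inclusion $\gG(x)+s \subset \gG(x')$ for $x'\in B(x,s)$ to exhibit an explicit open neighbourhood on which $\mc{A}^\infty f > \gl$. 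Both use the same measure-theoretic mechanism; the paper's version proves openness of the superlevel set directly, whereas yours relies on the equivalence of sequential and topological lower semicontinuity, which is fine since $(X,d)$ is a metric space. Your version does not produce an explicit neighbourhood, while the paper's does, which is a marginal plus for the latter but not needed for the lemma as stated.
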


\begin{proof}
	For $q \neq \infty$ see \cite[Lemmas A.6 and A.7]{aA14}.
	It remains only to show that $\mc{A}^\infty f$ and $\mc{C}_\ga^\infty f$ are lower semicontinuous for $f \in L^0(X^+)$.
	
	For each $s > 0$ write
	\begin{equation*}
		\gG(x) + s := \{(y,t) \in X^+ : (y,t-s) \in \gG(x)\}
		= \{(y,t) \in X^+ : y \in B(x,t-s)\}.
	\end{equation*}
	Geometrically $\gG(x) + s$ is a `vertically translated' cone, and $\gG(x) + s \supset \gG(x) + r$ for all $r < s$.
	The triangle inequality implies that
	\begin{equation*}
		\gG(x) + s \subset \gG(x^\prime) \quad \text{for all $x^\prime \in B(x,s)$}.
	\end{equation*}
	
	To show that $\mc{A}^\infty f$ is lower semicontinuous, suppose that $x \in X$ and $\gl > 0$ are such that $(\mc{A}^\infty f)(x) > \gl$.
	Then the set $O := \{(y,t) \in \gG(x) : |f(y,t)| > \gl\}$ has positive measure.
	We have
	\begin{equation*}
		O = \bigcup_{n=1}^\infty O \cap (\gG(x) + n^{-1}).
	\end{equation*}
	Since the sequence of sets $O \cap (\gG(x) + n^{-1})$ is increasing in $n$, and since $O$ has positive measure, we find that there exists $n \in \NN$ such that $O \cap (\gG(x) + n^{-1})$ has positive measure.
	Thus for all $x^\prime \in B(x,n^{-1})$,
	\begin{equation*}
		\{(y,t) \in \gG(x^\prime) : |f(y,t)| > \gl\}
		\supset O \cap (\gG(x) + n^{-1})
	\end{equation*}
	has positive measure, and so $(\mc{A}^\infty f)(x^\prime) > \gl$.
	Therefore $\mc{A}^\infty f$ is lower semicontinuous.
	
	The argument for $\mc{C}_\ga^\infty$ is simpler.
	We have $(\mc{C}_\ga^\infty f)(x) > \gl$ if and only if there exists a ball $B \ni x$ such that
	\begin{equation*}
		\frac{1}{\gm(B)^{1 + \ga}} \esssup_{(y,t) \in T(B)} |f(y,t)| > \gl.
	\end{equation*}
	This immediately yields $(\mc{C}_\ga^\infty f)(x^\prime) > \gl$ for all $x^\prime \in B$, and so $\mc{C}_\ga^\infty f$ is lower semicontinuous.
\end{proof}

\begin{dfn}
	For $p \in (0,\infty)$ and $q \in (0,\infty]$, the \emph{tent space} $T^{p,q}(X)$ is the set
	\begin{equation*}
		T^{p,q}(X) := \{f \in L^0(X^+) : \mc{A}^q f \in L^p(X)\}
	\end{equation*}
	equipped with the quasi-norm
	\begin{equation*}
		\norm{f}_{T^{p,q}(X)} := \norm{\mc{A}^q f}_{L^p(X)}.
	\end{equation*}
	We define $T^{\infty,q}(X)$ by
	\begin{equation*}
		T^{\infty,q}(X) := \{f \in L^0(X^+) : \mc{C}^q f \in L^\infty(X)\}
	\end{equation*}
	equipped with the corresponding quasi-norm.
	We define $T^{\infty,\infty}(X) := L^\infty(X^+)$ with equal norms.
\end{dfn}

For the sake of notational clarity, we will write $T^{p,q}$ rather than $T^{p,q}(X)$ unless we wish to emphasise a particular choice of $X$.
Although we will always refer to tent space `quasi-norms', these are norms when $p,q \geq 1$.

\begin{rmk}
	Our definition of $\mc{A}^\infty f$ gives a function which is less than or equal to the corresponding function defined by Coifman, Meyer, and Stein \cite{CMS85}, which uses suprema instead of essential suprema.
	We also do not impose any continuity conditions in our definition of $T^{p,\infty}$.
	Therefore our space $T^{p,\infty}(\RR^n)$ is strictly larger than the Coifman--Meyer--Stein version.
	\end{rmk}

By a \emph{cylinder} we mean a subset $C \subset X^+$ of the form $C = B(x,r) \times (a,b)$ for some $(x,r) \in X^+$ and $0 < a < b < \infty$.
We say that a function $f \in L^0(X^+)$ is \emph{cylindrically supported} if it is essentially supported in a cylinder.
In general cylinders may not be precompact, and so the notion of cylindrical support is more general than that of compact support.
For all $p,q \in (0,\infty]$ we define
\begin{equation*}
	T^{p,q;c} := \{f \in T^{p,q} : \text{$f$ is cylindrically supported}\}.
\end{equation*}
and
\begin{equation*}
	L^p_c(X^+) := \{f \in L^p(X^+) : \text{$f$ is cylindrically supported}\}.
\end{equation*}

A straightforward application of the Fubini--Tonelli theorem shows that for all $q \in (0,\infty)$ and for all $f \in L^0(X^+)$,
\begin{equation*}
	\norm{f}_{T^{q,q}} = \norm{f}_{L^q(X^+)},
\end{equation*}
and so $T^{q,q} = L^q(X^+)$.
When $q = \infty$ this is true by definition.

\begin{prop}\label{density-completeness}
	For all $p,q \in (0,\infty)$, the subspace $T^{p,q;c} \subset T^{p,q}$ is dense in $T^{p,q}$.
	Furthermore, if $X$ is doubling, then for all $p,q \in (0,\infty]$, $T^{p,q}$ is complete, and when $p,q \neq \infty$, $L^q_c(X^+)$ is densely contained in $T^{p,q}$.
\end{prop}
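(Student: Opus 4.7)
Fix $f \in T^{p,q}$ with $p, q \in (0,\infty)$ and a basepoint $x_0 \in X$. Set $C_n := B(x_0, n) \times (n^{-1}, n)$ and $f_n := f \chi_{C_n}$. Since $|f_n| \leq |f|$ pointwise, $\mc{A}^q f_n \leq \mc{A}^q f \in L^p(X)$, so $f_n \in T^{p,q;c}$. Two applications of dominated convergence --- first in the cone integral defining $\mc{A}^q(f - f_n)(x)$, which converges to $0$ for $\gm$-a.e.\ $x$, and then in the outer $L^p(X)$ norm with dominating function $\mc{A}^q f$ --- yield $\norm{f - f_n}_{T^{p,q}} \to 0$.

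\textbf{Completeness.} Assume $X$ is doubling, let $(f_n)$ be Cauchy in $T^{p,q}$, and first suppose $p, q \in (0,\infty)$. Pass to a subsequence satisfying $\sum_n \norm{\mc{A}^q(f_{n+1} - f_n)}_{L^p}^r < \infty$, with $r := \min(1, p, q)$ chosen so that the relevant quasi-norm triangle inequalities for $\mc{A}^q$ and $L^p$ both close up. Setting $F := \sum_n |f_{n+1} - f_n|$, monotone convergence combined with the (quasi)subadditivity of $\mc{A}^q$ in its argument shows $\mc{A}^q F \in L^p(X)$; in particular $\mc{A}^q F(x) < \infty$ for $\gm$-a.e.\ $x \in X$, whence $F$ is finite $d\gm(y)\,dt/t$-a.e.\ on the cone $\gG(x)$. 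The key step is transferring this finiteness to $X^+$: since $\{x \in X : (y,t) \in \gG(x)\} = B(y,t)$, which has strictly positive $\gm$-measure by nondegeneracy, a Tonelli argument on $X \times X^+$ forces $F < \infty$ to hold $\gm \otimes dt/t$-a.e.\ on $X^+$. Hence $(f_n(y,t))$ is pointwise Cauchy a.e.\ and converges to some $f \in L^0(X^+)$, and two applications of Fatou's lemma (first in the cone integral, then in $L^p(X)$) give $\norm{f - f_n}_{T^{p,q}} \leq \liminf_m \norm{f_m - f_n}_{T^{p,q}} \to 0$. The endpoint cases $p = \infty$ or $q = \infty$ follow by analogous arguments with $\mc{C}^q$ or $\mc{A}^\infty$ in place of $\mc{A}^q$ and tents in place of cones, using that $X^+$ is covered by the tents $T(B(y, 2t))$ and that countable subcovers exist by separability.

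\textbf{Third claim.} By the first claim it suffices to approximate $g \in T^{p,q;c}$, essentially supported in some cylinder $C = B(x_0, R) \times (a, b)$, by elements of $L^q_c(X^+)$. The cylinder $C$ has finite $\gm \otimes dt/t$ measure, and the doubling hypothesis delivers a uniform lower bound $V(y, a) \gtrsim_C 1$ for $y \in B(x_0, R)$ (via the inclusion $B(x_0,R) \subset B(y, 2R)$ and the doubling dimension), which gives the continuous inclusion $L^q_c(X^+) \subset T^{p,q}$. The truncations $g_M := g \chi_{\{|g| \leq M\}}$ thus belong to $L^\infty(C) \subset L^q_c(X^+)$, and two applications of dominated convergence as in the first claim yield $\norm{g - g_M}_{T^{p,q}} \to 0$. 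The main obstacle throughout is the Tonelli step in the completeness proof together with, in the quasi-Banach range $\min(p, q) < 1$, careful bookkeeping of the quasi-norm exponents that appear when summing the series defining $F$; the Banach range $p, q \geq 1$ is noticeably more transparent.
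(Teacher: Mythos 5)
Your proof of the first claim (density of $T^{p,q;c}$) is exactly the paper's argument: truncate to cylinders $B(x_0,k) \times (k^{-1},k)$ and apply dominated convergence twice.

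For the second and third claims, you take a genuinely different route. The paper simply cites \cite[Proposition 3.5]{aA14} for both (with a footnote flagging that the $q = \infty$ case is not covered there and needs Lemma \ref{loclem}). The approach in that reference is via a local coincidence estimate of the form $\norm{f}_{L^q(K)} \lesssim_K \norm{f}_{T^{p,q}}$ for cylinders $K$: a $T^{p,q}$-Cauchy sequence is then $L^q_\loc$-Cauchy, which furnishes the candidate limit. Your replacement is the Tonelli observation --- since $\{x : (y,t) \in \gG(x)\} = B(y,t)$ has strictly positive measure by nondegeneracy, finiteness of $\mc{A}^q F$ for a.e.\ $x$ forces $F < \infty$ for a.e.\ $(y,t) \in X^+$ --- combined with a fast Cauchy subsequence with $\sum_n \norm{f_{n+1}-f_n}^r < \infty$ for $r = \min(1,p,q)$ and a double Fatou. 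This is correct and arguably cleaner for $p,q < \infty$, as it extracts the a.e.\ limit without any local coincidence machinery. Your argument for the third claim (using doubling to get $V \gtrsim_C 1$ on the cylinder $C$, hence $L^q_c \hookrightarrow T^{p,q}$, then truncating in magnitude) is also correct and is in the same spirit as the reference.

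Two points worth tightening. First, the endpoint cases of completeness are only gestured at. For $p = \infty$, $q < \infty$ the Tonelli-on-cones step does not apply directly; you correctly indicate replacing cones by tents and covering $X^+$ by countably many tents, but you should also note that the Fatou step for $\mc{C}^q$ involves a $\sup_B \liminf_m \leq \liminf_m \sup_B$ exchange, and that the $L^\infty$-Fatou inequality $\norm{\liminf g_m}_\infty \leq \liminf \norm{g_m}_\infty$ needs its own (short) justification. For $q = \infty$, $p < \infty$ your cone Tonelli argument does go through since $\mc{A}^\infty F(x) < \infty$ implies $F < \infty$ a.e.\ on $\gG(x)$, but you again need Fatou for essential suprema, which is not in the same form as the $L^q$ Fatou used earlier. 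These are exactly the spots where the paper invokes Lemma \ref{loclem}, so it is worth making explicit that your direct argument genuinely bypasses that lemma only after these details are supplied. Second, in the quasi-Banach bookkeeping, the exponent that makes $\norm{\cdot}_{T^{p,q}}^r$ subadditive is $r = \min(1,p,q)$ regardless of the relative order of $p$ and $q$; your formulation is correct, but a reader would want the two-step verification (inner $\mc{A}^q$-subadditivity with exponent $\min(1,q)$, then outer $L^p$-subadditivity) spelled out once.
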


\begin{proof}
	The second statement has already been proven in \cite[Proposition 3.5]{aA14},\footnote{The cases where $q = \infty$ are not covered there. The same proof works---the only missing ingredient is Lemma \ref{loclem}, which we defer to the end of the article.} so we need only prove the first statement.
	Suppose $f \in T^{p,q}$ and fix a point $x_0 \in X$.
	For each $k \in \NN$, define
	\begin{equation*}
		C_k := B(x_0,k) \times (k^{-1},k) \quad \text{and} \quad f_k := \mb{1}_{C_k} f.
	\end{equation*}
	Then each $f_k$ is cylindrically supported.
	We have
	\begin{align*}
		\lim_{k \to \infty} \norm{f - f_k}_{T^{p,q}}^p
		&= \lim_{k \to \infty} \int_X \mc{A}^q(\mb{1}_{C_k^c} f)(x)^p \, d\gm(x) \\
		&= \int_X \lim_{k \to \infty} \mc{A}^q(\mb{1}_{C_k^c} f)(x)^p \, d\gm(x) \\
		&= \int_X \left( \lim_{k \to \infty} \iint_{\gG(x)} |(\mb{1}_{C_k^c} f)(y,t)|^q \, \frac{d\gm(y)}{V(y, t)} \, \frac{dt}{t} \right)^{p/q} \, d\gm(x) \\
		&= \int_X \left( \iint_{\gG(x)} \lim_{k \to \infty} |(\mb{1}_{C_k^c} f)(y,t)|^q \, \frac{d\gm(y)}{V(y, t)} \, \frac{dt}{t} \right)^{p/q} \, d\gm(x) \\
		&= 0.
	\end{align*}
	All interchanges of limits and integrals follow by dominated convergence.
	Hence we have $f = \lim_{k \to \infty} f_k$, which completes the proof. 
\end{proof}

Recall the following duality from \cite[Propositions 3.10 and 3.15]{aA14}.

\begin{prop}\label{wt-duality}
	Suppose that $X$ is doubling, $p \in [1,\infty)$, and $q \in (1,\infty)$.
	Then the $L^2(X^+)$ inner product
	\begin{equation}\label{dualitypairing}
		\langle f,g \rangle := \iint_{X^+} f(x,t) \overline{g(x,t)} \, d\gm(x) \, \frac{dt}{t}
	\end{equation}
	identifies the dual of $T^{p,q}$ with $T^{p^\prime,q^\prime}$.
\end{prop}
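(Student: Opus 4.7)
The plan is to prove the duality in two parts: a continuous embedding $T^{p',q'} \hookrightarrow (T^{p,q})^*$ via the pairing $\langle\cdot,\cdot\rangle$, and surjectivity of this embedding via a local $L^{q'}$ representation followed by a global linearization estimate.

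For the embedding, I would apply Fubini--Tonelli using the key identity $\mb{1}_{\gG(x)}(y,t) = \mb{1}_{B(y,t)}(x)$ together with $V(y,t) = \gm(B(y,t))$:
\[
	\iint_{X^+} |f(y,t) g(y,t)| \, d\gm(y) \, \frac{dt}{t} = \int_X \iint_{\gG(x)} |f(y,t) g(y,t)| \, \frac{d\gm(y)}{V(y,t)} \, \frac{dt}{t} \, d\gm(x).
\]
H\"older in the inner integral (exponents $q, q'$) and then the outer integral (exponents $p, p'$) bounds the right-hand side by $\|f\|_{T^{p,q}} \|g\|_{T^{p',q'}}$. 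When $p=1$ (so $p'=\infty$), the outer H\"older step must be replaced by the classical tent-space inequality $\int_X \mc{A}^q f \cdot \mc{C}^{q'} g \, d\gm \lesssim \|f\|_{T^{1,q}} \|g\|_{T^{\infty,q'}}$, proved via a stopping-time argument. Injectivity of the embedding follows because $\langle f,g\rangle = 0$ for all $f \in L^q_c(X^+)$ forces $g=0$ by localization plus $L^q$-$L^{q'}$ duality.

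For surjectivity in the range $p>1$, fix $\ell \in (T^{p,q})^*$. For each cylinder $C = B(x_0,r) \times (a,b)$, the inclusion $L^q(C) \hookrightarrow T^{p,q}$ is continuous: nondegeneracy and doubling give $V(y,t) \geq c_C > 0$ uniformly on $C$, so for $f$ essentially supported in $C$ the cone function $\mc{A}^q f$ is essentially supported in the enlarged ball $B(x_0,r+b)$ and bounded pointwise by a constant times $\|f\|_{L^q(C)}$, giving finite $L^p$-norm. Thus $\ell|_{L^q(C)}$ is bounded, and since $q > 1$, $L^q$-$L^{q'}$ duality produces $g_C \in L^{q'}(C)$ representing $\ell$ on $L^q(C)$. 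Exhausting $X^+$ by an increasing sequence of cylinders $(C_k)_{k \in \NN}$ and using uniqueness of the $g_{C_k}$ for consistency, I assemble $g \in L^{q'}_{\loc}(X^+)$ with $\ell(f) = \langle f,g\rangle$ for every $f \in L^q_c(X^+)$.

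The main obstacle is to promote this local representation to the global bound $\|g\|_{T^{p',q'}} \leq \|\ell\|$. Truncating $g_k := \mb{1}_{C_k} g \in L^{q'}(X^+)$, I would use a two-layer linearization: by $L^{p'}$-$L^p$ duality, $\|\mc{A}^{q'}g_k\|_{L^{p'}}$ is the supremum of $\int_X h(x) \mc{A}^{q'} g_k(x) \, d\gm(x)$ over $h \in L^p(X)_+$ with $\|h\|_{L^p} \leq 1$; by cone-wise $L^{q'}$-$L^q$ duality, $\mc{A}^{q'} g_k(x)$ equals the supremum of $\iint_{\gG(x)} \overline{g_k}\, \gf_x \, d\gm(y)/V(y,t) \, dt/t$ over unit $\gf_x \in L^q(\gG(x))$. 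Choosing a near-optimal measurable selection $x \mapsto \gf_x$ (handled by discretization along a countable dense family) and applying Fubini converts this into a pairing $\langle F, g_k\rangle$ for a test function $F(y,t) = V(y,t)^{-1}\int_{B(y,t)} h(x)\gf_x(y,t)\,d\gm(x)$ whose $T^{p,q}$ norm is controlled by $\|h\|_{L^p}$ via a H\"older estimate in the $x$-variable. Since $F \mb{1}_{C_k} \in L^q_c(X^+)$, this pairing equals $\ell(F \mb{1}_{C_k})$, with modulus $\leq \|\ell\|$. Taking the supremum over $h$ gives $\|g_k\|_{T^{p',q'}} \leq \|\ell\|$, and monotone convergence $\mc{A}^{q'} g = \lim_k \mc{A}^{q'} g_k$ promotes this to $g$; density of $L^q_c(X^+) \subset T^{p,q}$ (Proposition \ref{density-completeness}) then identifies $\ell$ with $\langle\cdot,g\rangle$ globally. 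The remaining case $p=1$, giving $(T^{1,q})^* = T^{\infty,q'}$, is handled via a separate atomic-decomposition argument using the CMS inequality above.
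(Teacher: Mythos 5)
Your route departs from the proof the paper relies on: Proposition \ref{wt-duality} is quoted from \cite[Propositions 3.10 and 3.15]{aA14}, where duality is obtained by the Harboure--Torrea--Viviani method. There $Hf(x) := \mb{1}_{\gG(x)} f$ embeds $T^{p,q}$ isometrically as a \emph{complemented} subspace of the Bochner space $L^p(X; L^q(X^+))$, and the dual of $T^{p,q}$ is read off from the (standard) dual of that vector-valued Lebesgue space once the complementing projection is shown bounded on both $L^p(X;L^q)$ and $L^{p'}(X;L^{q'})$. The analytic content is thus concentrated in one maximal-type bound. You instead attempt the original Coifman--Meyer--Stein-style argument; that is a legitimate alternative, but as written it has two gaps. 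First, your stated `CMS inequality' for $p=1$,
\[
  \int_X \mc{A}^q f \cdot \mc{C}^{q'} g \, d\gm \lesssim \|f\|_{T^{1,q}}\|g\|_{T^{\infty,q'}},
\]
is vacuous --- the right-hand side is exactly $\|\mc{A}^q f\|_{L^1}\|\mc{C}^{q'}g\|_{L^\infty}$, so this is just H\"older. The inequality that actually needs the stopping-time argument is $|\langle f,g\rangle| \lesssim \int_X \mc{A}^q f \cdot \mc{C}^{q'}g\,d\gm$, and it is needed precisely because the cone-wise H\"older you set up produces $\mc{A}^{q'}g$ (not $\mc{C}^{q'}g$), which typically fails to be in $L^\infty$ for $g \in T^{\infty,q'}$.

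Second, and more seriously, the crux of the surjectivity step --- the global estimate $\|g\|_{T^{p',q'}} \lesssim \|\ell\|$ --- is asserted rather than proved. With the optimal (power-type) choice of $\gf_x$ the test function becomes
\[
  F(y,t) = |g_k(y,t)|^{q'-2}\, g_k(y,t)\cdot \frac{1}{V(y,t)}\int_{B(y,t)} h(x)\,\mc{A}^{q'}g_k(x)^{1-q'}\,d\gm(x),
\]
and the bound $\|F\|_{T^{p,q}} \lesssim \|h\|_{L^p}$ does not follow from ``a H\"older estimate in the $x$-variable.'' The ball average is a Hardy--Littlewood-type maximal function of $h\cdot(\mc{A}^{q'}g_k)^{1-q'}$, and one must carefully interleave the maximal bound, the weight $|g_k|^{q'}$ that survives inside the cone integral, and the outer $L^p$ estimate; in the tent-space literature this is where the proof bifurcates (e.g.\ $p\le q'$ versus $p>q'$, or a good-$\lambda$ inequality is invoked), and it is the genuinely hard part. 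Using this explicit choice of $\gf_x$ also removes the measurable-selection issue entirely, so invoking ``discretization along a countable dense family'' adds friction without paying for the estimate you actually need. Until that estimate is supplied, the surjectivity half of your proof is incomplete.
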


Suppose that $p \in (0,1]$, $q \in [p,\infty]$, and $B \subset X$ is a ball.
We say that a function $a \in L^0(X^+)$ is a \emph{$T^{p,q}$ atom (associated with $B$)} if $a$ is essentially supported in $T(B)$ and if the size estimate
\begin{equation*}
	\norm{a}_{T^{q,q}} \leq \gm(B)^{\gd_{p,q}}
\end{equation*}
holds (recall that $\gd_{p,q} := q^{-1} - p^{-1}$).
A short argument shows that if $a$ is a $T^{p,q}$-atom, then $\norm{a}_{T^{p,q}} \leq 1$.

\begin{thm}[Atomic decomposition]\label{atomic-decomposition}
	Suppose that $X$ is doubling.
	Let $p \in (0,1]$ and $q \in [p,\infty]$.
	Then a function $f \in L^0(X^+)$ is in $T^{p,q}$ if and only if there exists a sequence $(a_k)_{k \in \NN}$ of $T^{p,q}$-atoms and a sequence $(\gl_k)_{k \in \NN} \in \ell^p(\NN)$ such that
	\begin{equation}\label{atomic-decompn}
		 f = \sum_{k \in \NN} \gl_k a_k
	\end{equation}
	with convergence in $T^{p,q}$.
	Furthermore, we have
	\begin{equation*}
		\norm{f}_{T^{p,q}} \simeq \inf \norm{\gl_k}_{\ell^p(\NN)},
	\end{equation*}
	where the infimum is taken over all decompositions of the form \eqref{atomic-decompn}.
\end{thm}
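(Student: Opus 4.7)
The ``if'' direction is quick. Since $p \leq q$ and $p \leq 1$, combining the (quasi-)subadditivity of $\mc{A}^q$ (Minkowski when $q \geq 1$, or $|a+b|^q \leq |a|^q + |b|^q$ when $q \leq 1$) with the $p$-subadditivity of $\norm{\cdot}_{L^p}^p$ shows that $\norm{\cdot}_{T^{p,q}}$ is a $p$-quasi-norm:
\[
\norm{f+g}_{T^{p,q}}^p \leq \norm{f}_{T^{p,q}}^p + \norm{g}_{T^{p,q}}^p.
\]
Because each $T^{p,q}$-atom has quasi-norm at most $1$ and $T^{p,q}$ is complete by Proposition \ref{density-completeness}, any $(\gl_k)_{k \in \NN} \in \ell^p(\NN)$ and any sequence of atoms $(a_k)$ produce a series $f = \sum_k \gl_k a_k$ that converges in $T^{p,q}$ with $\norm{f}_{T^{p,q}} \lesssim \norm{\gl_k}_{\ell^p}$.

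For the ``only if'' direction I would adapt the Coifman--Meyer--Stein construction to the doubling metric setting. Given $f \in T^{p,q}$, set $O_k := \{x \in X : \mc{A}^q f(x) > 2^k\}$ for $k \in \ZZ$; each $O_k$ is open by Lemma \ref{ACsemicontinuity}, and a layer-cake computation yields $\norm{\mc{A}^q f}_{L^p}^p \simeq \sum_k 2^{kp} \gm(O_k)$. Fix $\gg \in (0,1)$ depending on the doubling constant and define the fattened sets $O_k^* := \{\mc{M}\mb{1}_{O_k} > \gg\}$, where $\mc{M}$ is the uncentred Hardy--Littlewood maximal operator on $X$; the weak-type $(1,1)$ bound gives $\gm(O_k^*) \lesssim \gm(O_k)$. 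The central geometric ingredient is a \emph{tent-to-cone} lemma: if $g \in L^0(X^+)$ is essentially supported in $X^+ \sm T(O_{k+1}^*)$, then a Fubini argument exploiting the fact that $\gm(B(y,t) \cap (O_{k+1}^*)^c) \gtrsim \gm(B(y,t))$ whenever $(y,t) \notin T(O_{k+1}^*)$ yields
\[
\iint_{X^+} |g(y,t)|^q \, d\gm(y)\, \frac{dt}{t} \lesssim \int_{X \sm O_{k+1}^*} \mc{A}^q g(x)^q \, d\gm(x),
\]
with the obvious modification when $q = \infty$.

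Using the doubling assumption, I would next fix a Whitney-type decomposition of each $O_k^*$ into a countable, boundedly overlapping family of balls $\{B_{k,j}\}_j$ whose controlled dilates still lie inside $O_k^*$. Splitting the annular region $T(O_k^*) \sm T(O_{k+1}^*)$ according to this cover produces measurable sets $\gD_{k,j} \subset T(B_{k,j})$ whose characteristic functions partition the annular region. Setting $\gl_{k,j} := 2^{k+1} \gm(B_{k,j})^{1/p}$ and $a_{k,j} := \gl_{k,j}^{-1} f \mb{1}_{\gD_{k,j}}$, the tent-to-cone lemma applied to $f\mb{1}_{\gD_{k,j}}$, combined with the pointwise bound $\mc{A}^q f \leq 2^{k+1}$ off $O_{k+1}^*$, yields $\norm{a_{k,j}}_{T^{q,q}} \leq \gm(B_{k,j})^{\gd_{p,q}}$, so that $a_{k,j}$ is a $T^{p,q}$-atom associated with $B_{k,j}$. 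Summing,
\[
\sum_{k,j} |\gl_{k,j}|^p \lesssim \sum_k 2^{kp} \gm(O_k^*) \lesssim \sum_k 2^{kp} \gm(O_k) \simeq \norm{f}_{T^{p,q}}^p,
\]
and convergence of $\sum_{k,j} \gl_{k,j} a_{k,j} = f$ in $T^{p,q}$ then follows from the $p$-triangle inequality of the first paragraph, together with the fact that the $\gD_{k,j}$ exhaust the essential support of $f$ up to a null set (as $\mc{A}^q f(x) = 0$ implies $f$ vanishes on $\gG(x)$).

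The main obstacle will be the tent-to-cone lemma: this is the one step where doubling enters essentially, and the endpoint $q = \infty$ requires a structurally parallel but formally separate argument in which the $L^q$ cone integral is replaced by an essential supremum. Once this lemma is in place, the Whitney construction in the doubling metric space, the layer-cake identity, and the summation reduce to routine bookkeeping.
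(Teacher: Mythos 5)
Your outline for $q<\infty$ is essentially the Coifman--Meyer--Stein scheme as adapted by Russ to doubling metric spaces, which is exactly what the paper invokes for that range. The interesting part of the paper's own proof, however, is the endpoint $q=\infty$, which you relegate to a parenthetical remark, and it is precisely there that the structure you describe does \emph{not} carry over cleanly.

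Two specific issues. First, your ``tent-to-cone lemma'' as stated is slightly off: from $(y,t)\notin T(O^*_{k+1})$ one gets a point $z\in B(y,t)$ with $\mc{M}\mb{1}_{O_{k+1}}(z)\le\gg$, which (via doubling and a small $\gg$) gives $\gm(B(y,t)\cap O_{k+1}^c)\gtrsim V(y,t)$, \emph{not} $\gm(B(y,t)\cap(O^*_{k+1})^c)\gtrsim V(y,t)$ as you wrote; correspondingly the right-hand integral should be over $X\sm O_{k+1}$, not $X\sm O^*_{k+1}$. This is compatible with the pointwise bound $\mc{A}^q f\le 2^{k+1}$, which holds on $O_{k+1}^c$, so the argument survives, but the statement as written is not what the Fubini computation proves.

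Second, and more substantively: for $q=\infty$ the paper does \emph{not} use the dilated sets $O^*$ or any Fubini argument at all. Since $\norm{a}_{T^{\infty,\infty}}$ is simply an $L^\infty$ bound, there is nothing to ``localise'' in the cone integral; instead the paper decomposes $f=\sum_k\mb{1}_{T(O^k)\sm T(O^{k+1})}f$, applies the Coifman--Weiss/Mac\'ias--Segovia partition of unity (Lemma \ref{lem:russ}) to each $O^k$ in the $x$-variable only, and verifies directly (via a triangle-inequality computation) that each resulting piece is essentially supported in a tent $T(B_i^k)$ over a fixed dilate $B_i^k=B(x_i^k,14r(x_i^k))$. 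The one genuine subtlety the paper has to address, and which your sketch does not anticipate, is proving that $|f|\le 2^{k+1}$ almost everywhere on $T(O^{k+1})^c$: since $T(O^{k+1})^c$ is an uncountable union of cones $\gG(x)$ with $x\notin O^{k+1}$, one must extract a countable subcover (using that $X^+$ is Lindel\"of) before the a.e.\ bound on each cone can be assembled into an a.e.\ bound on the union. So your plan, which treats $q=\infty$ as an ``obvious modification'' of the $q<\infty$ Fubini argument, overlooks both that the dilated sets are superfluous there and that a separate measure-theoretic argument is needed in their place. The finite-measure case $\gm(X)<\infty$ (where some $O^k$ may equal $X$, making the covering lemma inapplicable) is also not addressed.

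Your construction of the pieces $\gD_{k,j}\subset T(B_{k,j})$ also glosses over the geometric verification that being in a Whitney ball of $O_k^*$ and in $T(O_k^*)$ forces membership in the tent over a fixed dilate; this is exactly the $d(y,(B_i^k)^c)\ge t$ computation the paper spells out, and it should be made explicit since it is where the dilation factor $14$ (or whatever constant your scheme uses) is determined.
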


This is proven by Russ when $q = 2$ \cite{eR07}, and the same proof works for general $q \in [p,\infty)$.
For $q = \infty$ we need to combine the original argument of Coifman, Meyer, and Stein \cite[Proposition 2]{CMS85} with that of Russ.
We defer this to Section \ref{defprf:atomic}.

\subsection{Weighted tent spaces: definitions, duality, and atoms}\label{section:wts}

We continue to suppose that $(X,d,\gm)$ is a nondegenerate metric measure space, and again we make no further assumptions without explicit mention.

For each $s \in \RR$, we can define an operator $V^s$ on $L^0(X^+)$ by
\begin{equation*}
	(V^s f)(x,t) := V(x,t)^s f(x,t)
\end{equation*}
for all $(x,t) \in X^+$.
Note that for $r,s \in \RR$ the equality $V^r V^s = V^{r+s}$ holds, and also that $V^0$ is the identity operator.
Using these operators we define modified tent spaces, which we call \emph{weighted tent spaces}, as follows.

\begin{dfn}
	For $p \in (0,\infty)$, $q \in (0,\infty]$, and $s \in \RR$, the \emph{weighted tent space} $T^{p,q}_s$ is the set
	\begin{equation*}
		T^{p,q}_s := \{f \in L^0(X^+) : V^{-s} f \in T^{p,q}\}
	\end{equation*}
	equipped with the quasi-norm
	\begin{equation*}
		\norm{f}_{T^{p,q}_s} := \norm{V^{-s} f}_{T^{p,q}}.
	\end{equation*}
	For $q \neq \infty$, and with an additional parameter $\ga \in \RR$, we define $T^{\infty,q}_{s;\ga}$ by the quasi-norm
	\begin{equation*}
		\norm{f}_{T^{\infty,q}_{s;\ga}} := \norm{\mc{C}_\ga^q(V^{-s} f)}_{L^\infty(X)}.
	\end{equation*}
	Finally, we define $T^{\infty,\infty}_{s;\ga}$ by the norm
	\begin{equation*}
		\norm{f}_{T^{\infty,\infty}_{s;\ga}} := \sup_{B \subset X} \frac{1}{\gm(B)^{1+\ga}}\norm{V^s g}_{L^\infty(T(B))},
	\end{equation*}
	where the supremum is taken over all balls $B \subset X$.
	Note that $T^{\infty,q}_{0;0} = T^{\infty,q}$.
	We write $T^{\infty,q}_s := T^{\infty,q}_{s;0}$.
\end{dfn}

\begin{rmk}
	The weighted tent space quasi-norms of Hofmann, Mayboroda, and McIntosh \cite[\textsection 8.3]{HMM11} (with $p \neq \infty$) and Huang \cite{yH16} (including $p = \infty$ with $\ga = 0$) are given by
	\begin{equation}\label{twts}
		\norm{f}_{T^{p,q}_s(\RR^n)} := \norm{(y,t) \mapsto t^{-s} f(y,t)}_{T^{p,q}(\RR^n)},
	\end{equation}
	which are equivalent to those of our spaces $T^{p,q}_{s/n}(\RR^n)$.
	In general, when $X$ is unbounded and AD-regular of dimension $n$, the quasi-norm in \eqref{twts} (with $X$ replacing $\RR^n$) is equivalent to that of our $T^{p,q}_{s/n}$.
	We have chosen the convention of weighting with ball volumes, rather than with the variable $t$, because this leads to more geometrically intrinsic function spaces and supports embedding theorems under weaker assumptions.
\end{rmk}

For all $r,s \in \RR$, the operator $V^r$ is an isometry from $T^{p,q}_s$ to $T^{p,q}_{s + r}$.
The operator $V^{-r}$ is also an isometry, now from $T^{p,q}_{s + r}$ to $T^{p,q}_s$, and so for fixed $p$ and $q$ the weighted tent spaces $T^{p,q}_s$ are isometrically isomorphic for all $s \in \RR$.
Thus by Proposition \ref{density-completeness}, when $X$ is doubling, the spaces $T^{p,q}_s$ are all complete.

Recall the $L^2(X^+)$ inner product \eqref{dualitypairing}, which induces a duality pairing between $T^{p,q}$ and $T^{p^\prime, q^\prime}$ for appropriate $p$ and $q$ when $X$ is doubling.
For all $s \in \RR$ and all $f,g \in L^2(X^+)$ we have the equality
\begin{equation}\label{Vadjoint}
	\langle f,g \rangle = \langle V^{-s} f, V^{s} g \rangle,
\end{equation}
which yields the following duality result.

\begin{prop}\label{wtduality}
	Suppose that $X$ is doubling, $p \in [1,\infty)$, $q \in (1,\infty)$, and $s \in \RR$.
	Then the $L^2(X^+)$ inner product \eqref{dualitypairing} identifies the dual of $T^{p,q}_s$ with $T^{p^\prime, q^\prime}_{-s}$.
\end{prop}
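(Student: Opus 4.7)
The plan is to reduce to the unweighted duality statement (Proposition \ref{wt-duality}) by exploiting the isometric isomorphism $V^{-s}\colon T^{p,q}_s \to T^{p,q}$ and the adjoint-type identity \eqref{Vadjoint}. Everything in sight commutes formally, so the proof is essentially diagrammatic.

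First I would check that the pairing is well-defined and gives the upper bound. If $g \in T^{p',q'}_{-s}$, then by definition $V^{s}g \in T^{p',q'}$, and for $f \in T^{p,q}_s$ we have $V^{-s}f \in T^{p,q}$. Applying \eqref{Vadjoint} gives
\begin{equation*}
    \langle f, g \rangle = \langle V^{-s}f, V^s g\rangle,
\end{equation*}
and the unweighted duality (Proposition \ref{wt-duality}) bounds this by $\|V^{-s}f\|_{T^{p,q}}\|V^s g\|_{T^{p',q'}} = \|f\|_{T^{p,q}_s}\|g\|_{T^{p',q'}_{-s}}$, proving the pairing makes sense and that the embedding $T^{p',q'}_{-s} \hookrightarrow (T^{p,q}_s)^*$ is a contraction. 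Since $V^{-s}$ is an isometry and $V^s$ is its inverse, the same computation with $f = V^s h$ for $h$ an $L^2_c$-witness to the norm of $V^s g$ in $T^{p',q'}$ gives the reverse inequality, so the embedding is isometric.

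Next I would show surjectivity. Given $\varphi \in (T^{p,q}_s)^*$, define $\psi \colon T^{p,q} \to \CC$ by $\psi(h) := \varphi(V^s h)$. Since $V^s \colon T^{p,q} \to T^{p,q}_s$ is an isometric isomorphism, $\psi$ is bounded with $\|\psi\| = \|\varphi\|$. By Proposition \ref{wt-duality}, there exists a unique $g_0 \in T^{p',q'}$ representing $\psi$, i.e.\ $\psi(h) = \langle h, g_0 \rangle$ for all $h \in T^{p,q}$. Set $g := V^{-s}g_0 \in T^{p',q'}_{-s}$. Then for any $f \in T^{p,q}_s$, writing $h := V^{-s}f \in T^{p,q}$ and using \eqref{Vadjoint},
\begin{equation*}
    \varphi(f) = \varphi(V^s h) = \psi(h) = \langle V^{-s}f, g_0 \rangle = \langle V^{-s}f, V^s g \rangle = \langle f, g \rangle,
\end{equation*}
which is the required representation.

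There is no real obstacle: both $V^s$ and $V^{-s}$ are pointwise multiplication by almost-everywhere positive finite weights, so they preserve measurability and null sets, and the identity \eqref{Vadjoint} is just a rearrangement of scalars inside the integral. The only care needed is in bookkeeping the sign of $s$ in the indices of the weighted tent spaces, which is why I would write out both directions explicitly rather than appealing to abstract nonsense.
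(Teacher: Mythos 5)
Your proof takes exactly the same route as the paper's: reduce to the unweighted duality of Proposition \ref{wt-duality} via the isometries $V^{\pm s}$ and the adjoint identity \eqref{Vadjoint}, first to bound the pairing and then, by precomposing a given functional with $V^s$, to produce the representing function $g = V^{-s}\tilde g$. One small caution: Proposition \ref{wt-duality} gives only a norm equivalence (the bound is $\lesssim$, with implicit constants from the tent-space machinery), so your claim that the embedding $T^{p',q'}_{-s} \hookrightarrow (T^{p,q}_s)^*$ is a contraction and ultimately isometric is an over-statement — what one gets, and all that the proposition asserts, is an identification with equivalent norms.
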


\begin{proof}
	If $f \in T^{p,q}_s$ and $g \in T^{p^\prime, q^\prime}_{-s}$, then we have $V^{-s} f \in T^{p,q}$ and $V^s g \in T^{p^\prime, q^\prime}$, so by Proposition \ref{wt-duality} and \eqref{Vadjoint} we have
	\begin{equation*}
		|\langle f,g \rangle|  \lesssim \norm{V^{-s} f}_{T^{p,q}} \norm{V^{s} g}_{T^{p^\prime,q^\prime}} = \norm{f}_{T^{p,q}_s} \norm{g}_{T^{p^\prime, q^\prime}_{-s}}.
	\end{equation*}
	
	Conversely, if $\gf \in (T^{p,q}_s)^\prime$, then the map $\td{f} \mapsto \gf(V^s \td{f})$ determines a bounded linear functional on $T^{p,q}$ with norm dominated by $\norm{\gf}$.
	Hence by Proposition \ref{wt-duality} there exists a function $\td{g} \in T^{p^\prime,q^\prime}$ with $\norm{\td{g}}_{T^{p^\prime,q^\prime}} \lesssim \norm{\gf}$  such that
		\begin{equation*}
			\gf(f) = \gf(V^s (V^{-s} f)) = \langle V^{-s} f, \td{g}\rangle = \langle f, V^{-s} \td{g} \rangle
		\end{equation*}
	for all $f \in T^{p,q}_s$.
	Since
	\begin{equation*}
		\norm{V^{-s} \td{g}}_{T^{p^\prime,q^\prime}_{-s}} = \norm{\td{g}}_{T^{p^\prime,q^\prime}} \lesssim \norm{\gf},
	\end{equation*}
	we are done.
\end{proof}
 
There is also a duality result for $p < 1$ which incorporates the spaces $T^{\infty,q}_{s;\ga}$ with $\ga > 0$.
Before we can prove it, we need to discuss atomic decompositions.

Suppose that $p \in (0,1]$, $q \in [p,\infty]$, $s \in \RR$, and $B \subset X$ is a ball.
We say that a function $a \in L^0(X^+)$ is a \emph{$T^{p,q}_s$-atom (associated with $B$)} if $V^{-s} a$ is a $T^{p,q}$-atom.
This is equivalent to demanding that $a$ is essentially supported in $T(B)$ and that
\begin{equation*}
	\norm{a}_{T^{q,q}_s} \leq \gm(B)^{\gd_{p,q}}.
\end{equation*}
The atomic decomposition theorem for unweighted tent spaces (Theorem \ref{atomic-decomposition}) immediately implies its weighted counterpart.

\begin{prop}[Atomic decomposition for weighted tent spaces]
	Suppose that $X$ is doubling.
	Let $p \in (0,1]$, $q \in [p,\infty]$, and $s \in \RR$.
	Then a function $f \in L^0(X^+)$ is in $T^{p,q}_s$ if and only if there exists a sequence $(a_k)_{k \in \NN}$ of $T^{p,q}_s$-atoms and a sequence $(\gl_k)_{k \in \NN} \in \ell^p(\NN)$ such that
	\begin{equation}\label{weighted-atomic-decompn}
		 f = \sum_{k \in \NN} \gl_k a_k
	\end{equation}
	with convergence in $T^{p,q}_s$.
	Furthermore, we have
	\begin{equation*}
		\norm{f}_{T_s^{p,q}} \simeq \inf \norm{\gl_k}_{\ell^p(\NN)},
	\end{equation*}
	where the infimum is taken over all decompositions of the form \eqref{weighted-atomic-decompn}.
\end{prop}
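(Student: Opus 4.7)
The plan is to transfer the unweighted atomic decomposition (Theorem \ref{atomic-decomposition}) across the isometric isomorphism $V^{-s} \colon T^{p,q}_s \to T^{p,q}$. Recall from the discussion preceding Proposition \ref{wtduality} that this operator is an isometry, with inverse $V^s \colon T^{p,q} \to T^{p,q}_s$, and that by definition $a$ is a $T^{p,q}_s$-atom if and only if $V^{-s} a$ is a $T^{p,q}$-atom. These two observations are what make the proof essentially automatic.

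For the forward implication, suppose $f \in T^{p,q}_s$. Then $V^{-s} f \in T^{p,q}$, so by Theorem \ref{atomic-decomposition} there exist $T^{p,q}$-atoms $(b_k)_{k \in \NN}$ and a sequence $(\gl_k)_{k \in \NN} \in \ell^p(\NN)$ with
\begin{equation*}
	V^{-s} f = \sum_{k \in \NN} \gl_k b_k \quad \text{in } T^{p,q},
\end{equation*}
and with $\norm{V^{-s} f}_{T^{p,q}} \simeq \inf \norm{\gl_k}_{\ell^p(\NN)}$. Setting $a_k := V^s b_k$, each $a_k$ is a $T^{p,q}_s$-atom by definition. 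Applying the isometry $V^s$ term by term — which is justified because $V^s$ is continuous on the whole scale — and using that $V^s V^{-s}$ is the identity, yields \eqref{weighted-atomic-decompn} with convergence in $T^{p,q}_s$, since
\begin{equation*}
	\Big\| f - \sum_{k=1}^N \gl_k a_k \Big\|_{T^{p,q}_s} = \Big\| V^{-s} f - \sum_{k=1}^N \gl_k b_k \Big\|_{T^{p,q}} \to 0.
\end{equation*}

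For the reverse implication, suppose $f = \sum_k \gl_k a_k$ with $T^{p,q}_s$-atoms $a_k$ and $(\gl_k) \in \ell^p(\NN)$, with convergence (a priori) understood in $L^0(X^+)$ or any reasonable sense. Applying $V^{-s}$ gives $V^{-s} f = \sum_k \gl_k (V^{-s} a_k)$, a decomposition into $T^{p,q}$-atoms, so Theorem \ref{atomic-decomposition} places $V^{-s} f$ in $T^{p,q}$, equivalently $f \in T^{p,q}_s$, with $\norm{f}_{T^{p,q}_s} = \norm{V^{-s} f}_{T^{p,q}} \lesssim \norm{\gl_k}_{\ell^p(\NN)}$. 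Combining the two bounds yields $\norm{f}_{T^{p,q}_s} \simeq \inf \norm{\gl_k}_{\ell^p(\NN)}$ over all decompositions of the form \eqref{weighted-atomic-decompn}.

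There is no real obstacle here; the only point requiring any care is the bijection between atoms under $V^{\pm s}$, which holds by definition, and the fact that norm convergence transfers under an isometry, which is trivial. Since $X$ is doubling, the completeness of $T^{p,q}_s$ (noted after Proposition \ref{density-completeness} applied through the isometry) ensures that the atomic series genuinely converges in the space, closing the argument.
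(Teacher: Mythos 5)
Your proof is correct and takes exactly the approach the paper intends: the paper's own "proof" consists solely of the remark that Theorem \ref{atomic-decomposition} "immediately implies its weighted counterpart," with the transfer via the isometry $V^{-s}$ and the definitional bijection of atoms left implicit. You have simply written out what the paper treats as obvious, and the argument is sound.
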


Using this, we can prove the following duality result for $p < 1$.

\begin{thm}\label{duality-qb}
	Suppose that $X$ is doubling, $p \in (0,1)$, $q \in [1,\infty)$, and $s \in \RR$.
	Then the $L^2(X^+)$ inner product \eqref{dualitypairing} identifies the dual of $T^{p,q}_s$ with $T^{\infty,q^\prime}_{-s;\gd_{1,p}}$.
\end{thm}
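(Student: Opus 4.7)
The plan is to follow the standard pattern: use the atomic decomposition (now available for weighted tent spaces) to get the easy direction $T^{\infty,q'}_{-s;\gd_{1,p}} \hookrightarrow (T^{p,q}_s)^\prime$, and use the density of cylindrically supported $L^q$-functions together with ordinary $L^q$--$L^{q'}$ duality on cylinders to get the reverse direction.

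For the easy direction, I fix $g \in T^{\infty,q'}_{-s;\gd_{1,p}}$ and show that $f \mapsto \langle f,g \rangle$ defines a bounded linear functional on $T^{p,q}_s$. The heart of the matter is the estimate for a single $T^{p,q}_s$-atom $a$ associated with a ball $B$. Writing $|a| \cdot |g| = |V^{-s}a| \cdot |V^s g|$ and applying Hölder with exponents $q,q'$ on $T(B)$, I obtain
\begin{equation*}
 |\langle a,g \rangle|
 \leq \norm{V^{-s}a}_{L^q(T(B))} \norm{V^s g}_{L^{q'}(T(B))}
 \leq \gm(B)^{\gd_{p,q}} \cdot \gm(B)^{1/q'+\gd_{1,p}} \norm{g}_{T^{\infty,q'}_{-s;\gd_{1,p}}},
\end{equation*}
and the exponents telescope: $\gd_{p,q}+1/q'+\gd_{1,p}=0$. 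Decomposing $f=\sum_k \gl_k a_k$ as in the weighted atomic decomposition, using that $\norm{(\gl_k)}_{\ell^1}\leq \norm{(\gl_k)}_{\ell^p}$ for $p\leq 1$, and taking the infimum over decompositions, yields $|\langle f,g\rangle|\lesssim \norm{f}_{T^{p,q}_s}\norm{g}_{T^{\infty,q'}_{-s;\gd_{1,p}}}$.

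For the converse, let $\gf\in(T^{p,q}_s)^\prime$. First I check that for any cylinder $C=B(x_0,R)\times(a,b)$ the inclusion $L^q(C,d\gm\, dt/t)\hookrightarrow T^{p,q}_s$ is continuous: on $C$, $V(y,t)^{-s}$ is bounded by doubling, so $|V^{-s}f|\lesssim |f|$ pointwise; the function $\mc{A}^q(V^{-s}f)$ is supported in $B(x_0,R+b)$; and a crude $L^q$-to-$L^p$ Hölder inequality on this ball finishes the job. Restricting $\gf$ to $L^q(C)$ and invoking standard $L^q$--$L^{q'}$ duality produces a representing function $g_C\in L^{q'}(C)$; exhausting $X^+$ by cylinders and using consistency on overlaps yields $g\in L^{q'}_{\loc}(X^+)$ with $\gf(f)=\langle f,g\rangle$ for all $f\in L^q_c(X^+)$, which is dense in $T^{p,q}_s$ by Proposition \ref{density-completeness} (transported via the isometry $V^s$).

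It remains to bound $\norm{g}_{T^{\infty,q'}_{-s;\gd_{1,p}}}\lesssim\norm{\gf}$. I fix a ball $B\subset X$ and test against $h\in L^q(T(B),d\gm\,dt/t)$ with $\norm{h}_{L^q}=1$. Truncating $h$ to $t>\gev$ produces $h_\gev\in L^q_c(X^+)$, and the key point is the norm estimate $\norm{V^s h_\gev}_{T^{p,q}_s}=\norm{h_\gev}_{T^{p,q}}\leq \gm(B)^{-\gd_{p,q}}\norm{h}_{L^q(T(B))}$; this uses that $\mc{A}^q h$ is supported in $B$ (since $(y,t)\in T(B)\cap\gG(x)$ forces $x\in B$) and Hölder in the ambient $L^p(B)$. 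Combining,
\begin{equation*}
 \left|\iint_{T(B)} h\cdot V^s g \, d\gm\,\frac{dt}{t}\right|
 =|\gf(V^s h_\gev)|+o(1)
 \leq \norm{\gf}\gm(B)^{-\gd_{p,q}},
\end{equation*}
and taking the supremum over $h$ gives $\norm{V^s g}_{L^{q'}(T(B))}\leq\norm{\gf}\gm(B)^{-\gd_{p,q}}$, which after dividing by $\gm(B)^{1+\gd_{1,p}}$ and using $-\gd_{p,q}-1/q'-\gd_{1,p}=0$ is exactly the desired bound. The main obstacle is the last step: producing $g$ globally and showing the Carleson-type bound on every ball simultaneously; the choice of weight exponent $\gd_{1,p}$ in the target space is forced precisely by the telescoping of exponents that occurs in both the atomic estimate and the final Carleson estimate.
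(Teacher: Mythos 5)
Your proof is correct and follows essentially the same route as the paper's: the forward direction via the weighted atomic decomposition together with H\"older on $T(B)$ and telescoping exponents, and the converse by producing $g \in L^{q'}_{\loc}(X^+)$ from cylinder restrictions of $\gf$ (a step the paper delegates to the reference \cite[Proof of Proposition 3.10]{aA14}) and then bounding $\norm{V^s g}_{L^{q'}(T(B))}$ by duality using the estimate $\norm{V^s h}_{T^{p,q}_s} \leq \gm(B)^{\gd_{q,p}}\norm{h}_{L^q(T(B))}$ for $h$ supported in $T(B)$. The only differences are cosmetic: you spell out the cylinder-exhaustion construction and the $\gev$-truncation that the paper leaves implicit.
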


\begin{proof}
	First suppose that $a$ is a $T^{p,q}_s$-atom associated with a ball $B \subset X$, and that $g \in T^{\infty,q^\prime}_{-s,\gd_{1,p}}$.
	Then we have
	\begin{align*}
		|\langle a,g \rangle|
		&\leq \iint_{T(B)} |V^{-s} a(y,t)| |V^s g(y,t)| \, d\gm(y) \, \frac{dt}{t} \\
		&\leq \norm{a}_{T^{q,q}_s} \gm(B)^{1/q^\prime} \gm(B)^{\gd_{1,p}} \norm{g}_{T^{\infty,q^\prime}_{-s,\gd_{1,p}}} \\
		&\leq \gm(B)^{\gd_{p,q} + \gd_{q,1} + \gd_{1,p}} \norm{g}_{T^{\infty,q^\prime}_{-s,\gd_{1,p}}} \\
		&= \norm{g}_{T^{\infty,q^\prime}_{-s,\gd_{1,p}}}.
	\end{align*}
	For general $f \in T^{p,q}_s$ we write $f$ as a sum of $T^{p,q}_s$-atoms as in \eqref{weighted-atomic-decompn} and get
	\begin{equation*}
		|\langle f,g \rangle| \leq \norm{g}_{T^{\infty,q^\prime}_{-s,\gd_{1,p}}} \norm{\gl}_{\ell^1} \leq \norm{g}_{T^{\infty,q^\prime}_{-s,\gd_{1,p}}} \norm{\gl}_{\ell^p}
	\end{equation*}
	using that $p < 1$.
	Taking the infimum over all atomic decompositions completes the argument.
	
	Conversely, suppose that $\gf \in (T^{p,q}_s)^\prime$.
	Exactly as in the classical duality proof (see \cite[Proof of Proposition 3.10]{aA14}), using the doubling assumption, there exists a function $g \in L^{q^\prime}_\text{loc}(X^+)$ such that
	\begin{equation*}
		\gf(f) = \langle f,g \rangle
	\end{equation*}
	for all $f \in T^{p,q;c}_s$.
	To show that $g$ is in $T^{\infty,q^\prime}_{-s,\gd_{1,p}}$, we estimate $\norm{V^{s} g}_{L^{q^\prime}(T(B))}$ for each ball $B \subset X$ by duality:
	\begin{align*}
		\norm{V^{s} g}_{L^{q^\prime}(T(B))}
		&= \sup_{f \in L^q(T(B))} |\langle f,V^{s} g \rangle| \norm{f}_{L^q(T(B))}^{-1} \\
		&= \sup_{f \in L_c^q(T(B))} |\langle V^{s} f, g \rangle| \norm{f}_{L^q(T(B))}^{-1}.
	\end{align*}
	H\"older's inequality implies that
	\begin{equation*}
		\norm{V^s f}_{T^{p,q}_s} \leq \gm(B)^{\gd_{q,p}} \norm{f}_{L^q(T(B))}
	\end{equation*}
	when $f$ is essentially supported in $T(B)$, so we have
	\begin{align*}
		\norm{V^s g}_{L^{q^\prime}(T(B))} \leq \gm(B)^{\gd_{q,p}} \norm{\gf}_{(T^{p,q}_s)^\prime},
	\end{align*}
	and therefore
	\begin{align*}
		\norm{g}_{T^{\infty,q^\prime}_{-s,\gd_{1,p}}} &= \sup_{B \subset X} \gm(B)^{\gd_{p,1} - (1/q^\prime)} \norm{V^s g}_{L^{q^\prime}(T(B))} \\
		&\leq \norm{\gf}_{(T^{p,q}_s)^\prime} \sup_{B \subset X} \gm(B)^{\gd_{p,1} + \gd_{1,q} + \gd_{q,p}} \\
		&= \norm{\gf}_{(T^{p,q}_s)^\prime},
	\end{align*}
	which completes the proof.
\end{proof}

\begin{rmk}
	Note that $q=1$ is included here, and excluded in the other duality results of this article.
	Generally the spaces $T^{p,q}$ with $p \leq q$ are easier to handle than those with $p > q$.
\end{rmk}

We end this section by detailing a technique, usually referred to as `convex reduction', which is very useful in relating tent spaces to each other.
Suppose $f \in L^0(X^+)$ and $M > 0$.
We define a function $f^M \in L^0(X^+)$ by
\begin{equation*}
	(f^M)(x,t) := |f(x,t)|^M
\end{equation*}
for all $(x,t) \in X^+$.
For all $q \in (0,\infty]$ and $s \in \RR$ we then have
\begin{equation*}
	\mc{A}^q(V^{-s} f^M) = \mc{A}^{Mq}(V^{-s/M} f)^M,
\end{equation*}
and for $\ga \in \RR$ we also have
\begin{equation*}
	\mc{C}^q_\ga (V^{-s} f^M) = \mc{C}^{Mq}_{\ga/M} (V^{-s/M} f)^M.
\end{equation*}
Therefore, for $p \in (0,\infty)$ we have
\begin{align*}
	\norm{f^M}_{T^{p,q}_s} &= \norm{\mc{A}^{Mq}(V^{-s/M} f)^M}_{L^p(X)} \\
	&=\norm{\mc{A}^{Mq}(V^{-s/M} f)}_{L^{Mp}(X)}^{M}\\
	&= \norm{f \mid T^{Mp,Mq}_{s/M}}^{M},
\end{align*}
and likewise for $p = \infty$ and $q < \infty$ we have
\begin{equation*}
	\norm{f^M}_{T^{\infty,q}_{s,\ga}} = \norm{f \mid T^{\infty,Mq}_{s/M,\ga/M}}^M 
\end{equation*}
The case $p = q = \infty$ behaves in the same way:
\begin{equation*}
	\norm{f^M}_{T^{\infty,\infty}_{s}} = \norm{(V^{-s/M} f)^M}_{L^\infty(X^+)}
	= \norm{f}_{T^{\infty,\infty}_{s/M}}^M.
\end{equation*}
These equalities often allow us to deduce properties of $T^{p,q}_s$ from properties of $T^{Mp,Mq}_{s/M}$, and vice versa.
We will use them frequently.

\section{Interpolation and embeddings}

As always, we assume that $(X,d,\gm)$ is a nondegenerate metric measure space.
We will freely use notation and terminology regarding interpolation theory; the uninitiated reader may refer to Bergh and L\"ofstr\"om \cite{BL76}.

\subsection{Complex interpolation}\label{cint}

In this section we will make the following identification of the complex interpolants of weighted tent spaces in the Banach range of exponents.

\begin{thm}\label{cpxinterp}
	Suppose that $X$ is doubling, $p_0,p_1 \in [1,\infty]$ (not both $\infty$), $q_0,q_1 \in (1,\infty)$, $s_0, s_1 \in \RR$, and $\gq \in (0,1)$.
	Then we have the identification
	\begin{equation*}
		[T^{p_0,q_0}_{s_0}, T^{p_1,q_1}_{s_1}]_\gq = T^{p_\gq, q_\gq}_{s_\gq}
	\end{equation*}
	with equivalent norms, where $p_\gq^{-1} = (1-\gq)p_0^{-1} + \gq p_1^{-1}$, $q_\gq^{-1} = (1-\gq)q_0^{-1} + \gq q_1^{-1}$, and $s_\gq = (1-\gq)s_0 + \gq s_1$.
\end{thm}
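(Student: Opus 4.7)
The plan is to reduce to the unweighted complex interpolation identity
\begin{equation*}
[T^{p_0,q_0}, T^{p_1,q_1}]_\gq = T^{p_\gq, q_\gq},
\end{equation*}
which, in the stated parameter range on doubling $X$, is known from Coifman--Meyer--Stein \cite{CMS85} and its metric-measure-space extensions. The only obstruction to applying this directly is that when $s_0 \neq s_1$ no single multiplier identifies the couple $(T^{p_0,q_0}_{s_0}, T^{p_1,q_1}_{s_1})$ with $(T^{p_0,q_0}, T^{p_1,q_1})$; instead I would interpolate through an analytic family of such identifications, in the spirit of Stein's interpolation theorem.

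Set $s(z) := (1-z)s_0 + z s_1$ for $z$ in the closed strip $\overline{S} := \{0 \le \operatorname{Re} z \le 1\}$, and consider the pointwise multipliers $M_z f := V^{-s(z)} f$. Since
\begin{equation*}
|V(y,t)^{-s(j+i\gt)}| = V(y,t)^{-s_j} \quad \text{for } j \in \{0,1\},\ \gt \in \RR,\ (y,t) \in X^+,
\end{equation*}
each $M_{j+i\gt}$ is an isometric isomorphism from $T^{p_j,q_j}_{s_j}$ onto $T^{p_j,q_j}$ with inverse $V^{s(j+i\gt)}$. Given $f \in [T^{p_0,q_0}_{s_0}, T^{p_1,q_1}_{s_1}]_\gq$ and a Calder\'on-admissible analytic extension $F$ with $F(\gq) = f$, I would set $G(z) := M_z F(z)$; the identities $\norm{G(j+i\gt)}_{T^{p_j,q_j}} = \norm{F(j+i\gt)}_{T^{p_j,q_j}_{s_j}}$ then yield, provided $G$ is admissible for the unweighted couple,
\begin{equation*}
V^{-s_\gq} f = G(\gq) \in [T^{p_0,q_0}, T^{p_1,q_1}]_\gq = T^{p_\gq, q_\gq}
\end{equation*}
with the expected norm bound, hence $f \in T^{p_\gq, q_\gq}_{s_\gq}$. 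The reverse inclusion follows by the symmetric construction, starting from an admissible $G$ extending $V^{-s_\gq} f \in T^{p_\gq, q_\gq}$ and setting $F(z) := V^{s(z)} G(z)$; the same identity $|V^{s(j+i\gt)}| = V^{s_j}$ on the boundary lines guarantees that $F$ has the correct endpoint norms.

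The main thing to verify is that $G$ is admissible in Calder\'on's sense. Boundary continuity and uniform boundedness follow from the endpoint isometries, and in the interior $\norm{G(z)}_{T^{p_0,q_0} + T^{p_1,q_1}}$ is controlled via the log-convexity of the multiplier $|V(y,t)^{-s(z)}| = V(y,t)^{-\operatorname{Re} s(z)}$ in $\operatorname{Re} z$. Analyticity in the open strip I would verify weakly by testing against $g \in L^2_c(X^+)$: on the cylindrical support of $g$ the function $(y,t,z) \mapsto V(y,t)^{-s(z)}$ is bounded uniformly on compacta of $\overline S$, so
\begin{equation*}
\langle G(z), g \rangle = \iint_{X^+} V(y,t)^{-s(z)} F(z)(y,t) \overline{g(y,t)} \, d\gm(y)\, \frac{dt}{t}
\end{equation*}
inherits the analyticity of $F$. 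The mild non-reflexivity that appears when one of the $p_j$ equals $\infty$ does not obstruct this verification, since Proposition \ref{wt-duality} still supplies enough $L^2_c$ functionals to separate points in $T^{p_0,q_0} + T^{p_1,q_1}$.
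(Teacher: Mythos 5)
Your proposal takes a genuinely different route from the paper. The paper proves the easy containment
$[T^{p_0,q_0}_{s_0}, T^{p_1,q_1}_{s_1}]_\gq \subset T^{p_\gq, q_\gq}_{s_\gq}$
via the Harboure--Torrea--Viviani isometry $H f(x) = \mb{1}_{\gG(x)} f$ into Bochner spaces $L^p(X : L^q_s(X^+))$, then invokes the classical complex interpolation theorems for Banach-valued $L^p$ and weighted $L^q$; the reverse containment comes from the duality theorem for complex interpolation together with Proposition \ref{wtduality}, which is precisely why the Banach range $q_0, q_1 \in (1,\infty)$ is needed. You instead assume the \emph{unweighted} identification $[T^{p_0,q_0}, T^{p_1,q_1}]_\gq = T^{p_\gq, q_\gq}$ and transfer it to the weighted couple by a Stein-type interpolation of the analytic multiplier family $M_z = V^{-s(z)}$. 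This is a legitimate alternative: the paper's argument is self-contained and threads the $p_j \in \{1,\infty\}$ endpoints via its own extension technique, while yours is conceptually slicker once the unweighted case is granted. One imprecision: the full unweighted identification in the parameter range needed (distinct $q_0,q_1 \in (1,\infty)$, $p_j \in [1,\infty]$, doubling $X$) is not in Coifman--Meyer--Stein, who treat only $q=2$ on $\RR^n$; you would need to cite \cite{aA14} or Bernal \cite{aB92} for the general case.

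There is, however, a genuine gap in the admissibility check. Given a general Calder\'on-admissible $F$ for the couple $(T^{p_0,q_0}_{s_0}, T^{p_1,q_1}_{s_1})$, the function $G(z) := V^{-s(z)} F(z)$ need not take values in $T^{p_0,q_0} + T^{p_1,q_1}$ for $z$ in the interior of the strip. Writing $F(z) = F_0(z) + F_1(z)$ with $F_j(z) \in T^{p_j,q_j}_{s_j}$, one has $V^{-s(z)} F_0(z) = V^{\,-\operatorname{Re}z\,(s_1-s_0)}\cdot V^{i\operatorname{Im}(\cdots)}\cdot (V^{-s_0} F_0(z))$, and the real-exponent factor $V^{-\operatorname{Re}z\,(s_1-s_0)}$ is an unbounded multiplier on $X^+$ when $s_0 \neq s_1$; ``log-convexity of the multiplier'' does not rescue this, since the sum space $T^{p_0,q_0} + T^{p_1,q_1}$ imposes no weight at all. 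The standard repair is to compute the interpolation norm only over the dense subclass of $F$ of Calder\'on's special form $F(z) = \sum_{k=1}^N a_k(z) f_k$ with $a_k$ scalar analytic (with the usual Gaussian decay) and $f_k$ cylindrically supported in $L^q_c(X^+) \subset T^{p_0,q_0}_{s_0} \cap T^{p_1,q_1}_{s_1}$; on a fixed cylinder $V$ is bounded above and below (doubling), so each $V^{-s(z)} f_k$ is cylindrically supported with norm locally bounded in $z$, the weak analyticity you describe against $L^2_c$ upgrades to strong analyticity by local boundedness, and the infimum over this class gives the same interpolation norm. The same restriction is needed in the reverse direction when you set $F(z) := V^{s(z)} G(z)$. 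Your proposal should state this restriction explicitly; as written, the step ``provided $G$ is admissible'' is exactly where the argument can fail.
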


\begin{rmk}\label{cpxint-refs}
In the case where $X = \RR^n$ with the Euclidean distance and Lebesgue measure, this result (with $p_0,p_1 < 1$ permitted) is due to Hofmann, Mayboroda, and McIntosh \cite[Lemma 8.23]{HMM11}.
A more general result, still with $X = \RR^n$, is proven by Huang \cite[Theorem 4.3]{yH16} with $q_0,q_1 = \infty$ also permitted, and with Whitney averages incorporated.
Both of these results are proven by means of factorisation theorems for weighted tent spaces (with Whitney averages in the second case), and by invoking an extension of Calder\'on's product formula to quasi-Banach spaces due to Kalton and Mitrea \cite[Theorem 3.4]{KM98}.
We have chosen to stay in the Banach range with $1 < q_0,q_1 < \infty$ for now, as establishing a general factorisation result would take us too far afield.
\end{rmk}

Note that if $p_0 = \infty$ (say) then we are implicitly considering $T^{\infty,q_0}_{s_0;\ga}$ with $\ga = 0$; interpolation of spaces with $\ga \neq 0$ is not covered by this theorem.
This is because the method of proof uses duality, and to realise $T^{\infty,q_0}_{s_0;\ga}$ with $\ga \neq 0$ as a dual space we would need to deal with complex interpolation of quasi-Banach spaces, which adds difficulties that we have chosen to avoid.

Before moving on to the proof of Theorem \ref{cpxinterp}, we must fix some notation.
For $q \in (1,\infty)$ and $s \in \RR$, write
\begin{equation}\label{wtlq}
	L^q_s(X^+) := L^q(X^+, V^{-qs-1}) := L^q\left(X^+, V^{-qs}(y,t) \, \frac{d\gm(y)}{V(y,t)} \, \frac{dt}{t} \right)
\end{equation}
(this notation is consistent with viewing the function $V^{-qs-1}$ as a weight on the product measure $d\gm \, dt/t$).

An important observation, originating from Harboure, Torrea, and Viviani \cite{HTV91}, is that for all $p \in [1,\infty)$, $q \in (1,\infty)$ and $s \in \RR$, one can write
\begin{equation*}
	\norm{f}_{T^{p,q}_s} = \norm{Hf \mid L^p(X:L_s^q(X^+))} 
\end{equation*}
for $f \in L^0(X^+)$, where
\begin{equation*}
	Hf(x) = \mb{1}_{\gG(x)} f.
\end{equation*}
Hence $H$ is an isometry from $T^{p,q}_s$ to $L^p(X:L^q_s(X^+))$.
Because of the restriction on $q$, the theory of Lebesgue spaces (more precisely, Bochner spaces) with values in reflexive Banach spaces is then available to us.

This proof follows previous arguments of the author \cite{aA14}, which are based on the ideas of Harboure, Torrea, and Viviani \cite{HTV91} and of Bernal \cite{aB92}, with only small modifications to incorporate additional parameters.
We include it to show where these modifications occur: in the use of duality, and in the convex reduction.

\begin{proof}[Proof of Theorem \ref{cpxinterp}]
	First we will prove the result for $p_0,p_1 \in (1,\infty)$.
	Since $H$ is an isometry from $T^{p_j,q_j}_{s_j}$ to $L^{p_j}(X : L_{s_j}^{q_j}(X^+))$ for $j = 0,1$, the interpolation property implies that $H$ is bounded (with norm $\leq 1$ due to exactness of the complex interpolation functor)
	\begin{equation*}
		[T^{p_0,q_0}_{s_0}, T^{p_1,q_1}_{s_1}]_\gq \to L^{p_\gq}\left(X : [L^{q_0}_{s_0}(X^+), L_{s_1}^{q_1}(X^+)]_\gq\right).
	\end{equation*}
	Here we have used the standard identification of complex interpolants of Banach-valued Lebesgue spaces \cite[Theorem 5.1.2]{BL76}.
	The standard identification of complex interpolants of weighted Lebesgue spaces \cite[Theorem 5.5.3]{BL76} gives
	\begin{equation*}
		[L_{s_0}^{q_0}(X^+), L_{s_1}^{q_1}(X^+)]_\gq = L_{s_\gq}^{q_\gq}(X^+), 
	\end{equation*}
	and we conclude that
	\begin{align*}
		\norm{f}_{T^{p_\gq,q_\gq}_{s_\gq}}
		&= \norm{Hf \mid L^{p_\gq}(X : L_{s_\gq}^{q_\gq}(X^+))} \\
		&\leq \norm{f \mid [T^{p_0,q_0}_{s_0}, T^{p_1,q_1}_{s_1}]_\gq}
	\end{align*}
	for all $f \in [T^{p_0,q_0}_{s_0},T^{p_1,q_1}_{s_1}]_\gq$.
	Therefore
	\begin{equation}\label{cint1}
		[T^{p_0,q_0}_{s_0}, T^{p_1,q_1}_{s_1}]_\gq \subset T^{p_\gq, q_\gq}_{s_\gq}.
	\end{equation}
	
	To obtain the reverse inclusion, we use the duality theorem for complex interpolation \cite[Theorem 4.5.1 and Corollary 4.5.2]{BL76}.
	Since $X$ is doubling, and by our restrictions on the exponents, the spaces $T^{p_0,q_0}_{s_0}$ and $T^{p_1,q_1}_{s_1}$ are reflexive (by Proposition \ref{wtduality}) with intersection dense in both spaces (as it contains the dense subspace $L^{\max(q_0,q_1)}_c(X^+)$ by Proposition \ref{density-completeness}).
	Therefore the assumptions of the duality theorem for complex interpolation are satisfied, and we have
	\begin{align*}
		T^{p_\gq, q_\gq}_{s_\gq} &= (T^{p_\gq^\prime, q_\gq^\prime}_{-s_\gq})^\prime \\
		&\subset [T^{p_0^\prime, q_0^\prime}_{-s_0}, T^{p_1^\prime, q_1^\prime}_{-s_1}]_\gq^\prime \\
		&= [T^{p_0,q_0}_{s_0}, T^{p_1,q_1}_{s_1}]_\gq
	\end{align*}
	where the first two lines follow from Proposition \eqref{wtduality} and \eqref{cint1}, and the third line uses the duality theorem for complex interpolation combined with Proposition \ref{wtduality}.
	
	We can extend this result to $p_0,p_1 \in [1,\infty]$ using the technique of \cite[Proposition 3.18]{aA14}. The argument is essentially identical, so we will not include the details here.
\end{proof}
	
\subsection{Real interpolation: the reflexive range}\label{sec:real-int}

In order to discuss real interpolation of weighted tent spaces, we need to introduce a new scale of function spaces, which we denote by $Z^{p,q}_s = Z^{p,q}_s(X)$.\footnote{We use this notation because almost every other reasonable letter seems to be taken.}

\begin{dfn}\label{Zdfn}
For $c_0 \in (0,\infty)$, $c_1 \in (1,\infty)$, and $(x,t) \in X^+$, we define the \emph{Whitney region}
\begin{equation*}
	\gO_{c_0,c_1}(x,t) := B(x,c_0 t) \times (c_1^{-1} t, c_1 t) \subset X^+,
\end{equation*}
and for $q \in (0,\infty)$, $f \in L^0(X^+)$, and $(x,t) \in X^+$ we define the \emph{$L^q$-Whitney average}
\begin{equation*}
	(\mc{W}^q_{c_0,c_1} f)(x,t) := \left( \bariint_{\gO_{c_0,c_1}(x,t)}^{} |f(\gx,\gt)|^q \, d\gm(\gx) \, d\gt \right)^{1/q}.
\end{equation*}

For $p,q \in (0,\infty)$, $s \in \RR$, $c_0 \in (0,\infty)$, $c_1 \in (1,\infty)$, and $f \in L^0(X^+)$, we then define the quasi-norm
\begin{equation*}
	\norm{f}_{Z^{p,q}_s(X;c_0,c_1)} := \norm{\mc{W}_{c_0,c_1}^q (V^{-s} f)}_{L^p(X^+)}.
\end{equation*}
and the \emph{$Z$-space}
\begin{equation*}
	Z^{p,q}_s(X;c_0,c_1) := \{f \in L^0(X^+) : \norm{f}_{Z^{p,q}_s(X;c_0,c_1)} < \infty\}.
\end{equation*}
\end{dfn}

In this section we will prove the following theorem, which identifies real interpolants of weighted tent spaces \emph{in the reflexive range}.
We will extend this to the full range of exponents in the Euclidean case in the next section.

\begin{thm}\label{rint}
	Suppose that $X$ is AD-regular and unbounded, $p_0, p_1,q \in (1,\infty)$, $s_0 \neq s_1 \in \RR$, and $\gq \in (0,1)$.
	Then for any $c_0 \in (0,\infty)$ and $c_1 \in (1,\infty)$ we have the identification
	\begin{equation}\label{eqn:rintidentif}
		(T^{p_0,q}_{s_0},T^{p_1,q}_{s_1})_{\gq,p_\gq} = Z^{p_\gq,q}_{s_\gq}(X;c_0,c_1)
	\end{equation}
	with equivalent norms, where $p_\gq^{-1} = (1-\gq)p_0^{-1} + \gq p_1^{-1}$ and $s_\gq = (1-\gq)s_0 + \gq s_1$.
\end{thm}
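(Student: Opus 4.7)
The plan is to reduce the real interpolation of weighted tent spaces to a vector-valued real interpolation problem via the isometric embedding $H \colon T^{p,q}_s \hookrightarrow L^p(X ; L^q_s(X^+))$ introduced in Section \ref{cint}, and then to identify the resulting space with $Z^{p_\gq, q}_{s_\gq}$ using the Whitney structure available under AD-regularity.

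Since $H$ is an isometry from $T^{p_j, q}_{s_j}$ into $L^{p_j}(X ; L^q_{s_j}(X^+))$ for $j = 0, 1$, the interpolation property yields a norm-one embedding
\begin{equation*}
(T^{p_0, q}_{s_0}, T^{p_1, q}_{s_1})_{\gq, p_\gq} \hookrightarrow \bigl(L^{p_0}(X ; L^q_{s_0}(X^+)), L^{p_1}(X ; L^q_{s_1}(X^+))\bigr)_{\gq, p_\gq}.
\end{equation*}
By the diagonal Lions--Peetre theorem for vector-valued Lebesgue spaces \cite[Theorem 5.1.2]{BL76}, the right-hand side equals $L^{p_\gq}(X ; (L^q_{s_0}(X^+), L^q_{s_1}(X^+))_{\gq, p_\gq})$. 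It therefore remains to identify the inner real interpolant and match the resulting expression against the $Z$-space norm, at least on the image of $H$.

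The inner interpolant $(L^q_{s_0}(X^+), L^q_{s_1}(X^+))_{\gq, p_\gq}$ is the real interpolation of two weighted $L^q$ spaces whose weights $V^{-qs_j-1}$ differ by the multiplicative factor $V^{q(s_0-s_1)}$. Since $X$ is AD-regular and unbounded, $V(y,t)$ ranges over all of $(0,\infty)$, and a direct $K$-functional computation (of Sedaev--Semenov type) identifies this interpolation space as a weighted Lorentz-type $L^{p_\gq, q}$ space on $X^+$, whose Lorentz weight is determined by the sublevel sets of $V^{s_0-s_1}$. The crucial geometric step is then to identify this Lorentz-type space, when combined with the outer $L^{p_\gq}(X ; \cdot)$ via the cone support of $H$, with $Z^{p_\gq, q}_{s_\gq}$. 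Here a Whitney decomposition of $X^+$ into regions $\gO_{c_0, c_1}(y, t)$ on which the weights $V^{-qs_j}$ are essentially constant (by AD-regularity) permits the Lorentz-type norm to be discretized into a local $L^q$ Whitney average times a power of $V(y, t)$, which after combining via $L^{p_\gq}$ over $X^+$ produces $\norm{\mc{W}^q_{c_0, c_1}(V^{-s_\gq} f)}_{L^{p_\gq}(X^+)}$.

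The reverse inclusion would be obtained by producing, for each $f \in Z^{p_\gq, q}_{s_\gq}$, an explicit splitting $f = f_0 + f_1$ along superlevel sets of $V$ so that $\norm{f_0}_{T^{p_0, q}_{s_0}}$ and $\norm{f_1}_{T^{p_1, q}_{s_1}}$ realize the $K$-functional at a prescribed value. The main obstacle I expect is the passage between the abstract Lorentz-type interpolant of weighted $L^q$ spaces and the Whitney-average expression: this requires AD-regularity essentially, to guarantee that $V^{-s}$ is almost constant on each Whitney region, together with careful bookkeeping of the $L^{p_\gq}(X)$-to-$L^{p_\gq}(X^+)$ conversion arising from the cone structure of $Hf$ (which is where the measure $d\gm(y)\,dt/t$ of the $Z$-space appears).
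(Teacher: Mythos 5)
Your outline correctly reproduces the forward half of the argument: use the isometric embedding $H$ to push the tent-space couple into the vector-valued couple $(L^{p_0}(X:L^q_{s_0}(X^+)), L^{p_1}(X:L^q_{s_1}(X^+)))$, apply a Lions--Peetre theorem for real interpolation of vector-valued Lebesgue spaces to reduce to the scalar weighted couple $(L^q_{s_0}(X^+), L^q_{s_1}(X^+))_{\gq,p_\gq}$, identify that interpolant of weighted $L^q$ spaces via a Gilbert / Sedaev--Semenov type $K$-functional characterisation (AD-regularity and unboundedness convert the sublevel sets of the ratio weight $V^{s_0-s_1}$ into time slabs), and then discretise over Whitney regions to recover the $Z$-quasinorm. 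This is the paper's strategy for one inclusion. (Minor point: for the vector-valued step you need the \emph{real}-interpolation Lions--Peetre theorem, not \cite[Theorem 5.1.2]{BL76}, which is the complex statement; the paper cites \cite[Remark 7]{gP93}.)

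The genuine gap is in the reverse inclusion. The interpolation property applied to $H$ gives only
\begin{equation*}
(T^{p_0,q}_{s_0}, T^{p_1,q}_{s_1})_{\gq,p_\gq} \hookrightarrow \bigl(L^{p_0}(X : L^q_{s_0}(X^+)), L^{p_1}(X : L^q_{s_1}(X^+))\bigr)_{\gq,p_\gq},
\end{equation*}
with no information the other way. You propose to recover equality by exhibiting, for each $f \in Z^{p_\gq,q}_{s_\gq}$, an explicit splitting $f = f_0 + f_1$ along superlevel sets of $V$ that realises the $K$-functional of the tent-space couple. But the tent-space $K$-functional involves cone integrals $\mc{A}^q$ rather than pointwise quantities, and it is not established anywhere in your sketch that time-slab splittings are near-optimal for $K(\ga, f; T^{p_0,q}_{s_0}, T^{p_1,q}_{s_1})$; that near-optimality is exactly the nontrivial content one would have to prove. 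The paper sidesteps this entirely by observing (as recorded in Proposition \ref{rintprop} and the discussion before Proposition 3.12 of \cite{aA14}) that $H(T^{p,q}_s)$ is a \emph{complemented} subspace of $L^p(X : L^q_s(X^+))$ with a common projection for both endpoints of the couple. By the retraction/coretraction theorem \cite[Theorem 1.17.1.1]{hT78}, this upgrades the one-sided embedding to a genuine two-sided equivalence
\begin{equation*}
\norm{f \mid (T^{p_0,q}_{s_0}, T^{p_1,q}_{s_1})_{\gq,p_\gq}} \simeq \norm{Hf \mid (L^{p_0}(X:L^q_{s_0}(X^+)), L^{p_1}(X:L^q_{s_1}(X^+)))_{\gq,p_\gq}},
\end{equation*}
after which Gilbert's theorem and the change-of-aperture theorem give the two-sided $Z$-space identification. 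Without the complementation observation (or a direct proof that your proposed splittings are $K$-functional near-optimal, which amounts to re-proving the retract property by hand), the argument establishes only one of the two inclusions in \eqref{eqn:rintidentif}.
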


As a corollary, in the case when $X$ is AD-regular and unbounded, and when $p, q > 1$, the spaces $Z^{p,q}_s(X;c_0,c_1)$ are independent of the parameters $(c_0,c_1)$ with equivalent norms, and we can denote them all simply by $Z^{p,q}_s$.\footnote{One can  prove independence of the parameters $(c_0,c_1)$ directly when $X$ is doubling, but proving this here would take us even further off course.}
We remark that most of the proof does not require AD-regularity, but in its absence we obtain identifications of the real interpolants which are less convenient.

The proof relies on the following identification of real interpolants of weighted $L^q$ spaces, with fixed $q$ and distinct weights, due to Gilbert \cite[Theorem 3.7]{jG72}.
The cases $p \leq 1$ and $q < 1$ are not considered there, but the proof still works without any modifications in these cases.
Note that the original statement of this theorem contains a sign error in the expression corresponding to \eqref{gilbertdisc}.

\begin{thm}[Gilbert]\label{gilbert}
	Suppose $(M,\gm)$ is a $\gs$-finite measure space and let $w$ be a weight on $(M,\gm)$.
	Let $p,q \in (0,\infty)$ and $\gq \in (0,1)$.
	For all $r \in (1,\infty)$, and for $f \in L^0(M)$, the expressions
	\begin{equation}\label{gilbertdisc}
		\norm{\left(r^{-k\gq} \norm{\mb{1}_{x : w(x) \in (r^{-k},r^{-k+1}]}f}_{L^q(M)}\right)_{k \in \ZZ}}_{\ell^p(\ZZ)}
	\end{equation}
	\begin{equation}\label{gilbert2}
		\norm{s^{1-\gq} \norm{\mb{1}_{x : w(x) \leq 1/s} f}_{L^q(M,w^q)}}_{L^p(\RR_+, ds/s)}
	\end{equation}
	and
	\begin{equation}\label{gilbert3}
		\norm{s^{-\gq} \norm{\mb{1}_{x : w(x) > 1/s} f}_{L^q(M)}}_{L^p(\RR_+, ds/s)}
	\end{equation}
	define equivalent norms on the real interpolation space
	\begin{equation*}
		(L^q(M), L^q(M,w^q))_{\gq,p}.
	\end{equation*}
\end{thm}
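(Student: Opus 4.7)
The strategy is to compute the $K$-functional of the couple $(L^q(M), L^q(M, w^q))$ explicitly and then recognise each of the three expressions as a rephrasing of the resulting $K$-method norm, with the discrete version \eqref{gilbertdisc} extracted from \eqref{gilbert3} via a discrete Hardy inequality.

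For the $K$-functional, since both spaces are $L^q$ over the same measure $\gm$, the splitting $f = f_0 + f_1$ can be optimised pointwise. Using the (quasi-)triangle inequalities $a + b \simeq (a^q + b^q)^{1/q}$, with constants depending on $q$, a standard computation yields
\begin{equation*}
K(t,f)^q \simeq \int_M \min(1, t^q w(x)^q) |f(x)|^q \, d\gm(x),
\end{equation*}
the integrand splitting according to whether $t w(x) > 1$ or not. Consequently $K(t,f) \simeq A(t) + t B(t)$, where $A(t) := \norm{\mathbf{1}_{\{w > 1/t\}} f}_{L^q(M)}$ and $B(t) := \norm{\mathbf{1}_{\{w \leq 1/t\}} f}_{L^q(M,w^q)}$. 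Substituting this into the definition of the $K$-method quasi-norm,
\begin{equation*}
\norm{f \mid (L^q(M), L^q(M,w^q))_{\gq,p}}^p \simeq \int_0^\infty (s^{-\gq} A(s))^p \, \frac{ds}{s} + \int_0^\infty (s^{1-\gq} B(s))^p \, \frac{ds}{s},
\end{equation*}
directly identifies the quasi-norms \eqref{gilbert3} and \eqref{gilbert2} with the interpolation quasi-norm.

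To recover \eqref{gilbertdisc}, I would partition $\{x \in M : w(x) > 0\}$ into the level sets $E_k := \{x : w(x) \in (r^{-k}, r^{-k+1}]\}$ for $k \in \ZZ$ and set $a_k := \norm{\mathbf{1}_{E_k} f}_{L^q(M)}$. For $s \in (r^{k-1}, r^k]$ the set $\{w > 1/s\}$ is sandwiched between $\bigcup_{j \leq k-1} E_j$ and $\bigcup_{j \leq k} E_j$, so $A(s)^q$ is comparable to $\sum_{j \leq k} a_j^q$ and $s^{-\gq p}$ is comparable to $r^{-k \gq p}$. Integrating then yields
\begin{equation*}
\int_0^\infty (s^{-\gq} A(s))^p \, \frac{ds}{s} \simeq \sum_{k \in \ZZ} r^{-k\gq p} \left( \sum_{j \leq k} a_j^q \right)^{p/q}.
\end{equation*}
Since $\big(\sum_{j \leq k} a_j^q\big)^{1/q}$ trivially dominates $a_k$, the direction $\sum_k r^{-k\gq p} a_k^p \lesssim \sum_k r^{-k\gq p} \big(\sum_{j\leq k} a_j^q\big)^{p/q}$ is immediate, and the converse reduces to a standard discrete Hardy inequality. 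This Hardy step is the main technical point: it splits into the cases $p \leq q$ (where the $p/q$-th power is subadditive and one sums term by term) and $p > q$ (where one applies Young's inequality on $\ZZ$ against the geometric kernel $(r^{-k \gq q} \mathbf{1}_{k \geq 0})_k$). In both regimes the hypotheses $\gq \in (0,1)$ and $r > 1$ make the geometric weights summable, so the passage to \eqref{gilbertdisc} is routine.
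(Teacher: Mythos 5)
There is a genuine gap. Your $K$-functional computation is correct, and so is the resulting identity
\begin{equation*}
	\norm{f \mid (L^q(M), L^q(M,w^q))_{\gq,p}}^p \simeq \int_0^\infty (s^{-\gq} A(s))^p \, \frac{ds}{s} + \int_0^\infty (s^{1-\gq} B(s))^p \, \frac{ds}{s}.
\end{equation*}
But this says only that the \emph{sum} of \eqref{gilbert3} and \eqref{gilbert2} is comparable to the interpolation norm; it does not ``directly identify'' either expression \emph{individually} with that norm. From $K \simeq A + sB$ one trivially gets that each integral is dominated by the interpolation norm, but the reverse bounds --- for instance $\int (s^{1-\gq}B)^p \, ds/s \lesssim \int (s^{-\gq}A)^p \, ds/s$, which is what you need to conclude that \eqref{gilbert3} alone is comparable --- are exactly the Hardy-type inequalities which are the substance of the theorem. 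Treating them as automatic is the gap.

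Your discretisation-plus-Hardy argument for \eqref{gilbert3} is fine; the issue is that you only carry it out for \eqref{gilbert3}. To close the proof you must also show $\eqref{gilbert2} \simeq \eqref{gilbertdisc}$ by the parallel argument. Writing $b_j := r^{-jq} a_j^q$, for $s \in (r^{k-1}, r^k]$ one has $B(s)^q \simeq \sum_{j \geq k} b_j$ (the set $\{w \leq 1/s\}$ is now sandwiched between $\bigcup_{j\geq k+1}E_j$ and $\bigcup_{j\geq k}E_j$, and $w \simeq r^{-j}$ on $E_j$), so
\begin{equation*}
	\int_0^\infty (s^{1-\gq}B(s))^p \, \frac{ds}{s} \simeq \sum_{k\in\ZZ} r^{k(1-\gq)p}\Bigl(\sum_{j\geq k} b_j\Bigr)^{p/q},
\end{equation*}
and the discrete Hardy inequality now uses summability of $r^{-m(1-\gq)q}\mb{1}_{m\geq 0}$, which needs $\gq < 1$ (mirroring your use of $\gq > 0$ for \eqref{gilbert3}). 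Keeping track of the $r^{-jp}$ factor this collapses to $\sum_k r^{-k\gq p} a_k^p$. Once both Hardy steps are in place, the identity above forces all four quantities --- the interpolation norm, \eqref{gilbertdisc}, \eqref{gilbert2}, and \eqref{gilbert3} --- to be comparable, which is what the theorem asserts. Note that the paper itself does not reprove this statement: it cites Gilbert and merely remarks that his argument extends to $p \leq 1$ and $q < 1$, so your $K$-functional/Hardy approach is the expected one; it just needs the second Hardy step to be complete.
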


The first step in the proof of Theorem \ref{rint} is a preliminary identification of the real interpolation norm.

\begin{prop}\label{rintprop}
	Let all numerical parameters be as in the statement of Theorem \ref{rint}.
	Then for all $f \in L^0(X^+)$ we have the equivalence
	\begin{equation}\label{eqn:rintprop}
		\norm{f \mid (T_{s_0}^{p_0,q}, T_{s_1}^{p_1,q})_{\gq,p_\gq}} \simeq \norm{x \mapsto \norm{\mb{1}_{\gG(x)} f \mid (L_{s_0}^q(X^+), L_{s_1}^q(X^+))_{\gq,p_\gq}}}_{L^{p_\gq}(X)}.
	\end{equation}
\end{prop}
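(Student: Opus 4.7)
The approach is to use the isometric embedding $H : T^{p,q}_s \hookrightarrow L^p(X; L^q_s(X^+))$, $Hf(x) := \mb{1}_{\gG(x)} f$, recalled just before the proof of Theorem~\ref{cpxinterp}, to transfer the real interpolation problem for tent spaces to one about vector-valued Lebesgue spaces, and then to exploit the special structure of the inner couple to identify the interpolation.

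First I would exhibit a single operator $R$ that is a bounded left inverse of $H$ simultaneously on both interpolation endpoints. A natural candidate is the cone average
\begin{equation*}
(RF)(y,t) := \frac{1}{V(y,t)} \int_{B(y,t)} F(x)(y,t) \, d\gm(x),
\end{equation*}
for which the identity $RH = \mathrm{Id}$ follows immediately from $\{x \in X : (y,t) \in \gG(x)\} = B(y,t)$. Boundedness of $R : L^{p_j}(X; L^q_{s_j}(X^+)) \to T^{p_j,q}_{s_j}$ for $j = 0,1$ should follow from Jensen's inequality applied to the inner average, followed by Fubini and the usual swap of cone integrations. With $H$ a coretraction and $R$ a uniform retraction, the standard retraction lemma for the real interpolation functor yields
\begin{equation*}
\norm{f \mid (T^{p_0,q}_{s_0}, T^{p_1,q}_{s_1})_{\gq, p_\gq}}
\simeq
\norm{Hf \mid \big(L^{p_0}(X; L^q_{s_0}(X^+)), L^{p_1}(X; L^q_{s_1}(X^+))\big)_{\gq, p_\gq}}.
\end{equation*}

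It then remains to identify the right-hand side with $\norm{Hf \mid L^{p_\gq}(X; (L^q_{s_0}(X^+), L^q_{s_1}(X^+))_{\gq, p_\gq})}$. This is the main obstacle, because for a generic compatible Banach couple $(A_0, A_1)$ and distinct exponents $p_0 \neq p_1$ the identity $(L^{p_0}(A_0), L^{p_1}(A_1))_{\gq, p_\gq} = L^{p_\gq}((A_0, A_1)_{\gq, p_\gq})$ is known to fail. Our inner couple, however, consists of two weighted $L^q$ spaces with the \emph{same} exponent $q$, differing only by the weight $w := V^{s_0 - s_1}$. Gilbert's theorem (Theorem~\ref{gilbert}), applied with this weight, represents the inner interpolation norm as a discrete $\ell^{p_\gq}$-sum of $L^q_{s_0}$-norms over level sets of $w$. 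A parallel discretization of the $K$-functional for the outer couple, based on the same level sets, produces a matching $\ell^{p_\gq}$-sum of $L^{p_j}(X; L^q_{s_j})$-norms, and a Fubini interchange between the discrete sum and the outer $L^{p_\gq}(X)$ integral then gives the two-sided comparison. The hard part is precisely reconciling these two discrete representations: a purely abstract Banach-couple argument would not suffice, and one must use the pointwise-in-$(y,t)$ nature of the weight $w$ together with the doubling structure of $X$ to carry out the interchange.
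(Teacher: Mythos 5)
Your retraction step is essentially the paper's first step: the paper also uses the isometry $H$ and the fact that $H(T^{p,q}_s)$ is complemented in $L^p(X : L^q_s(X^+))$ with a common bounded projection (it cites this from \cite{aA14} rather than spelling out the cone-average operator $R$, but the projection there is exactly your $HR$). So the reduction to identifying
\begin{equation*}
\big(L^{p_0}(X : L^q_{s_0}(X^+)),\, L^{p_1}(X : L^q_{s_1}(X^+))\big)_{\gq, p_\gq}
\end{equation*}
is correct and matches the paper.

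Where you go wrong is the central claim that for a generic compatible couple $(A_0, A_1)$ with $p_0 \neq p_1$ the identity $(L^{p_0}(A_0), L^{p_1}(A_1))_{\gq, p_\gq} = L^{p_\gq}\big((A_0, A_1)_{\gq, p_\gq}\big)$ ``is known to fail.'' It does not fail: this is precisely the Lions--Peetre theorem on real interpolation of vector-valued Lebesgue spaces, and it holds in full generality \emph{because} the second interpolation parameter is chosen equal to $p_\gq$. (The negative results of Cwikel and others concern a second parameter $r \neq p_\gq$; in that case $(L^{p_0}(A_0), L^{p_1}(A_1))_{\gq, r}$ need not be an $L^p$ space of any interpolation space of the couple.) The paper invokes exactly this positive result, citing \cite[Remark 7]{gP93}, and the proposition follows immediately, with no need to know anything about the structure of $(L^q_{s_0}(X^+), L^q_{s_1}(X^+))$ at this stage.

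Consequently, your proposed workaround via Gilbert's theorem and a ``parallel discretization of the $K$-functional'' is unnecessary for Proposition~\ref{rintprop}, and you have not actually carried it out — the Fubini interchange between the discrete $k$-sum coming from the level sets of $w$ and the outer $\int_X$ needs justification, since the discrete decomposition of the outer $K$-functional is not simply the integral of the pointwise one. Gilbert's theorem does of course enter the paper's argument, but only afterwards, in Theorem~\ref{gilbcor}, where the inner interpolation norm $\norm{\mb{1}_{\gG(x)} f \mid (L^q_{s_0}(X^+), L^q_{s_1}(X^+))_{\gq, p_\gq}}$ is made explicit. So the correct diagnosis is: retraction plus Lions--Peetre gives Proposition~\ref{rintprop} directly; Gilbert is the tool for the next step, not this one.
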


\begin{proof}
	We use the notation of the previous section.
	We have already noted that the map $\map{H}{T_s^{p,q}}{L^p(X : L_s^q(X^+))}$ with $Hf(x) = \mb{1}_{\gG(x)}f$ is an isometry.
	Furthermore, as shown in \cite{aA14} (see the discussion preceding Proposition 3.12 there), $H(T_s^{p,q})$ is complemented in $L^p(X : L_s^q(X^+))$, and there is a common projection onto these spaces.
	Therefore we have (by \cite[Theorem 1.17.1.1]{hT78} for example)
	\begin{equation*}
		\norm{ f \mid (T_{s_0}^{p_0,q}, T_{s_1}^{p_1,q})_{\gq,p_\gq} } \simeq \norm{Hf \mid (L^{p_0}(X : L_{s_0}^{q}(X^+)), L^{p_1}(X : L_{s_1}^{q}(X^+)))_{\gq,p_\gq}}.
	\end{equation*}
	The Lions--Peetre result on real interpolation of Banach-valued Lebesgue spaces (see for example \cite[Remark 7]{gP93}) then implies that
	\begin{equation*}
		\norm{ f \mid (T_{s_0}^{p_0,q}, T_{s_1}^{p_1,q})_{\gq,p_\gq} } \simeq \norm{Hf \mid (L^{p_\gq} (X : (L_{s_0}^q(X^+), L_{s_1}^q(X^+))_{\gq,p_\gq})}.
	\end{equation*}
	Since $Hf(x) = \mb{1}_{\gG(x)}f$, this proves \eqref{eqn:rintprop}.
\end{proof}

Having proven Proposition \ref{rintprop}, we can use Theorem \ref{gilbert} to provide some useful characterisations of the real interpolation norm.
For $f \in L^0(X^+)$ and $a,b \in [0,\infty]$, we define the truncation
\begin{align*}
	f_{a,b} := \mb{1}_{X \times (a,b)} f.
\end{align*}
Note that in this theorem we allow for $p_0,p_1 \leq 1$; we will use this range of exponents in the next section.

\begin{thm}\label{gilbcor}
	Suppose $p_0, p_1,q \in (0,\infty)$, $s_0 \neq s_1 \in \RR$, and $\gq \in (0,1)$, and suppose that $X$ is AD-regular of dimension $n$ and unbounded.
	Let $r \in (1,\infty)$.
	Then for $f \in L^0(X^+)$ we have norm equivalences
	\begin{align}
		&\norm{x \mapsto \norm{\mb{1}_{\gG(x)} f \mid (L_{s_0}^q(X^+), L_{s_1}^q(X^+))_{\gq,p_\gq}}}_{L^{p_\gq}(X)} \nonumber \\
		&\simeq \norm{\gt^{n(s_1-s_0)(1-\gq)} \norm{f_{\gt,\infty}}_{T_{s_1}^{p_\gq,q}}}_{L^{p_\gq}(\RR_+, d\gt/\gt)} \label{inftynorm}\\
		&\simeq \norm{\gt^{-n(s_1-s_0)\gq} \norm{f_{0,\gt}}_{T_{s_0}^{p_\gq,q}} }_{L^{p_\gq}(\RR_+, d\gt/\gt)} \label{0norm}\\
		&\simeq_r \norm{ (r^{-nk\gq(s_1 - s_0)} \norm{f_{r^{-k},r^{-k+1}}}_{T^{p_\gq,q}_{s_0}})_{k \in \ZZ}}_{\ell^{p_\gq}(\ZZ)}. \label{seqnorm}
	\end{align}
\end{thm}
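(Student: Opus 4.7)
The plan is to apply Gilbert's Theorem \ref{gilbert} pointwise in $x$ to produce three equivalent expressions for $\norm{\mb{1}_{\gG(x)} f \mid (L^q_{s_0}(X^+), L^q_{s_1}(X^+))_{\gq, p_\gq}}$, then take the $L^{p_\gq}(X)$ norm and exchange the order of integration. To set up Gilbert, write $L^q_{s_0}(X^+) = L^q(X^+, d\gn)$ for $d\gn := V^{-qs_0}(y,t)\, d\gm(y)/V(y,t)\, dt/t$; then $L^q_{s_1}(X^+) = L^q(X^+, w^q\, d\gn)$ with weight $w := V^{s_0 - s_1}$. Without loss of generality assume $s_0 < s_1$, so that AD-regularity of dimension $n$ gives $w(y,t) \simeq t^{-n(s_1-s_0)}$ with implicit constants depending only on the AD-regularity constant.

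For \eqref{0norm}, use Gilbert's form \eqref{gilbert3} at each $x$:
\begin{equation*}
\norm{\mb{1}_{\gG(x)} f \mid (L^q_{s_0}, L^q_{s_1})_{\gq, p_\gq}}^{p_\gq} \simeq \int_0^\infty s^{-\gq p_\gq}\, \norm{\mb{1}_{\{w > 1/s\}}\, \mb{1}_{\gG(x)} f}_{L^q_{s_0}}^{p_\gq}\, \frac{ds}{s}.
\end{equation*}
Integrating in $x$, applying Fubini (legitimate because the integrand is nonnegative and $X^+$ is $\gs$-finite), and recognising from the definition of $\mc{A}^q$ that $\norm{\mb{1}_{\gG(x)} g}_{L^q_{s_0}} = \mc{A}^q(V^{-s_0} g)(x)$, the double integral reduces to
\begin{equation*}
\int_0^\infty s^{-\gq p_\gq}\, \norm{\mb{1}_{\{w > 1/s\}} f}_{T^{p_\gq, q}_{s_0}}^{p_\gq}\, \frac{ds}{s}.
\end{equation*}
AD-regularity produces constants $0 < c_1 \leq c_2$ with
\begin{equation*}
\mb{1}_{X \times (0,\, c_1\, s^{1/(n(s_1-s_0))})} \leq \mb{1}_{\{w > 1/s\}} \leq \mb{1}_{X \times (0,\, c_2\, s^{1/(n(s_1-s_0))})},
\end{equation*}
and the substitution $\gt = c_i\, s^{1/(n(s_1-s_0))}$ (under which $ds/s$ becomes a constant multiple of $d\gt/\gt$ and $s^{-\gq}$ becomes a constant multiple of $\gt^{-n(s_1-s_0)\gq}$) sandwiches the $s$-integral between two expressions equivalent to the right-hand side of \eqref{0norm}.

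The same recipe proves the other two equivalences. Gilbert's form \eqref{gilbert2} gives \eqref{inftynorm}: the role of $L^q_{s_0}$ is played by $L^q_{s_1}$, and the set $\{w \leq 1/s\}$ corresponds via AD-regularity to $t \gtrsim s^{1/(n(s_1-s_0))}$, yielding the truncation $f_{\gt, \infty}$ after the same change of variable and producing the factor $\gt^{n(s_1-s_0)(1-\gq)}$ from $s^{1-\gq}$. Gilbert's discrete form \eqref{gilbertdisc} gives \eqref{seqnorm}: the level sets $\{w \in (r^{-k}, r^{-k+1}]\}$ correspond to annuli $t \in [r^{(k-1)/(n(s_1-s_0))}, r^{k/(n(s_1-s_0))})$, which after re-indexing $k \mapsto -k+1$ and replacing the base $r$ by $r^{n(s_1-s_0)}$ become the truncations $f_{r^{-k}, r^{-k+1}}$ with the stated weighting $r^{-nk\gq(s_1-s_0)}$.

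I anticipate no conceptual obstacle: the substantive ingredients are Gilbert's theorem, Fubini for nonnegative integrands, and the explicit change of variable $\gt \leftrightarrow s^{1/(n(s_1-s_0))}$. The sole bookkeeping nuisance is that AD-regularity yields set inclusions with distinct constants rather than an equality, so each equivalence is obtained by sandwiching between two truncations with slightly different cutoffs; this affects only the implicit multiplicative constants, which depend on $n$, $\gq$, $s_1 - s_0$, and the AD-regularity constant (and, for \eqref{seqnorm}, on $r$, matching the $\simeq_r$ in the statement).
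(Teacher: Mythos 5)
Your route differs from the paper's in one material respect. You apply Gilbert directly to the couple $(L^q_{s_0}(X^+), L^q_{s_1}(X^+))$ using the weight $w = V^{s_0-s_1}$, which is only \emph{comparable} to the pure $t$-power $t^{-n(s_1-s_0)}$, and then clean up the fuzzy level sets afterward. The paper instead uses AD-regularity \emph{before} invoking Gilbert: it introduces the measures $d\gm^q_{s_i} = t^{-q s_i n}\, d\gm(y)\, dt/(V(y,t)t)$, observes that $L^q_{s_i}(X^+)$ and $L^q(\gm^q_{s_i})$ have equivalent norms (hence equal interpolation spaces), and applies Gilbert to the couple $(L^q(\gm^q_{s_0}), L^q(\gm^q_{s_1}))$ where the weight is \emph{exactly} $t^{-n(s_1-s_0)}$, so that the level sets are exact truncations in $t$.

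For the two continuous equivalences \eqref{inftynorm} and \eqref{0norm} your post-processing is correct: the sets $\{w > 1/s\}$ and $\{w \leq 1/s\}$ are genuinely sandwiched between pure $t$-truncation sets (with constants $c_1 = C^{-1/n}$, $c_2 = C^{1/n}$), the Fubini exchange is legitimate, and the monotone change of variable $\gt = c_i s^{1/(n(s_1-s_0))}$ absorbs $c_1, c_2$ into the implicit constants.

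For the discrete form \eqref{seqnorm}, however, your summary statement is wrong: the Gilbert level sets $\{w \in (r^{-k}, r^{-k+1}]\}$ are \emph{annuli} in $t$, not truncations, and they cannot be ``sandwiched between two truncations with slightly different cutoffs.'' AD-regularity only gives the two-sided inclusion between a wider and a narrower annulus whose radii involve the constants $c_1, c_2$; the narrower annulus may even be empty when $r$ is close to $1$, and neither annulus is aligned with the target $r$-adic grid $[r^{k-1}, r^k)$. The equivalence can still be rescued, but by a different and more involved argument: each fuzzy level set is covered by, and covers, a bounded (depending on $r$, $n$, $s_1-s_0$, and the AD constant) number of exact $r$-adic annuli; one then uses the $\gk$-subadditivity of the tent-space quasi-norm (with $\gk = \min(p_\gq, q, 1)$), the bounded number of terms, and a shift of the index. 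Your proposal does not supply this; the covering argument is a genuine extra step, not mere bookkeeping of multiplicative constants. The paper's device of first replacing the weighted $L^q$ norms by exact $t$-power-weighted equivalents sidesteps the issue entirely and is the cleaner route; if you prefer to keep your ordering, you must either add the covering argument sketched above or, more simply, note at the outset that $(L^q_{s_0}(X^+), L^q_{s_1}(X^+))_{\gq,p_\gq} = (L^q(\gm^q_{s_0}), L^q(\gm^q_{s_1}))_{\gq,p_\gq}$ with equivalent norms (real interpolation respects norm equivalences) and then run Gilbert with the exact weight.
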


\begin{proof}
	First assume that $s_1 > s_0$.
	Let $\gm_{s_0}^q$ be the measure on $X^+$ given by
	\begin{equation*}
		d\gm_{s_0}^q(y,t) := t^{-qs_0 n} \frac{d\gm(y) \, dt}{V(y,t) t}.
	\end{equation*}
	Since $X$ is AD-regular of dimension $n$ and unbounded, we have that $\norm{f}_{L^q(\gm_{s_0}^q)} \simeq \norm{f}_{L_{s_0}^q(X^+)}$.
	Also define the weight $w(y,t) := t^{-(s_1 - s_0)n}$, so that $w^q \gm_{s_0}^q = \gm_{s_1}^q$.
	
	We will obtain the norm equivalence \eqref{seqnorm}.
	For $1 < r < \infty$ and $k \in \ZZ$, we have $r^{-k} < w(y,t) \leq r^{-k+1}$ if and only if $t \in [r^{(k-1)/n(s_1-s_0)}, r^{k/n(s_1-s_0)})$ (here we use $s_1 > s_0$).
	Using the characterisation \eqref{gilbertdisc} of Theorem \ref{gilbert}, and replacing $r$ with $r^{n (s_1-s_0)}$, for $f \in L^0(X^+)$ we have
	\begin{align*}
		&\norm{x \mapsto \norm{\mb{1}_{\gG(x)} f \mid (L_{s_0}^q(X^+), L_{s_1}^q(X^+))_{\gq,p_\gq}}}_{L^{p_\gq}(X)} \\
		&\simeq \left( \int_X \norm{\mb{1}_{\gG(x)} f}_{(L^q(\gm_{s_0}^q), L^q(w^q \gm_{s_0}^q))_{\gq,p_\gq}}^{p_\gq} \, d\gm(x) \right)^{1/p_\gq} \\
		&\simeq \left( \int_X \sum_{k \in \ZZ} r^{-n(s_1 - s_0)k \gq p_\gq} \norm{\mb{1}_{\gG(x)} f_{r^{k-1}, r^k}}_{L^q(\gm_{s_0}^q)}^{p_\gq} \, d\gm(x) \right)^{1/p_\gq} \\
		&\simeq \left( \sum_{k \in \ZZ} r^{-n(s_1-s_0)k\gq p_\gq} \int_X \mc{A}^{q}(V^{-s_0} f_{r^{k-1},r^k})(x)^{p_\gq} \, d\gm(x) \right)^{1/p_\gq} \\
		&= \norm{(r^{-n(s_1-s_0)k\gq} \norm{f_{r^{k-1},r^k}}_{T^{p_\gq,q}_{s_0}})_{k \in \ZZ}}_{\ell^{p_\gq}(\ZZ)}.
	\end{align*}
	This proves the norm equivalence \eqref{seqnorm} for all $f \in L^0(X^+)$ when $s_1 > s_0$.
	If $s_1 < s_0$, one simply uses the identification $(L_{s_0}^q(X^+), L_{s_1}^q(X^+))_{\gq,p_\gq} = (L_{s_1}^q(X^+), L_{s_0}^q(X^+))_{1-\gq,p_\gq}$ \cite[Theorem 3.4.1(a)]{BL76} to reduce the problem to the case where $s_0 < s_1$. 
	
	The equivalences \eqref{inftynorm} and \eqref{0norm} follow from the characterisations \eqref{gilbert2} and \eqref{gilbert3} of Theorem \ref{gilbert} in the same way, with integrals replacing sums throughout.
	We omit the details here.
\end{proof}

Finally we can prove the main theorem: the identification of the real interpolants of weighted tent spaces as $Z$-spaces.

\begin{proof}[Proof of Theorem \ref{rint}.]
	Suppose $f \in L^0(X^+)$.
	Using the characterisation \eqref{seqnorm} in Theorem \ref{gilbcor} with $r = c_1 > 1$, and using aperture $c_0/c_1$ for the tent space (making use of the change of aperture theorem \cite[Proposition 3.21]{aA14}), we have
	\begin{align*}
		&\norm{f \mid (T^{p_0,q}_{s_0}, T^{p_1,q}_{s_1})_{\gq,p_\gq}}^{p_\gq} \\
		&\simeq \sum_{k \in \ZZ} c_1^{-n(s_1-s_0)k\gq p_\gq} \int_X \left( \int_{c_1^{k-1}}^{c_1^k} \int_{B(x,c_0 t / c_1)} |t^{-n s_0} f(y,t)|^q \, \frac{d\gm(y)}{V(y,t)} \, \frac{dt}{t} \right)^{p_\gq / q} \, d\gm(x) \\
		&\simeq \int_X \sum_{k \in \ZZ} c_1^{-n(s_1-s_0)k\gq p_\gq} \cdot  \int_{c_1^{k-1}}^{c_1^{k}} \left( \int_{c_1^{k-1}}^{c_1^{k}} \int_{B(x,c_0 t/c_1)} |t^{-n s_0} f(y,t)|^q \, \frac{d\gm(y)}{V(y,t)} \, \frac{dt}{t} \right)^{p_\gq /q} \, \frac{dr}{r} \, d\gm(x) \\
		&\lesssim \int_X \sum_{k \in \ZZ} c_1^{-n(s_1-s_0)k\gq p_\gq} \int_{c_1^{k-1}}^{c_1^{k}} \left( \bariint_{\gO_{c_0,c_1}(x,r)}^{} |r^{-n s_0} f(y,t)|^q \, d\gm(y) \, dt \right)^{p_\gq / q} \, \frac{dr}{r} \, d\gm(x) \\
		&\simeq \int_X \int_0^\infty r^{-n(s_1-s_0)\gq p_\gq} \left( \bariint_{\gO_{c_0,c_1}(x,r)}^{} |r^{-n s_0} f|^q \right)^{p_\gq / q} \, \frac{dr}{r} \, d\gm(x) \\
		&= \iint_{X^+} \left( \bariint_{\gO_{c_0,c_1}(x,r)}^{} |r^{-n s_\gq} f|^q \right)^{p_\gq / q} \, d\gm(x) \, \frac{dr}{r} \\
		&\simeq \norm{f}_{Z^{p_\gq, q}_{s_\gq}(X;c_0,c_1)}^{p_\gq},
	\end{align*}
	using that $B(x,c_0 t/c_1) \times (c_1^{k-1}, c_1^k) \subset \gO_{c_0,c_1}(x,r)$ whenever $r \in (c_1^{k-1}, c_1^k)$.
		
	To prove the reverse estimate we use the same argument, this time using that for $r,t \in (2^{k-1},2^k)$ we have $\gO_{c_0,c_1}(x,t) \subset B(x, 2c_0 t) \times (c_1^{-1} 2^{k-1}, c_1 2^k)$.
	Using aperture $2c_0$ for the tent space, we can then conclude that
	\begin{align*}
		&\norm{f}_{Z^{p_\gq,q}_{s_\gq}(X;c_0,c_1)}^{p_\gq} \\
		&\simeq \int_X \sum_{k \in \ZZ} 2^{-n(s_1 - s_0)k\gq p_\gq} \int_{2^{k-1}}^{2^{k}} \left( \bariint_{\gO_{c_0,c_1}(x,r)}^{} |r^{-n s_0} f|^q \right)^{p_\gq / q} \, \frac{dr}{r} \, d\gm(x) \\
		&\lesssim \int_X \sum_{k \in \ZZ} 2^{-n(s_1 - s_0)k\gq p_\gq}  \int_{2^{k-1}}^{2^{k}} \left( \int_{c_1^{-1} 2^{k-1}}^{c_1 2^{k}} \int_{B(x,2c_0t)} |r^{-ns_0} f(y,t)|^q \, \frac{d\gm(y)}{V(y,t)} \, \frac{dt}{t} \right)^{p_\gq / q} \, \frac{dr}{r} \, d\gm(x) \\
		&\simeq \int_X \sum_{k \in \ZZ} 2^{-n(s_1 - s_0)k\gq p_\gq}  \int_{c_1^{-1} 2^{k-1}}^{c_1 2^{k}} \left( \int_{c_1^{-1} 2^{k-1}}^{c_1 2^{k}} \int_{B(x,2c_0t)} |r^{-ns_0} f(y,t)|^q \, \frac{d\gm(y)}{V(y,t)} \, \frac{dt}{t} \right)^{p_\gq / q} \, \frac{dr}{r} \, d\gm(x) \\
		&\simeq \norm{f \mid (T_{s_0}^{p_0,q}, T_{s_1}^{p_1,q})_{\gq,p_\gq}}^{p_\gq}.
	\end{align*}
	This completes the proof of Theorem \ref{rint}.
\end{proof}

\begin{rmk}\label{identn-rmk}
	Note that this argument shows that 
	\begin{equation*}
		\norm{ (r^{-nk\gq(s_1 - s_0)} \norm{f_{r^{-k},r^{-k+1}}}_{T^{p_\gq,q}_{s_0}})_{k \in \ZZ}}_{\ell^{p_\gq}(\ZZ)} \simeq \norm{ f }_{Z^{p_\gq,q}_{s_\gq}(X;c_0,c_1)}
	\end{equation*}
	whenever $X$ is AD-regular of dimension $n$ and unbounded, for all $p_0,p_1 \in (0,\infty)$, $c_0 \in (0,\infty)$, and $c_1 \in (1,\infty)$.
	Therefore, since Theorem \ref{gilbcor} also holds for this range of exponents, to establish the identification \eqref{eqn:rintidentif} for $p_0,p_1 \in (0,\infty)$ it suffices to extend Proposition \ref{rintprop} to $p_0,p_1 \in (0,\infty)$.
	We will do this in the next section in the Euclidean case.
\end{rmk}

\subsection{Real interpolation: the non-reflexive range}

In this section we prove the following extension of Theorem \ref{rint}.
In what follows, we always consider $\RR^n$ as a metric measure space with the Euclidean distance and Lebesgue measure.
Throughout this section we use the real interpolation method for quasi-normed Abelian groups, as described in \cite[\textsection 3.11]{BL76}.

\begin{thm}\label{rint2}
	Suppose that $p_0, p_1 \in (0,\infty)$, $q \in [1,\infty)$, $s_0 \neq s_1 \in \RR$, and $\gq \in (0,1)$.
	Then for any $c_0 \in (0,\infty)$ and $c_1 \in (1,\infty)$ we have the identification
	\begin{equation}\label{eqn:rintidentif}
		(T^{p_0,q}_{s_0}(\RR^n),T^{p_1,q}_{s_1}(\RR^n))_{\gq,p_\gq} = Z^{p_\gq,q}_{s_\gq}(\RR^n;c_0,c_1)
	\end{equation}
	with equivalent quasi-norms, where $p_\gq^{-1} = (1-\gq)p_0^{-1} + \gq p_1^{-1}$ and $s_\gq = (1-\gq)s_0 + \gq s_1$.
\end{thm}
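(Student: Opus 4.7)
The starting point is to invoke \emph{Remark \ref{identn-rmk}}: once Proposition \ref{rintprop} is extended to $p_0,p_1 \in (0,\infty)$ in the Euclidean setting (with $q \geq 1$), the identification \eqref{eqn:rintidentif} will follow by the very same chain of calculations as in the proof of Theorem \ref{rint}. Indeed, Theorem \ref{gilbcor} is already stated and proved for all $p_0, p_1 \in (0,\infty)$, and Remark \ref{identn-rmk} confirms that the equivalence between the discrete characterisation \eqref{seqnorm} and the $Z$-space quasi-norm persists throughout the quasi-Banach range. Thus the whole problem reduces to establishing the single norm equivalence
\[
\norm{f \mid (T^{p_0,q}_{s_0}(\RR^n), T^{p_1,q}_{s_1}(\RR^n))_{\gq,p_\gq}} \simeq \norm{x \mapsto \norm{\mb{1}_{\gG(x)} f \mid (L^q_{s_0}((\RR^n)^+), L^q_{s_1}((\RR^n)^+))_{\gq,p_\gq}}}_{L^{p_\gq}(\RR^n)}
\]
for $p_0, p_1 \in (0,\infty)$ and $q \geq 1$.

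My plan is to run the argument of Proposition \ref{rintprop} verbatim, interpreting everything inside the framework of real interpolation of quasi-normed Abelian groups \cite[\textsection 3.11]{BL76}. The map $H : T^{p,q}_s(\RR^n) \to L^p(\RR^n : L^q_s((\RR^n)^+))$ given by $Hf(x) = \mb{1}_{\gG(x)} f$ is an isometry for every $p, q \in (0,\infty)$ and $s \in \RR$ with no integrability restriction. The projection from $L^p(\RR^n : L^q_s((\RR^n)^+))$ onto $H(T^{p,q}_s)$ constructed in \cite{aA14} is an explicit averaging operator whose boundedness is insensitive to $p$ and whose formula does not depend on $(p,s)$; in particular the same operator projects in each of the two endpoint spaces. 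The retraction-coretraction principle is valid for quasi-Banach couples, so this will let me identify the real interpolant of the tent spaces isometrically (up to equivalence of quasi-norms) with the image under $H$ of the real interpolant of the ambient vector-valued Lebesgue spaces.

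The main obstacle is the Lions--Peetre-type identification
\[
(L^{p_0}(\RR^n : A_0), L^{p_1}(\RR^n : A_1))_{\gq,p_\gq} = L^{p_\gq}(\RR^n : (A_0,A_1)_{\gq,p_\gq})
\]
for $p_0, p_1 \in (0,\infty)$, with $A_0 = L^q_{s_0}((\RR^n)^+)$ and $A_1 = L^q_{s_1}((\RR^n)^+)$. The assumption $q \geq 1$ guarantees that $A_0$ and $A_1$ are genuine Banach spaces, which is what lets the classical K-functional proof extend to the quasi-Banach exponent range: one computes $K(t, Hf)$ pointwise in $x$ by using optimal splittings of $f(x) \in A_0 + A_1$, and then reassembles in $L^{p_\gq}$ via the scalar $(L^{p_0}, L^{p_1})_{\gq, p_\gq} = L^{p_\gq}$ identity (which holds for all $p_0, p_1 > 0$). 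The Euclidean hypothesis is used here to avoid measurability pathologies and to invoke the Pisier-type extension \cite[Remark 7]{gP93} directly.

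Once Proposition \ref{rintprop} is extended to this range, the remaining proof of Theorem \ref{rint2} is carried out by repeating the explicit Whitney-region calculations from the proof of Theorem \ref{rint} verbatim: the upper bound uses the containment $B(x, c_0 t/c_1) \times (c_1^{k-1}, c_1^k) \subset \gO_{c_0, c_1}(x, r)$ for $r \in (c_1^{k-1}, c_1^k)$, and the lower bound uses the reverse containment $\gO_{c_0, c_1}(x, t) \subset B(x, 2c_0 t) \times (c_1^{-1} 2^{k-1}, c_1 2^k)$ together with the change-of-aperture theorem \cite[Proposition 3.21]{aA14}, which is valid for all $p \in (0,\infty)$. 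This yields the two-sided equivalence between the interpolation quasi-norm and $\norm{f}_{Z^{p_\gq, q}_{s_\gq}(\RR^n; c_0, c_1)}$ and completes the argument.
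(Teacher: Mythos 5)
Your plan is to re-run the Banach-range argument of Proposition~\ref{rintprop} verbatim in the quasi-Banach setting, relying on (a) the projection onto $H(T^{p,q}_s)$ remaining bounded on $L^p(\RR^n : L^q_s)$ for $p<1$, and (b) a Lions--Peetre identification $(L^{p_0}(A_0),L^{p_1}(A_1))_{\gq,p_\gq}=L^{p_\gq}((A_0,A_1)_{\gq,p_\gq})$ for $p_0,p_1\in(0,\infty)$. Both of these are unjustified, and the paper says so explicitly: the discussion following the statement of Theorem~\ref{rint2} notes that ``vector-valued Bochner space techniques are not available to us, as we would need to use quasi-Banach valued $L^p$ spaces with $p<1$, and such a theory is not well-developed,'' and that ``although the weighted tent spaces $T^{p,q}_s$ embed isometrically into $L^p(X:L^q_s(X^+))$ in this range of exponents, their image may not be complemented.'' Your claim that the projection's boundedness is ``insensitive to $p$'' is an assertion, not an argument, and it is precisely the thing that fails: the averaging/projection operator is of Hardy--Littlewood type and is not expected to be bounded for $p\leq 1$. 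Without complementation, the retraction--coretraction principle is unavailable, and you cannot pass from interpolants of the ambient Bochner spaces to interpolants of the tent spaces. Your proposal therefore has no reverse inequality.

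What the paper actually does is asymmetric. It proves only the one-sided estimate of Proposition~\ref{rintprop2}: after reducing via the power theorem (Theorem~\ref{powerthm}) to an $(\cdot,\cdot)_{\cdot,1}$-interpolation of the ``power spaces'' $(T^{p_j,q}_{s_j})^{p_j}$, one inserts Lemma~\ref{Klem} and uses the elementary inequality $\inf\int \geq \int\inf$ to move the infimum over decompositions inside the integral over $x\in X$. This yields $(T^{p_0,q}_{s_0},T^{p_1,q}_{s_1})_{\gq,p_\gq}\hookrightarrow Z^{p_\gq,q}_{s_\gq}$ (Corollary~\ref{gilbcor2}) but not the reverse, because $\int\inf\geq\inf\int$ is exactly where the missing complementation would be needed. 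The reverse embedding is then obtained by a completely different route: the dyadic characterisation of $Z$-spaces (Proposition~\ref{dyadic-characterisation}), the embedding $Z^{p,q}_s(\RR^n)\hookrightarrow T^{p,q}_s(\RR^n)$ for $p\leq q$ (Corollary~\ref{cor:ZTemb}), and the Barton--Mayboroda real-interpolation result for $Z$-spaces (Proposition~\ref{Z-rint}), combined with reiteration and the already-proved Banach-range Theorem~\ref{rint}. You would need to supply this second half, or some genuine substitute for it, rather than appeal to a vector-valued interpolation theorem that is not available in this range.
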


The main difficulty here is that vector-valued Bochner space techniques are not available to us, as we would need to use quasi-Banach valued $L^p$ spaces with $p < 1$, and such a theory is not well-developed.
Furthermore, although the weighted tent spaces $T^{p,q}_s$ embed isometrically into $L^p(X : L_s^q(X^+))$ in this range of exponents, their image may not be complemented, and so we cannot easily identify interpolants of their images.\footnote{Harboure, Torrea, and Viviani \cite{HTV91} avoid this problem by embedding $T^1$ into a vector-valued Hardy space $H^1$. If we were to extend this argument we would need identifications of quasi-Banach real interpolants of certain vector-valued Hardy spaces $H^p$ for $p \leq 1$, which is very uncertain terrain (see Blasco and Xu \cite{BX91}).}
We must argue directly.

First we recall the so-called `power theorem' \cite[Theorem 3.11.6]{BL76}, which allows us to exploit the convexity relations between weighted tent spaces.
If $A$ is a quasi-Banach space with quasi-norm $\norm{\cdot}$ and if $\gr > 0$, then $\norm{\cdot}^\gr$ is a quasi-norm on $A$ in the sense of \cite[page 59]{BL76}, and we denote the resulting quasi-normed Abelian group by $A^\gr$.

\begin{thm}[Power theorem]\label{powerthm}
	Let $(A_0,A_1)$ be a compatible couple of quasi-Banach spaces.
	Let $\gr_0, \gr_1 \in (0,\infty)$, $\gh \in (0,1)$, and $r \in (0,\infty]$, and define $\gr := (1-\gh)\gr_0 + \gh \gr_1$, $\gq := \gh \gr_1 / \gr$, and $\gs := r \gr$.
	Then we have
	\begin{equation*}
		((A_0)^{\gr_0}, (A_1)^{\gr_1})_{\gh,r} = ((A_0,A_1)_{\gq,\gs})^\gr
	\end{equation*}
	with equivalent quasi-norms.
\end{thm}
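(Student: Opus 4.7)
The plan is to argue via the K-functional characterization of the real interpolation method, which extends to quasi-normed Abelian groups as developed in \cite[\textsection 3.11]{BL76}. Write $K(t,a) := K(t,a;A_0,A_1)$ for the standard K-functional and $K_\gr(t,a) := K(t,a;A_0^{\gr_0}, A_1^{\gr_1})$, which is the infimum of $\norm{a_0}_{A_0}^{\gr_0} + t\norm{a_1}_{A_1}^{\gr_1}$ over all decompositions $a = a_0 + a_1$. It then suffices to establish a sufficiently sharp pointwise relation between $K_\gr(t,a)$ and $K(\cdot,a)$ and insert it into the integral formula defining both interpolation quasi-norms.

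As a warm-up consider $\gr_0 = \gr_1 = \gr$. Then the elementary inequalities $u^\gr + w^\gr \simeq (u+w)^\gr$ (with constants depending on $\gr$) applied with $w = t^{1/\gr} \norm{a_1}_{A_1}$ give $K_\gr(t,a) \simeq K(t^{1/\gr}, a)^\gr$. Substituting $s = t^{1/\gr}$ into the defining integral and tracking the Jacobian yields directly $\norm{a \mid (A_0^\gr, A_1^\gr)_{\gh,r}} \simeq \norm{a \mid ((A_0,A_1)_{\gh,\gr r})^\gr}$, which matches the stated parameters $\gq = \gh$ and $\gs = \gr r$. For general $\gr_0, \gr_1$, the analogous equivalence is $K_\gr(t,a) \simeq K(\gf(t),a)^{\gr_0}$, where $\gf(t)$ is determined implicitly by the balance condition $t\,\gf(t)^{-\gr_1} K(\gf(t),a)^{\gr_1 - \gr_0} \simeq 1$. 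This arises from optimizing $x^{\gr_0} + t y^{\gr_1}$ over decompositions with $x = \norm{a_0}_{A_0}$ and $y = \norm{a_1}_{A_1}$, using that the minimum occurs (up to constants) when the two summands are comparable.

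With this equivalence in hand, substituting $s = \gf(t)$ into $\int_0^\infty (t^{-\gh} K_\gr(t,a))^r \, dt/t$ and tracking the Jacobian against the relations $\gr = (1-\gh)\gr_0 + \gh \gr_1$, $\gq = \gh\gr_1/\gr$, and $\gs = r\gr$ yields a constant multiple of $\int_0^\infty (s^{-\gq} K(s,a))^{\gs} \, ds/s$. Raising to the power $1/r$ and using $\gs/\gr = r$ produces the identification $((A_0)^{\gr_0}, (A_1)^{\gr_1})_{\gh,r} = ((A_0,A_1)_{\gq,\gs})^\gr$. The case $r = \infty$ is handled in parallel using the supremum formulation of the real interpolation quasi-norm.

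The main obstacle is the bookkeeping for $\gr_0 \neq \gr_1$: the substitution $\gf$ depends on $K(\cdot,a)$ itself, although $K(\cdot,a)$ is a concave non-decreasing function of $s$, so a careful dyadic discretization allows the change of variables to be justified rigorously without a literal appeal to the inverse function theorem. A secondary subtlety is that $A^\gr$ is only a quasi-normed Abelian group, with the non-standard scaling $\norm{\gl a}_{A^\gr} = |\gl|^\gr \norm{a}_A^\gr$, so all manipulations must respect the framework of \cite[\textsection 3.11]{BL76}; in particular, one should check that the K-functional of the power couple is a quasi-norm on the K-space and that its $L^r(dt/t)$-type integral behaves as expected under the substitution.
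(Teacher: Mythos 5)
The paper does not supply its own proof of this result: it is stated as a known theorem and attributed to \cite[Theorem 3.11.6]{BL76}, so there is no in-paper argument to compare against. Your proposal reconstructs, in outline, the standard Bergh--L\"ofstr\"om argument. The central claim $K\bigl(t,a;A_0^{\gr_0},A_1^{\gr_1}\bigr) \simeq K(\gf(t),a;A_0,A_1)^{\gr_0}$, with $\gf(t)$ determined by the balance $t\,\gf(t)^{-\gr_1}K(\gf(t),a)^{\gr_1-\gr_0}\simeq 1$, is correct: the upper bound uses the near-optimal decomposition for $K(\gf(t),a)$, and the lower bound follows because for any decomposition, either $\norm{a_0}\gtrsim K(\gf(t),a)$ or $\gf(t)\norm{a_1}\gtrsim K(\gf(t),a)$, and the balance relation converts either case into a lower bound of the form $K(\gf(t),a)^{\gr_0}$. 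Your exponent bookkeeping is also consistent: writing $\gy(s)=s^{\gr_1}K(s,a)^{\gr_0-\gr_1}$ for the inverse substitution, one has $\gy'/\gy\in[\min(\gr_0,\gr_1)/s,\max(\gr_0,\gr_1)/s]$ using $0\leq K'\leq K/s$, so $dt/t\simeq ds/s$; and $t^{-\gh}K(s)^{\gr_0}=s^{-\gh\gr_1}K(s)^{(1-\gh)\gr_0+\gh\gr_1}=\left(s^{-\gq}K(s)\right)^{\gr}$, which after raising to the $r$-th power matches the definition of $(A_0,A_1)_{\gq,\gs}$. What remains implicit in your write-up --- the rigorous justification of the change of variables (since $\gy$ depends on the element $a$) via dyadic discretization, the treatment of the endpoints where the balance condition may be unattainable, and the case $r=\infty$ --- are real but routine technicalities, and you flag them appropriately.
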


Before proving Theorem \ref{rint2} we must establish some technical lemmas.
Recall that we previously defined the spaces $L_s^q(X^+)$ in \eqref{wtlq}.

\begin{lem}\label{Klem}
	Suppose $x \in X$, $\ga \in (0,\infty)$, and let all other numerical parameters be as in the statement of Theorem \ref{rint2}.
	Then for all cylindrically supported $f \in L^0(X^+)$ we have
	\begin{align}\label{Klem1}
		K(\ga,\mb{1}_{\gG(x)}f &; L_{s_0}^q(X^+), L_{s_1}^q(X^+)) \nonumber \\
		&= \inf_{f = \gf_0 + \gf_1} \left( \mc{A}^q(V^{-s_0} \gf_0)(x) + \ga\mc{A}^q(V^{-s_1} \gf_1)(x) \right)
	\end{align}
	and
	\begin{align}\label{Klem2}
		K(\ga,\mb{1}_{\gG(x)}f &; L_{s_0}^q(X^+)^{p_0}, L_{s_1}^q(X^+)^{p_1}) \nonumber \\
		&= \inf_{f = \gf_0 + \gf_1} \left(\mc{A}^q(V^{-s_0} \gf_0)(x)^{p_0} + \ga \mc{A}^q(V^{-s_1} \gf_1)(x)^{p_1} \right)
	\end{align}
	where the infima are taken over all decompositions $f = \gf_0 + \gf_1$ in $L^0(X^+)$ with $\gf_0, \gf_1$ cylindrically supported.
\end{lem}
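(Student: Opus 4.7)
The key observation is that for any $g \in L^0(X^+)$, the definitions of $L^q_s(X^+)$ in \eqref{wtlq} and of $\mc{A}^q$ in \eqref{lusin} together give the pointwise identity
\[
  \norm{\mb{1}_{\gG(x)} g}_{L^q_s(X^+)} = \mc{A}^q(V^{-s} g)(x).
\]
Using this, the right-hand side of \eqref{Klem1} may be rewritten as
\[
  \inf_{f = \gf_0 + \gf_1} \bigl( \norm{\mb{1}_{\gG(x)} \gf_0}_{L^q_{s_0}(X^+)} + \ga \norm{\mb{1}_{\gG(x)} \gf_1}_{L^q_{s_1}(X^+)} \bigr),
\]
and the plan is to show this equals the K-functional on the left. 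One direction is immediate: for any admissible pair $(\gf_0,\gf_1)$, the pair $(\mb{1}_{\gG(x)} \gf_0, \mb{1}_{\gG(x)} \gf_1)$ lies in $L^q_{s_0} \times L^q_{s_1}$ and decomposes $\mb{1}_{\gG(x)} f$, hence dominates the K-functional.

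For the reverse inequality, let $\mb{1}_{\gG(x)} f = \gy_0 + \gy_1$ be any decomposition with $\gy_j \in L^q_{s_j}$, and fix a cylinder $C$ containing the essential support of $f$. The first step is to reduce to the case where both $\gy_j$ are essentially supported in $\gG(x) \cap C$: replacing $\gy_0$ by $\mb{1}_{\gG(x) \cap C} \gy_0$ and $\gy_1$ by $\mb{1}_{\gG(x)} f - \mb{1}_{\gG(x) \cap C} \gy_0$ preserves the sum, since $\mb{1}_{\gG(x)} f$ is itself essentially supported in $\gG(x) \cap C$; and both new norms are bounded by the originals, as pointwise cutoff can only decrease an $L^q_{s_j}$ norm. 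Once this reduction is in place, set $\gf_0 := \gy_0$ and $\gf_1 := f - \gy_0$. Both functions are essentially supported in $C$ (for $\gf_1$, this uses that $f$ and $\gy_0$ are), so they are cylindrically supported; and $\mb{1}_{\gG(x)}\gf_0 = \gy_0$ and $\mb{1}_{\gG(x)}\gf_1 = \mb{1}_{\gG(x)} f - \gy_0 = \gy_1$, where the first equality uses that $\gy_0$ is essentially supported in $\gG(x)$. Taking an infimum produces RHS $\leq$ LHS.

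The identity \eqref{Klem2} follows by the same argument with each $L^q_{s_j}$ norm replaced by its $p_j$-th power throughout, since the K-functional for $(L^q_{s_0}(X^+)^{p_0}, L^q_{s_1}(X^+)^{p_1})$ is by definition the infimum of $\norm{a_0}_{L^q_{s_0}}^{p_0} + \ga \norm{a_1}_{L^q_{s_1}}^{p_1}$ over decompositions, and the cylindrical-support reduction above is a pointwise cutoff that never increases an $L^q_{s_j}$ norm, hence never increases its $p_j$-th power either. I expect no genuine obstacle; the main point requiring care is the bookkeeping in the reduction step, which works precisely because $\mb{1}_{\gG(x)} f$ is itself essentially supported in the cylinder $C$.
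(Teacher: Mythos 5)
Your proof is correct and follows essentially the same route as the paper: both establish $\geq$ by pushing a decomposition of $f$ through $\mathbf{1}_{\Gamma(x)}$, and both establish $\leq$ by first truncating an arbitrary decomposition of $\mathbf{1}_{\Gamma(x)}f$ to $\Gamma(x)\cap C$ (a norm-decreasing operation) and then extending it to a cylindrically supported decomposition of $f$. The paper extends by patching with an "arbitrary" decomposition of $f$ on $X^+\setminus\Gamma(x)$, whereas you make the explicit choice of putting all of $f\mathbf{1}_{X^+\setminus\Gamma(x)}$ into $\varphi_1$; this is the paper's construction with the template decomposition $f = 0 + f$, and it is if anything slightly cleaner since it avoids the question of whether the "arbitrary" template needs to be cylindrically supported.
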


\begin{proof}
	We will only prove the equality \eqref{Klem1}, as the proof of \eqref{Klem2} is essentially the same.
		
		Given a decomposition $f = \gf_0 + \gf_1$ in $L^0(X^+)$, we have a corresponding decomposition $\mb{1}_{\gG(x)} f = \mb{1}_{\gG(x)} \gf_0 + \mb{1}_{\gG(x)} \gf_1$, with $\norm{\mb{1}_{\gG(x)} \gf_0}_{L_{s_0}^q(X^+)} = \mc{A}^q(V^{-s_0}\gf_0)(x)$ and likewise for $\gf_1$.
		This shows that
		\begin{equation*}
			K(\ga,\mb{1}_{\gG(x)} f ; L_{s_0}^q(X^+), L_{s_1}^q (X^+)) \leq \inf_{f = \gf_0 + \gf_1} \left( \mc{A}^{q}(V^{-s_0}\gf_0)(x) + \ga\mc{A}^{q}(V^{-s_1}\gf_1)(x) \right).
		\end{equation*}
		
		For the reverse inequality, suppose that $\mb{1}_{\gG(x)} f = \gf_0 + \gf_1$ in $L^0(X^+)$, and suppose $f$ is essentially supported in a cylinder $C$.
		Multiplication by the characteristic function $\mb{1}_{\gG(x) \cap C}$ does not increase the quasi-norms of $\gf_0$ and $\gf_1$ in $L_{s_0}^q(X^+)$ and $L_{s_1}^q(X^+)$ respectively, so without loss of generality we can assume that $\gf_0$ and $\gf_1$ are cylindrically supported in $\gG(x)$.
		Now let $f = \gy_0 + \gy_1$ be an arbitrary decomposition in $L^0(X^+)$, and define
		\begin{align*}
			\wtd{\gy_0} &:= \mb{1}_{\gG(x)} \gf_0 + \mb{1}_{X^+ \sm \gG(x)} \gy_0, \\
			\wtd{\gy_1} &:= \mb{1}_{\gG(x)} \gf_1 + \mb{1}_{X^+ \sm \gG(x)} \gy_1.
		\end{align*}
		Then $f = \wtd{\gy_0} + \wtd{\gy_1}$ in $L^0(X^+)$, and we have
		\begin{equation*}
			\mc{A}^{q}(V^{-s_0} \wtd{\gy_0})(x) = \mc{A}^{q}(V^{-s_0} \gf_0)(x) = \norm{\mb{1}_{\gG(x)} \gf_0}_{L_{s_0}^q(X^+)}
		\end{equation*}
		and likewise for $\wtd{\gy_1}$.
		The conclusion follows from the definition of the $K$-functional.
\end{proof}

\begin{lem}\label{A-continuity}
	Suppose $f \in L_c^q(X^+)$.
	Then $\mc{A}^q f$ is continuous.
\end{lem}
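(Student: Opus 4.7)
The plan is to combine the lower semicontinuity already furnished by Lemma \ref{ACsemicontinuity} (which gives $\liminf_{x' \to x} \mc{A}^q f(x') \geq \mc{A}^q f(x)$ automatically) with a matching upper semicontinuity bound proven by dominated convergence. At any point $x$ where $\mc{A}^q f(x) = \infty$, lower semicontinuity forces $\mc{A}^q f(x') \to \infty$, so continuity to $[0,\infty]$ is free; I therefore reduce to showing $\limsup_{x' \to x} \mc{A}^q f(x') \leq \mc{A}^q f(x)$ at points where the right-hand side is finite.

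Writing
\[
\mc{A}^q f(x')^q = \iint_{X^+} \mb{1}_{\{d(x',y) < t\}} |f(y,t)|^q \, \frac{d\gm(y)}{V(y,t)} \, \frac{dt}{t},
\]
I will first observe pointwise convergence of the integrand: as $x' \to x$, the indicator $\mb{1}_{\{d(x',y)<t\}}$ tends to $\mb{1}_{\{d(x,y)<t\}}$ at every $(y,t)$ off the boundary set $E_x := \{(y,t) \in X^+ : d(x,y) = t\}$, and Fubini shows that $E_x$ has $d\gm(y)\,dt/t$-measure zero (each $y$-fibre is a single point in $t$). Next I need a majorant valid on a fixed neighbourhood $B(x,s_0)$ of $x$: since $\gG(x') \subset \{(y,t) : d(x,y) < t + s_0\}$ for such $x'$, and since $f$ is essentially supported in a cylinder $C = B(x_0, R) \times (a,b)$ with $0 < a < b < \infty$, the integrand is dominated uniformly in $x'$ by $\mb{1}_C(y,t) \, |f(y,t)|^q /(V(y,t)\,t)$.

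The main technical step is the integrability of this majorant. On the cylinder $C$ the factor $1/t$ is bounded by $1/a$, and the volume factor obeys $V(y,t) \geq V(y,a)$ for $t \geq a$; using the triangle inequality $B(y, R+a) \supset B(x_0, a)$ for $y \in B(x_0, R)$ together with the doubling/nondegeneracy framework in force for the tent-space theory, one obtains a uniform positive lower bound $V(y,a) \gtrsim V(x_0, a)$ on $B(x_0, R)$. Hence the majorant is bounded by a constant multiple of $\mb{1}_C |f|^q$, which is integrable by $f \in L^q(X^+)$. Dominated convergence now gives $\mc{A}^q f(x')^q \to \mc{A}^q f(x)^q$, closing the upper semicontinuity estimate and proving continuity. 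The only subtle point is this uniform lower bound on the volume on a cylinder; everything else is measure-theoretic bookkeeping.
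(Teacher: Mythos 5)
Your proof is correct and follows essentially the same dominated-convergence route as the paper: in both, the crux is the uniform volume lower bound $V(y,t) \gtrsim V(x_0,a)$ on the supporting cylinder (obtained via doubling; the paper cites the estimate \eqref{inftyclaim} from its Lemma \ref{loclem}), which yields an integrable majorant $\mb{1}_C\,|f|^q/(V(y,t)\,t)$. The paper applies dominated convergence to $|\mc{A}^q f(x)-\mc{A}^q f(z)|^q$ via the reverse triangle inequality rather than to $\mc{A}^q f(x')^q$ directly, and skips the lower-semicontinuity/$\infty$-case reduction since the uniform bound already shows $\mc{A}^q f$ is bounded; these differences are cosmetic.
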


\begin{proof}
	Let $f$ be essentially supported in the cylinder $C := B(c,r) \times (\gk_0,\gk_1)$.
	First, for all $x \in X$ we estimate
	\begin{align*}
		\mc{A}^q f(x) &\leq \left( \iint_C |f(y,t)|^q \, \frac{d\gm(y}{V(y,t)} \, \frac{dt}{t} \right)^{1/q} \\
		&\leq \left( \inf_{y \in B} V(y,\gk_0) \right)^{-1/q} \norm{f}_{L^q(X^+)} \\
		&\lesssim \norm{f}_{L^q(X^+)},
	\end{align*}
	using the estimate \eqref{inftyclaim} from the proof of Lemma \ref{loclem}.
	
	For all $x \in X$ we thus have
	\begin{align*}
		\lim_{z \to x} \left| \mc{A}^q f(x) - \mc{A}^q f(z) \right|
		&\leq \lim_{z \to x} \left( \iint_{X^+} |\mb{1}_{\gG(x)} - \mb{1}_{\gG(z)}| |f(y,t)|^q \, \frac{d\gm(y)}{V(y,t)} \, \frac{dt}{t}\right)^{1/q} = 0
	\end{align*}
	by dominated convergence, since $\mb{1}_{\gG(x)} - \mb{1}_{\gG(z)} \to 0$ pointwise as $z \to x$, and since
	\begin{align*}
		\left( \iint_{X^+} |\mb{1}_{\gG(x)} - \mb{1}_{\gG(z)}| |f(y,t)|^q \, \frac{d\gm(y)}{V(y,t)} \, \frac{dt}{t}\right)^{1/q} \leq \mc{A}^q f(x) + \mc{A}^q f(z) \lesssim \norm{f}_{L^q(X^+)}.
	\end{align*}
	Therefore $\mc{A}^q f$ is continuous.
\end{proof}

Having established these lemmas, we can prove the following (half-)extension of Proposition \ref{rintprop}.

\begin{prop}\label{rintprop2}
	Let all numerical parameters be as in the statement of Theorem \ref{rint2}.
	Then for all $f \in L^q_c(X^+)$ the function
	\begin{equation}\label{measurable-function}
		x \mapsto \norm{\mb{1}_{\gG(x)} f \mid (L_{s_0}^q(X^+), L_{s_1}^q(X^+))_{\gq,p_\gq}}
	\end{equation}
	is measurable on $X$ (using the discrete characterisation of the real interpolation quasi-norm), and we have
	\begin{equation}\label{eqn:rintprop2}
		\norm{f \mid (T_{s_0}^{p_0,q}, T_{s_1}^{p_1,q})_{\gq,p_\gq}} \gtrsim \norm{x \mapsto \norm{\mb{1}_{\gG(x)} f \mid (L_{s_0}^q(X^+), L_{s_1}^q(X^+))_{\gq,p_\gq}}}_{L^{p_\gq}(X)}.
	\end{equation}
	We denote the quantity on the right hand side of \eqref{eqn:rintprop} by $\norm{f \mid I^{p_\gq, q}_{s_0,s_1,\gq}}$. 
\end{prop}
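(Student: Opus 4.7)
The measurability claim follows from a standard continuity-plus-separability argument. By the discrete $K$-functional characterization of the real interpolation quasi-norm, it suffices to show that $x \mapsto K(\ga, \mb{1}_{\gG(x)} f; L^q_{s_0}(X^+), L^q_{s_1}(X^+))$ is Borel measurable for each $\ga > 0$. By formula \eqref{Klem1}, this $K$-functional equals the infimum, over cylindrically supported decompositions $f = \gf_0 + \gf_1$, of the expression $\mc{A}^q(V^{-s_0} \gf_0)(x) + \ga \mc{A}^q(V^{-s_1} \gf_1)(x)$. Using separability of $L^q$ on any cylinder supporting $f$, we may restrict the infimum to a countable dense family of such decompositions. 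Each expression in the infimum is continuous in $x$ by Lemma \ref{A-continuity}, so the $K$-functional is a countable infimum of continuous functions, hence upper semicontinuous and Borel measurable.

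For the inequality \eqref{eqn:rintprop2}, the key idea is to use the power theorem (Theorem \ref{powerthm}) to convert the $p_\gq$-th power of each real interpolation quasi-norm into an $L^1(\RR_+, dt/t)$-type integral of a $K$-functional for a power-quasi-normed couple, in which the exponents appear in a form compatible with formula \eqref{Klem2}. A direct calculation with the parameters in Theorem \ref{powerthm} shows that, setting $\gh := \gq p_\gq / p_1 \in (0,1)$,
\begin{equation*}
	\big( (A_0, A_1)_{\gq, p_\gq} \big)^{p_\gq} = \big( (A_0)^{p_0}, (A_1)^{p_1} \big)_{\gh, 1}
\end{equation*}
for any quasi-Banach couple $(A_0, A_1)$. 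Applying this identity to both of the couples $(L^q_{s_0}(X^+), L^q_{s_1}(X^+))$ and $(T^{p_0,q}_{s_0}, T^{p_1,q}_{s_1})$, and invoking Fubini--Tonelli on the non-negative joint integrand in $(x,t)$, reduces \eqref{eqn:rintprop2} to the pointwise bound
\begin{equation*}
	\int_X K\big(t, \mb{1}_{\gG(x)} f; (L^q_{s_0}(X^+))^{p_0}, (L^q_{s_1}(X^+))^{p_1}\big) \, d\gm(x) \leq K\big(t, f; (T^{p_0,q}_{s_0})^{p_0}, (T^{p_1,q}_{s_1})^{p_1}\big)
\end{equation*}
for each $t > 0$. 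This bound follows immediately from formula \eqref{Klem2}: for any cylindrically supported decomposition $f = \gf_0 + \gf_1$, integrating $\mc{A}^q(V^{-s_0} \gf_0)(x)^{p_0} + t \, \mc{A}^q(V^{-s_1} \gf_1)(x)^{p_1}$ in $x$ yields exactly $\norm{\gf_0}_{T^{p_0,q}_{s_0}}^{p_0} + t \norm{\gf_1}_{T^{p_1,q}_{s_1}}^{p_1}$, and the integral of an infimum is at most the infimum of the corresponding integrals.

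The main conceptual obstacle is recognising that the power theorem, combined with the power-quasi-norm formulation of the $K$-functional in formula \eqref{Klem2}---which appears to have been set up precisely to enable this kind of manipulation---provides a clean way to match the exponents on the two sides of \eqref{eqn:rintprop2}, thereby avoiding the usual complications of vector-valued real interpolation with differing integrability indices $p_0 \neq p_1$.
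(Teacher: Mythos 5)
Your proof is correct and follows essentially the same route as the paper: apply the power theorem to pass to the couple $\big((T_{s_0}^{p_0,q})^{p_0},(T_{s_1}^{p_1,q})^{p_1}\big)_{\gq p_\gq/p_1,1}$, then exchange the infimum defining the $K$-functional with the integral over $X$ via Lemma \ref{Klem2}, and apply the power theorem once more inside the $x$-integral. The only cosmetic differences are that you use the continuous $K$-functional characterisation with an explicit Fubini--Tonelli step where the paper works with the discrete characterisation inline, and in the measurability argument the reduction to a countable dense family of decompositions is superfluous, since the infimum of an arbitrary family of continuous functions is already upper semicontinuous.
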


\begin{proof}
First we take care of measurability.
Using Lemma \ref{Klem}, for $x \in X$ we write
\begin{align*}
	&\norm{\mb{1}_{\gG(x)} f \mid (L_{s_0}^q(X^+), L_{s_1}^q(X^+))_{\gq,p_\gq}}^{p_\gq} \\
	&= \sum_{k \in \ZZ} 2^{-kp_\gq \gq} K\left(2^k, \mb{1}_{\gG(x)} f ; L_{s_0}^q(X^+), L_{s_1}^q(X^+)\right)^{p_\gq} \\
	&= \sum_{k \in \ZZ} 2^{-kp_\gq \gq} \inf_{f = \gf_0 + \gf_1} \left( \mc{A}^q(V^{-s_0}\gf_0)(x) + 2^k \mc{A}^q(V^{-s_1}\gf_1)(x)\right)^{p_\gq}
\end{align*}
where the infima are taken over all decompositions $f = \gf_0 + \gf_1$ in $L^0(X^+)$ with $\gf_0 \in L_{s_0}^q(X^+)$ and $\gf_1 \in L_{s_1}^q(X^+)$ cylindrically supported.
By Lemma \ref{A-continuity}, we have that $\mc{A}^q(V^{-s_0}\gf_0)$ and $\mc{A}^q(V^{-s_1}\gf_1)$ are continuous.
Hence for each $k \in \ZZ$ and for every such decomposition $f = \gf_0 + \gf_1$ the function $x \mapsto \mc{A}^q(V^{-s_0} \gf_0)(x) + 2^k \mc{A}^q(V^{-s_1} \gf_1)(x)$ is continuous.
The infimum of these functions is then upper semicontinuous, therefore measurable.

Next, before beginning the proof of the estimate \eqref{eqn:rintprop2}, we apply the power theorem with $A_0 = T_{s_0}^{p_0,q}$, $A_1 = T_{s_1}^{p_1,q}$, $\gr_0 = p_0$, $\gr_1 = p_1$, and $\gs = p_\gq$.
Then we have $\gr = p_\gq$, $\gh = \gq p_\gq / p_1$, $r = 1$, and the relation $p_\gq = (1-\gh)p_0 + \gh p_1$ is satisfied.
We conclude that
\begin{equation*}
	((T^{p_0,q}_{s_0}, T^{p_1,q}_{s_1})_{\gq,p_\gq})^{p_\gq} \simeq ((T_{s_0}^{p_0,q})^{p_0}, (T_{s_1}^{p_1,q})^{p_1})_{\gq p_\gq / p_1,1}.
\end{equation*}
Thus it suffices for us to prove
\begin{equation}\label{powerreduction}
	\norm{f \mid ((T_{s_0}^{p_0,q})^{p_0}, (T_{s_1}^{p_1,q})^{p_1})_{\gq p_\gq / p_1,1}} \gtrsim \norm{f \mid I^{p_\gq,q}_{s_0,s_1,\gq}}^{p_\gq}
	\end{equation}
for all $f \in L_c^q(X^+)$.

We write
\begin{align}
	&\norm{f \mid ((T_{s_0}^{p_0,q})^{p_0},(T_{s_1}^{p_1,q})^{p_1})_{\gq p_\gq / p_1, 1}} \nonumber\\
	&= \sum_{k \in \ZZ} 2^{-k\gq p_\gq / p_1} K\left( 2^k, f ; (T_{s_0}^{p_0,q})^{p_0}, (T_{s_1}^{p_1,q})^{p_1} \right) \nonumber\\
	&= \sum_{k \in \ZZ} 2^{-k\gq p_\gq / p_1} \inf_{f = \gf_0 + \gf_1} \left( \norm{\gf_0}_{T^{p_0,q}_{s_0}}^{p_0} + 2^k \norm{\gf_1}_{T_{s_1}^{p_1,q}}^{p_1} \right) \nonumber\\
	&= \sum_{k \in \ZZ} 2^{-k\gq p_\gq / p_1} \inf_{f = \gf_0 + \gf_1} \int_X \mc{A}^q (V^{-s_0} \gf_0)(x)^{p_0} + 2^k \mc{A}^q (V^{-s_1} \gf_1)(x)^{p_1} \, d\gm(x) \nonumber\\
	&\geq \sum_{k \in \ZZ} 2^{-k\gq p_\gq / p_1} \int_X \inf_{f = \gf_0 + \gf_1} \left( \mc{A}^q(V^{-s_0} \gf_0)(x)^{p_0} + 2^k \mc{A}^q (V^{-s_1} \gf_1)(x)^{p_1} \right) \, d\gm(x) \nonumber\\
	&= \sum_{k \in \ZZ} 2^{-k\gq p_\gq / p_1} \int_X K\left(2^k, \mb{1}_{\gG(x)} f(x) ; L_{s_0}^q(X^+)^{p_0}, L_{s_1}^q(X^+)^{p_1}\right) \, d\gm(x) \label{lemline}\\
	&= \int_X \norm{\mb{1}_{\gG(x)} f \mid (L_{s_0}^q(X^+)^{p_0}, L_{s_1}^q(X^+)^{p_1})_{\gq p_\gq / p_1, 1}} \, d\gm(x) \nonumber\\
	&\simeq \int_X \norm{\mb{1}_{\gG(x)} f \mid (L_{s_0}^q(X^+), L_{s_1}^q(X^+))_{\gq, p_\gq}}^{p_\gq} \, d\gm(x) \label{powline}\\
	&= \norm{f \mid I_{s_0,s_1,\gq}^{p_\gq,q}}^{p_\gq} \nonumber
\end{align}
where again the infima are taken over cylindrically supported $\gf_0$ and $\gf_1$.
The equality \eqref{lemline} is due to Lemma \ref{Klem}.
The equivalence \eqref{powline} follows from the power theorem.
This completes the proof of Proposition \ref{rintprop2}.
\end{proof}

As a corollary, we obtain half of the desired interpolation result.
\begin{cor}\label{gilbcor2}
	Let all numerical parameters be as in the statement of Theorem \ref{rint2}, and suppose that $X$ is AD-regular of dimension $n$ and unbounded.
	Then
	\begin{equation}
		(T^{p_0,q}_{s_0}, T^{p_1,q}_{s_1})_{\gq,p_\gq} \hookrightarrow Z^{p_\gq,q}_{s_\gq}(X;c_0,c_1).
	\end{equation}
\end{cor}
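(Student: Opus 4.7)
The plan is to combine Proposition \ref{rintprop2} with the equivalences of Theorem \ref{gilbcor} and Remark \ref{identn-rmk} to establish the embedding inequality on the dense subspace $L^q_c(X^+)$, and then to extend it to the full interpolation space via a density and Fatou argument.

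For $f \in L^q_c(X^+)$, Proposition \ref{rintprop2} yields
\begin{equation*}
\norm{f \mid (T^{p_0,q}_{s_0}, T^{p_1,q}_{s_1})_{\gq,p_\gq}} \gtrsim \norm{f \mid I^{p_\gq, q}_{s_0,s_1,\gq}}.
\end{equation*}
Since $X$ is AD-regular of dimension $n$ and unbounded, Theorem \ref{gilbcor} (which is stated for all $p_0,p_1 \in (0,\infty)$) identifies $\norm{f \mid I^{p_\gq, q}_{s_0,s_1,\gq}}$ with the sequence norm
\begin{equation*}
\norm{(r^{-nk\gq(s_1-s_0)} \norm{f_{r^{-k},r^{-k+1}}}_{T^{p_\gq,q}_{s_0}})_{k \in \ZZ}}_{\ell^{p_\gq}(\ZZ)},
\end{equation*}
and Remark \ref{identn-rmk} shows that this in turn is equivalent to $\norm{f}_{Z^{p_\gq,q}_{s_\gq}(X;c_0,c_1)}$. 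Chaining these equivalences gives $\norm{f}_{Z^{p_\gq,q}_{s_\gq}(X;c_0,c_1)} \lesssim \norm{f \mid (T^{p_0,q}_{s_0}, T^{p_1,q}_{s_1})_{\gq,p_\gq}}$ for every $f \in L^q_c(X^+)$.

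To pass to the full interpolation space I would use density: AD-regularity implies doubling, so by Proposition \ref{density-completeness}, $L^q_c(X^+)$ is dense in each $T^{p_j,q}_{s_j}$, and the simultaneous magnitude-and-cylinder truncation $f_k := \mb{1}_{\{|f| \leq k\}} \mb{1}_{B(x_0,k) \times (k^{-1},k)} f$ approximates any element of $T^{p_0,q}_{s_0} \cap T^{p_1,q}_{s_1}$ in both tent-space norms by dominated convergence, so $L^q_c(X^+)$ is dense in the intersection and, since $p_\gq < \infty$, in $(T^{p_0,q}_{s_0}, T^{p_1,q}_{s_1})_{\gq,p_\gq}$. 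Given $f$ in the interpolation space, take $f_n \in L^q_c$ with $f_n \to f$ in the interpolation norm; this forces convergence in $T^{p_0,q}_{s_0} + T^{p_1,q}_{s_1}$, and a summable-tail decomposition combined with a.e.\ convergence of Lusin functionals produces a subsequence with $f_n \to f$ pointwise a.e.\ on $X^+$. A double application of Fatou's lemma---inside the $L^q$-Whitney average and then in the outer $L^{p_\gq}$ integral---gives
\begin{equation*}
\norm{f}_{Z^{p_\gq,q}_{s_\gq}(X;c_0,c_1)} \leq \liminf_{n \to \infty} \norm{f_n}_{Z^{p_\gq,q}_{s_\gq}(X;c_0,c_1)} \lesssim \norm{f \mid (T^{p_0,q}_{s_0}, T^{p_1,q}_{s_1})_{\gq,p_\gq}},
\end{equation*}
which completes the embedding. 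The main obstacle in this plan lies in the simultaneous approximation and a.e.\ extraction on $X^+$; all other steps are essentially formal once Proposition \ref{rintprop2} and Theorem \ref{gilbcor} are in hand.
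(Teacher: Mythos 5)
Your proposal is correct and follows essentially the same route as the paper: combine Proposition \ref{rintprop2} with Theorem \ref{gilbcor} and Remark \ref{identn-rmk} on the dense subspace $L^q_c(X^+)$, then pass to the full interpolation space by density. The paper's own proof is stated more tersely (it cites only Theorem \ref{gilbcor}, Remark \ref{identn-rmk}, and density, leaving the use of Proposition \ref{rintprop2} and the Fatou-type limit step implicit), so your version fills in those details but is the same argument.
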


\begin{proof}
	This follows from Theorem \ref{gilbcor}, Remark \ref{identn-rmk}, and the density of $L_c^q(X^+)$ in $(T^{p_0,q}_{s_0}, T^{p_1,q}_{s_1})_{\gq,p_\gq}$ (which follows from the fact that $L_c^q(X^+)$ is dense in both $T^{p_0,q}_{s_0}$ and $T^{p_1,q}_{s_1}$, which is due to Lemma \ref{density-completeness}).
\end{proof}

We now prove the reverse containment in the Euclidean case.
This rests on a dyadic characterisation of the spaces $Z_s^{p,q}(\RR^n;c_0,c_1)$.
A \emph{standard (open) dyadic cube} is a set $Q \subset \RR^n$ of the form
\begin{equation}\label{cube}
	Q = \prod_{i=1}^n (2^k x_i, 2^k(x_i + 1))
\end{equation}
for some $k \in \ZZ$ and $x \in \ZZ^n$.
For $Q$ of the form \eqref{cube} we set $\ell(Q) := 2^k$ (the sidelength of $Q$), and we denote the set of all standard dyadic cubes by $\mc{D}$.
For every $Q \in \mc{D}$ we define the associated \emph{Whitney cube}
\begin{equation*}
	\overline{Q} := Q \times (\ell(Q), 2\ell(Q)),
\end{equation*}
and we define $\mc{G} := \{\overline{Q} : Q \in \mc{D}\}$.
We write $\RR^{n+1}_+ := (\RR^n)^+ = \RR^n \times (0,\infty)$.
Note that $\mc{G}$ is a partition of $\RR^{n+1}_+$ up to a set of measure zero.

The following proposition is proven by a simple covering argument.
\begin{prop}\label{dyadic-characterisation}
	Let $p,q \in (0,\infty)$, $s \in \RR$, $c_0 > 0$ and $c_1 > 1$.
	Then for all $f \in L^0(\RR^{n+1}_+)$,
	\begin{equation*}
		\norm{f}_{Z_s^{p,q}(\RR^n ; c_0,c_1)} \simeq_{c_0,c_1} \left( \sum_{\overline{Q} \in \mc{G}} \ell(Q)^{n(1-ps)} [|f|^q]^{p/q}_{\overline{Q}} \right)^{1/p},
	\end{equation*}
	where
	\begin{equation*}
		[|f|^q]_{\overline{Q}} := \bariint_{\overline{Q}}^{} |f(y,t)|^q \, dy \, dt.
	\end{equation*}
\end{prop}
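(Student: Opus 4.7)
The plan is to exploit the fact that the Whitney regions $\gO_{c_0,c_1}(x,t)$ and the dyadic Whitney cubes $\overline Q$ are both ``Whitney-type'' boxes of dimensions comparable to $t$ (respectively $\ell(Q)$), so that both quantities can be rewritten as integrals, over $\RR^{n+1}_+$, of averages on such boxes, and then compared via a covering argument and Fubini.

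First I would expand the definitions. In the Euclidean setting $V(x,t)\simeq t^n$, so
\begin{equation*}
    \norm{f}_{Z^{p,q}_s(\RR^n;c_0,c_1)}^p \simeq \int_0^\infty \int_{\RR^n} t^{-nsp}\left(\bariint_{\gO_{c_0,c_1}(x,t)} |f|^q\right)^{p/q} dx\,\frac{dt}{t}.
\end{equation*}
Observe next that for fixed $(x,t)$ the Whitney cubes $\overline Q\in\mc{G}$ that meet $\gO_{c_0,c_1}(x,t)=B(x,c_0 t)\times(c_1^{-1}t,c_1 t)$ must satisfy $\ell(Q)\simeq_{c_1} t$ (since the time window $(\ell(Q),2\ell(Q))$ must intersect $(c_1^{-1}t,c_1 t)$), and their number is bounded by a constant $N=N(c_0,c_1)$. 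Because each such $\overline Q$ has $|\overline Q|\simeq_{c_0,c_1} t^{n+1}\simeq_{c_0,c_1}|\gO_{c_0,c_1}(x,t)|$, the average on $\gO_{c_0,c_1}(x,t)$ is comparable to $\sum_{\overline Q\cap\gO_{c_0,c_1}(x,t)\neq\emptyset}[|f|^q]_{\overline Q}$; raising to the $p/q$ power (the number of terms is bounded by $N$, so $(\sum a_i)^{p/q}\simeq_N\sum a_i^{p/q}$ for nonnegative $a_i$) gives
\begin{equation*}
    \left(\bariint_{\gO_{c_0,c_1}(x,t)}|f|^q\right)^{p/q}\simeq_{c_0,c_1}\sum_{\overline Q\,:\,\overline Q\cap\gO_{c_0,c_1}(x,t)\neq\emptyset}[|f|^q]_{\overline Q}^{p/q}.
\end{equation*}

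Substituting this into the expression for $\norm{f}_{Z^{p,q}_s}^p$, interchanging sum and integral by Fubini--Tonelli, and using the symmetric observation that, for fixed $\overline Q$, the set of $(x,t)$ for which $\overline Q\cap\gO_{c_0,c_1}(x,t)\neq\emptyset$ is contained in and contains (up to constants) a product $B(x_Q, C\ell(Q))\times (c^{-1}\ell(Q), c\ell(Q))$, I would compute
\begin{equation*}
    \iint_{\RR^{n+1}_+} t^{-nsp}\,\mb{1}_{\{\overline Q\cap\gO_{c_0,c_1}(x,t)\neq\emptyset\}}\,dx\,\frac{dt}{t}\simeq_{c_0,c_1} \ell(Q)^n\cdot\ell(Q)^{-nsp}=\ell(Q)^{n(1-ps)},
\end{equation*}
the two factors coming from the $x$-integral (a ball of radius $\simeq\ell(Q)$) and the $t$-integral (a log-interval of length $\simeq 1$ near $t=\ell(Q)$). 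Combining these estimates yields
\begin{equation*}
    \norm{f}_{Z^{p,q}_s(\RR^n;c_0,c_1)}^p\simeq_{c_0,c_1}\sum_{\overline Q\in\mc{G}}\ell(Q)^{n(1-ps)}[|f|^q]_{\overline Q}^{p/q},
\end{equation*}
which is the claim. The only point that requires a little care is the bilateral covering estimate and the bookkeeping of constants depending on $c_0,c_1$; there is no substantial obstacle, since everything reduces to the standard fact that the Whitney cubes and the Whitney regions $\gO_{c_0,c_1}(x,t)$ live on the same Whitney scale.
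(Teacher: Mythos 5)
Your approach (compare the Whitney regions $\gO_{c_0,c_1}(x,t)$ with the dyadic Whitney cubes $\overline{Q}$ via a covering and Fubini--Tonelli) is the natural one, and the upper bound works as you describe. However, there is a genuine gap in the lower bound: the claimed pointwise two-sided comparison
\begin{equation*}
\bariint_{\gO_{c_0,c_1}(x,t)}^{}|f|^q\simeq_{c_0,c_1}\sum_{\overline{Q}\,:\,\overline{Q}\cap\gO_{c_0,c_1}(x,t)\neq\varnothing}[|f|^q]_{\overline{Q}}
\end{equation*}
is false. The inequality $\lesssim$ holds pointwise, since the finitely many cubes $\overline{Q}$ meeting $\gO_{c_0,c_1}(x,t)$ cover it and each has measure comparable to $|\gO_{c_0,c_1}(x,t)|$. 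But the reverse pointwise inequality fails: take $f$ supported in $\overline{Q}\setminus\gO_{c_0,c_1}(x,t)$ for a cube $\overline{Q}$ that merely touches $\gO_{c_0,c_1}(x,t)$; the left side vanishes while the right side does not. This is not only a boundary effect --- when $c_0$ is small or $c_1$ is close to $1$, the region $\gO_{c_0,c_1}(x,t)$ occupies an arbitrarily small fraction of each cube it meets. Consequently your substitution into the integral only yields the one-sided estimate $\norm{f}_{Z^{p,q}_s(\RR^n;c_0,c_1)}\lesssim_{c_0,c_1}\big(\sum_{\overline{Q}}\ell(Q)^{n(1-ps)}[|f|^q]^{p/q}_{\overline{Q}}\big)^{1/p}$.

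For the reverse estimate the Fubini argument must be run starting from the discrete side. Writing $g=V^{-s}f$, one uses that for every $(y,\gt)\in\overline{Q}$ the set $\{(x,t)\in\overline{Q}:(y,\gt)\in\gO_{c_0,c_1}(x,t)\}$ has $dx\,dt/t$--measure $\gtrsim_{c_0,c_1}\ell(Q)^n$, so that Fubini--Tonelli gives $[|g|^q]_{\overline{Q}}\lesssim_{c_0,c_1}\bariint_{\overline{Q}}^{}(\mc W^q_{c_0,c_1}g)(x,t)^q\,dx\,dt/t$. When $p\geq q$, Jensen's inequality then yields $\ell(Q)^n[|g|^q]_{\overline{Q}}^{p/q}\lesssim_{c_0,c_1}\iint_{\overline{Q}}(\mc W^q_{c_0,c_1}g)^p\,dx\,dt/t$, and summing over $\overline{Q}$ gives the result. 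When $p<q$, Jensen goes the wrong way; instead cover $\overline{Q}$ by a bounded number $N(c_0,c_1)$ of Whitney regions $\gO_{c_0,c_1}(z_j,t_j)$ whose centres may be perturbed within sets of $dx\,dt/t$--measure $\simeq_{c_0,c_1}\ell(Q)^n$ contained in a fixed dilate of $\overline{Q}$, use $(\sum_j a_j)^{p/q}\leq\sum_j a_j^{p/q}$ (valid for $p/q<1$), average over the free centres, and invoke bounded overlap of the dilates. Either way, the two directions of the proposition require distinct covering arguments; they do not both follow from a single pointwise equivalence.
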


As a consequence, we gain a convenient embedding.

\begin{cor}\label{cor:ZTemb}
	Suppose $q \in (0,\infty)$, $p \in (0,q]$, and $s \in \RR$.
	Then
	\begin{equation*}
		Z_s^{p,q}(\RR^n) \hookrightarrow T_s^{p,q}(\RR^n).
	\end{equation*}
\end{cor}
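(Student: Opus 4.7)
The plan is to estimate the $T^{p,q}_s$-quasinorm directly from the dyadic characterisation of $Z^{p,q}_s(\RR^n)$ furnished by Proposition \ref{dyadic-characterisation}. The starting observation is that the Lusin cone $\gG(x)$ is well-approximated by the standard Whitney cubes $\overline{Q} \in \mc{G}$ that it meets, and that on each such $\overline{Q}$ the weight $V(y,t)^{-s} \simeq \ell(Q)^{-ns}$ and the Lusin density $V(y,t)^{-1} t^{-1} \simeq \ell(Q)^{-n-1}$ are essentially constant. Consequently the slice of the Lusin integral over $\overline{Q} \cap \gG(x)$ collapses to a constant multiple of $\ell(Q)^{-nqs}[|f|^q]_{\overline{Q}}$.

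Concretely, for each $x \in \RR^n$ I would first establish
\begin{equation*}
	\mc{A}^q(V^{-s}f)(x)^q \lesssim \sum_{\overline{Q} \in \mc{G}(x)} \ell(Q)^{-nqs}\, [|f|^q]_{\overline{Q}},
\end{equation*}
where $\mc{G}(x) := \{\overline{Q} \in \mc{G} : \overline{Q} \cap \gG(x) \neq \emptyset\}$. The hypothesis $p \leq q$ then enters through the subadditivity of $t \mapsto t^{p/q}$, which lets me pull the $p/q$-power inside the sum:
\begin{equation*}
	\mc{A}^q(V^{-s}f)(x)^p \lesssim \sum_{\overline{Q} \in \mc{G}(x)} \ell(Q)^{-nps}\, [|f|^q]_{\overline{Q}}^{p/q}.
\end{equation*}
Integrating in $x$ and applying Fubini yields
\begin{equation*}
	\norm{f}_{T^{p,q}_s}^p \lesssim \sum_{\overline{Q} \in \mc{G}} \ell(Q)^{-nps}\, [|f|^q]_{\overline{Q}}^{p/q}\, \bigl|\{x \in \RR^n : \overline{Q} \cap \gG(x) \neq \emptyset\}\bigr|,
\end{equation*}
and a quick geometric check --- the set in question is precisely $\{x : \dist(x,Q) < 2\ell(Q)\}$, hence a fixed bounded dilate of $Q$ --- shows that the last factor is $\simeq \ell(Q)^n$. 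Matching this against Proposition \ref{dyadic-characterisation} gives $\norm{f}_{T^{p,q}_s} \lesssim \norm{f}_{Z^{p,q}_s}$.

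The only place where the restriction $p \leq q$ is used is the subadditivity step; everything else is a Fubini-style rearrangement on the Whitney skeleton of $\RR^{n+1}_+$, together with the elementary volume comparisons above. I therefore do not anticipate any substantive obstacles, and the main point is simply to identify the correct dyadic intermediate and verify that the geometric bookkeeping matches the exponent $\ell(Q)^{n(1-ps)}$ appearing in the dyadic $Z$-norm.
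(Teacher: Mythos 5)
Your proposal is correct and follows essentially the same route as the paper's proof: decompose the Lusin integral over the Whitney cubes meeting $\gG(x)$, use the constancy of $t$ and of the weight on each $\overline{Q}$, apply the subadditivity of $t \mapsto t^{p/q}$ (which is where $p \leq q$ enters), and then interchange the sum and the integral in $x$ using the bound $|\{x : \gG(x) \cap \overline{Q} \neq \varnothing\}| \lesssim \ell(Q)^n$. The steps, including the geometric identification of that set as a fixed dilate of $Q$, match the paper's argument.
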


\begin{proof}
	We have
	\begin{align}
		\norm{f}_{T_s^{p,q}(\RR^n)}
		&\simeq \left( \int_{\RR^n} \left( \iint_{\gG(x)} |t^{-ns} f(y,t)|^q \, \frac{dy \, dt}{t^{n+1}} \right)^{p/q} \, dx \right)^{1/p} \nonumber \\
		&\leq \left( \int_{\RR^n} \left( \sum_{\overline{Q} \in \mc{G}} \mb{1}_{\overline{Q} \cap \gG(x) \neq \varnothing}(\overline{Q}) \iint_{\overline{Q}} |t^{-ns} f(y,t)|^q \, \frac{dy \, dt}{t^{n+1}} \right)^{p/q} \, dx \right)^{1/p} \nonumber \\
		&\simeq \left( \int_{\RR^n} \left( \sum_{\overline{Q} \in \mc{G}} \mb{1}_{\overline{Q} \cap \gG(x) \neq \varnothing}(\overline{Q}) \ell(Q)^{-nsq} [ |f|^q ]_{\overline{Q}} \right)^{p/q} \, dx \right)^{1/p} \nonumber \\ 
		&\leq \left( \int_{\RR^n} \sum_{\overline{Q} \in \mc{G}} \mb{1}_{\overline{Q} \cap \gG(x) \neq \varnothing} (\overline{Q}) \ell(Q)^{-nps} [|f|^q]_{\overline{Q}}^{p/q} \, dx \right)^{1/p} \label{line:p} \\
		&= \left( \sum_{\overline{Q} \in \mc{G}} \ell(Q)^{-nps} [|f|^q]_{\overline{Q}}^{p/q} |\{ x \in \RR^n : \gG(x) \cap \overline{Q} \neq \varnothing\}| \right)^{1/p} \nonumber \\
		&\lesssim \left( \sum_{\overline{Q} \in \mc{G}} \ell(Q)^{n(1-ps)}  [|f|^q]_{\overline{Q}}^{p/q} \right)^{1/p} \label{line:cube} \\
		&\simeq \norm{f}_{Z_s^{p,q}(X;c_0,c_1)} \nonumber,
	\end{align}
	where \eqref{line:p} follows from $p/q \leq 1$, \eqref{line:cube} follows from
	\begin{equation*}
		|\{x \in \RR^n : \gG(x) \cap \overline{Q} \neq \varnothing\}| = |B(Q,2\ell(Q))| \lesssim |Q| \simeq \ell(Q)^n,
	\end{equation*}
	and the last line follows from Proposition \ref{dyadic-characterisation}.
	This proves the claimed embedding.
\end{proof}

It has already been shown by Barton and Mayboroda that the $Z$-spaces form a real interpolation scale \cite[Theorem 4.13]{BM16}, in the following sense.
We will stop referring to the parameters $c_0$ and $c_1$, as Proposition \ref{dyadic-characterisation} implies that the associated quasi-norms are equivalent.

\begin{prop}\label{Z-rint}
	Suppose that all numerical parameters are as in the statement of Theorem \ref{rint2}. Then we have the identification
	\begin{equation*}
		(Z_{s_0}^{p_0,q}(\RR^n),Z_{s_1}^{p_1,q}(\RR^n))_{\gq,p_\gq} = Z_{s_\gq}^{p_\gq,q}(\RR^n).
	\end{equation*}
\end{prop}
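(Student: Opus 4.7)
The plan is to compute the $K$-functional directly via the dyadic characterisation of $Z$-spaces in Proposition \ref{dyadic-characterisation}, combined with the power theorem (Theorem \ref{powerthm}) and a cube-by-cube decomposition in the spirit of Proposition \ref{rintprop2}. Write $\ga_Q := \ell(Q)^{n(1-p_0 s_0)}$, $\gb_Q := \ell(Q)^{n(1-p_1 s_1)}$, and $a_Q(f) := [|f|^q]_{\overline{Q}}^{1/q}$. Proposition \ref{dyadic-characterisation} then reads $\norm{f}_{Z_{s_j}^{p_j,q}(\RR^n)}^{p_j} \simeq \sum_{\overline{Q} \in \mc{G}} w_j(Q)\, a_Q(f)^{p_j}$ for $j=0,1$, with $w_0 := \ga$ and $w_1 := \gb$. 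Because $\mc{G}$ partitions $\RR^{n+1}_+$ up to a null set, any $f \in L^0(\RR^{n+1}_+)$ corresponds bijectively to its cube restrictions $(f|_{\overline{Q}})_{\overline{Q} \in \mc{G}}$, and any additive decomposition of $f$ is equivalent to a cube-by-cube decomposition.

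I apply Theorem \ref{powerthm} with $A_j = Z_{s_j}^{p_j,q}(\RR^n)$, $\gr_j = p_j$, and $\gs = p_\gq$; taking $\gh := \gq p_\gq / p_1$ gives $\gr = (1-\gh)p_0 + \gh p_1 = p_\gq$ and $r = 1$, so it suffices to show
\[
	\norm{f \mid \bigl((Z_{s_0}^{p_0,q}(\RR^n))^{p_0}, (Z_{s_1}^{p_1,q}(\RR^n))^{p_1}\bigr)_{\gh,1}} \simeq \norm{f}_{Z_{s_\gq}^{p_\gq,q}(\RR^n)}^{p_\gq}.
\]
The quasi-norm on $(Z_{s_j}^{p_j,q})^{p_j}$ is equivalent to the $\ell^1$-direct-sum quasi-norm $\sum_{\overline{Q}} w_j(Q) a_Q(\cdot)^{p_j}$, so the $K$-functional decomposes as a sum over cubes:
\[
	K\bigl(t, f; (Z_{s_0}^{p_0,q})^{p_0}, (Z_{s_1}^{p_1,q})^{p_1}\bigr) \simeq \sum_{\overline{Q} \in \mc{G}} \inf_{f|_{\overline{Q}} = g_Q + h_Q} \bigl(\ga_Q a_Q(g_Q)^{p_0} + t\, \gb_Q a_Q(h_Q)^{p_1}\bigr).
\]
On each cube both terms are powers of the \emph{same} $L^q$-norm, so writing $L_Q := a_Q(f|_{\overline{Q}})$ and restricting to convex decompositions $g_Q = \gl f|_{\overline{Q}}$, $h_Q = (1-\gl) f|_{\overline{Q}}$ with $\gl \in [0,1]$, a short monotonicity argument shows the local infimum is comparable to $\min(\ga_Q L_Q^{p_0}, t \gb_Q L_Q^{p_1})$.

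A direct computation of $\int_0^\infty t^{-\gh} \min(A, tB)\, dt/t \simeq_\gh A^{1-\gh} B^\gh$ then yields
\[
	\int_0^\infty t^{-\gh} \min\bigl(\ga_Q L_Q^{p_0}, t \gb_Q L_Q^{p_1}\bigr) \frac{dt}{t} \simeq_\gh \ga_Q^{1-\gh} \gb_Q^{\gh}\, L_Q^{p_\gq},
\]
using $(1-\gh)p_0 + \gh p_1 = p_\gq$, and summing over $\overline{Q} \in \mc{G}$ reduces the problem to identifying $\ga_Q^{1-\gh}\gb_Q^\gh$ with $\ell(Q)^{n(1-p_\gq s_\gq)}$. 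This amounts to the algebraic identities $(1-\gh)p_0 + \gh p_1 = p_\gq$ and $(1-\gh)p_0 s_0 + \gh p_1 s_1 = p_\gq s_\gq$, both routine consequences of the definitions of $p_\gq$, $s_\gq$, and $\gh$. Another application of Proposition \ref{dyadic-characterisation} matches the resulting sum with $\norm{f}_{Z_{s_\gq}^{p_\gq,q}}^{p_\gq}$, and the power theorem completes the proof. The main point requiring care is the algebraic identity for the weight, which couples $\gh$ and $\gq$ via the power theorem and ensures that the interpolated weight $\ga_Q^{1-\gh}\gb_Q^\gh$ produces the expected $Z_{s_\gq}^{p_\gq,q}$ regularity exponent; otherwise the argument follows the template of Proposition \ref{rintprop2}.
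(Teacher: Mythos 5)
The paper does not actually prove this proposition; it simply cites \cite[Theorem 4.13]{BM16}. Your argument is therefore a genuinely new, self-contained proof, and it is correct. The strategy---reduce via the power theorem (Theorem \ref{powerthm}) with $\gr_j = p_j$, $r = 1$, $\gh = \gq p_\gq/p_1$ to the couple $((Z_{s_0}^{p_0,q})^{p_0},(Z_{s_1}^{p_1,q})^{p_1})$, decompose the $K$-functional cube-by-cube using the dyadic characterisation of Proposition \ref{dyadic-characterisation} (the cubes $\overline{Q}$ partition $\RR^{n+1}_+$ up to a null set, so global decompositions $f = g + h$ are in bijection with cube-wise ones), and observe that the local $K$-functional is comparable to $\min(\ga_Q L_Q^{p_0}, t\gb_Q L_Q^{p_1})$ because both local quantities are positive powers of the one scalar $L_Q$---is a clean direct route, and it parallels the template of the paper's Proposition \ref{rintprop2} on the tent space side. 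The algebraic bookkeeping is also correct: from $\gh p_1 = \gq p_\gq$ one gets $(1-\gh)p_0 = (1-\gq)p_\gq$, whence $(1-\gh)p_0 + \gh p_1 = p_\gq$ and $(1-\gh)p_0 s_0 + \gh p_1 s_1 = p_\gq s_\gq$, so $\ga_Q^{1-\gh}\gb_Q^\gh = \ell(Q)^{n(1 - p_\gq s_\gq)}$ as needed. One point worth spelling out: the lower bound in the local-minimum comparison is not a restriction to convex splittings (which only yields the upper bound, via $\gl \in \{0,1\}$); rather, it uses that for any splitting $f|_{\overline{Q}} = g_Q + h_Q$ the triangle inequality in $L^q(\overline{Q})$ gives $L_Q \leq a_Q(g_Q) + a_Q(h_Q)$, hence $\max(a_Q(g_Q), a_Q(h_Q)) \geq L_Q/2$, and whichever term is large already dominates a constant times $\min(\ga_Q L_Q^{p_0}, t\gb_Q L_Q^{p_1})$. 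This relies on $q \geq 1$ (which is the standing assumption in Theorem \ref{rint2}); for $q < 1$ one would pick up a quasi-triangle constant, but the comparison would survive. With that clarification your argument is complete and would serve as a legitimate replacement for the external citation.
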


Now we know enough to complete the proof of Theorem \ref{rint2}.

\begin{proof}[Proof of Theorem \ref{rint2}.]
	First suppose that $p_0,p_1 \in (0,2]$.
	By Corollary \ref{cor:ZTemb} we have
	\begin{equation*}
		Z_{s_j}^{p_j,q}(\RR^n) \hookrightarrow T_{s_j}^{p_j,q}(\RR^n),
	\end{equation*}
	for $j=0,1$, and so
	\begin{equation*}
		(Z_{s_0}^{p_0,q}(\RR^n), Z_{s_1}^{p_1,q}(\RR^n))_{\gq,p_\gq} \hookrightarrow (T_{s_0}^{p_0,q}(\RR^n), T_{s_1}^{p_1,q}(\RR^n))_{\gq,p_\gq}.
	\end{equation*}
	Therefore by Proposition \ref{Z-rint} we have
	\begin{equation*}
		Z_{s_\gq}^{p_\gq,q}(\RR^n) \hookrightarrow (T_{s_0}^{p_0,q}(\RR^n), T_{s_1}^{p_1,q}(\RR^n))_{\gq,p_\gq},
	\end{equation*}
	and Corollary \ref{gilbcor2} then implies that we in fact have equality,
	\begin{equation*}
		Z_{s_\gq}^{p_\gq,q}(\RR^n) = (T_{s_0}^{p_0,q}(\RR^n), T_{s_1}^{p_1,q}(\RR^n))_{\gq,p_\gq}.
	\end{equation*}
	This equality also holds for $p_0,p_1 \in (1,\infty)$ by Theorem \ref{rint}.
	By reiteration, this equality holds for all $p_0,p_1 \in (0,\infty)$.
	The proof of Theorem \ref{rint2} is now complete.
\end{proof}

\begin{rmk}
	This can be extended to general unbounded AD-regular spaces by establishing a dyadic characterisation along the lines of Proposition \ref{dyadic-characterisation} (replacing Euclidean dyadic cubes with a more general system of `dyadic cubes'), and then proving analogues of Corollary \ref{cor:ZTemb} and Proposition \ref{Z-rint} using the dyadic characterisation.
	The Euclidean applications are enough for our planned applications, and the Euclidean argument already contains the key ideas, so we leave further details to any curious readers.
\end{rmk}

\subsection{Hardy--Littlewood--Sobolev embeddings}

In this section we prove the following embedding theorem.

\begin{thm}[Weighted tent space embeddings]\label{embthm1}
	Suppose $X$ is doubling.
	Let $0 < p_0 < p_1 \leq \infty$, $q \in (0,\infty]$ and $s_0 > s_1 \in \RR$.
	Then we have the continuous embedding
	\begin{equation*}
		T^{p_0,q}_{s_0} \hookrightarrow T^{p_1,q}_{s_1}
	\end{equation*}
	whenever $s_1 - s_0 = \gd_{p_0,p_1}$.
	Furthermore, when $p_0 \in (0,\infty]$, $q \in (1,\infty)$, and $\ga > 0$, we have the embedding
	\begin{equation*}
		T^{p_0,q}_{s_0} \hookrightarrow T^{\infty,q}_{s_1;\ga}
	\end{equation*}
	whenever $(s_1 + \ga) - s_0 = \gd_{p_0,\infty}$.
\end{thm}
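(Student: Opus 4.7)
My plan is to prove both embeddings via the atomic decomposition for weighted tent spaces, backed up by a direct tent-by-tent computation for the second embedding. The central observation for the first embedding is an \emph{atom upgrading} property: if $a$ is a $T^{p_0,q}_{s_0}$-atom associated with a ball $B$, then $a$ is essentially supported in $T(B)$ with $\norm{a}_{T^{q,q}_{s_0}} \leq \gm(B)^{\gd_{p_0,q}}$; since $V(y,t) \leq \gm(B)$ on $T(B)$ and $s_0 - s_1 > 0$, I get
\begin{equation*}
\norm{a}_{T^{q,q}_{s_1}} \leq \gm(B)^{s_0-s_1}\norm{a}_{T^{q,q}_{s_0}} \leq \gm(B)^{s_0-s_1+\gd_{p_0,q}} = \gm(B)^{\gd_{p_1,q}},
\end{equation*}
where the last equality uses $s_0-s_1 = \gd_{p_1,p_0}$ together with the additivity identity $\gd_{p_1,p_0}+\gd_{p_0,q}=\gd_{p_1,q}$. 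Thus $a$ is automatically a $T^{p_1,q}_{s_1}$-atom, so $\norm{a}_{T^{p_1,q}_{s_1}} \lesssim 1$.

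With atom upgrading in hand, I will first establish the first embedding in the atomic range $p_0 \in (0,1]$ with $p_0 \leq q$: writing $f = \sum_k \gl_k a_k$ via the weighted atomic decomposition with $\norm{(\gl_k)}_{\ell^{p_0}} \simeq \norm{f}_{T^{p_0,q}_{s_0}}$, applying the quasi-triangle in $T^{p_1,q}_{s_1}$, and using the elementary embedding $\ell^{p_0} \hookrightarrow \ell^{\max(1,p_1)}$ gives the claim. To extend beyond $p_0 \in (0,1]$, I will invoke the convex reduction identity $\norm{|f|^{p_0}}_{T^{1,q/p_0}_{p_0 s_0}} = \norm{f}_{T^{p_0,q}_{s_0}}^{p_0}$, which rescales any problem with $p_0 \leq q$ into the atomic range while preserving the HLS relation. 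For the remaining case $p_0 > q$ in the reflexive range $p_0, p_1, q > 1$, I will dualise via Proposition \ref{wtduality}: the dual embedding $T^{p_1',q'}_{-s_1} \hookrightarrow T^{p_0',q'}_{-s_0}$ has dual atomic parameter $p_1' < q'$ (since $p_1 > q$), so it reduces back to the cases already handled.

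For the second embedding I argue tent-by-tent. Given a ball $B \subset X$, the bound $V(y,t) \leq \gm(B)$ on $T(B)$ yields
\begin{equation*}
\iint_{T(B)} V^{-s_1 q}|f|^q \, d\gm\,\frac{dt}{t} \leq \gm(B)^{(s_0-s_1)q} \iint_{T(B)} V^{-s_0 q}|f|^q \, d\gm\,\frac{dt}{t}.
\end{equation*}
A Fubini exchange, based on the implication $(y,t) \in T(B) \Rightarrow B(y,t) \subset B$, converts the right-hand integral into $\int_B \mc{A}^q(V^{-s_0}f)(x)^q \, d\gm(x)$. When $p_0 \geq q$, H\"older's inequality bounds this by $\gm(B)^{1-q/p_0}\norm{f}_{T^{p_0,q}_{s_0}}^q$; when $p_0 < q$, I will first apply the first embedding of the theorem to reach an intermediate weighted tent space with integrability exponent $\geq q$ (the parameter $\ga$ is preserved under this composition), and then close with H\"older. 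Assembling the exponents via $(s_1+\ga)-s_0 = -1/p_0$ produces the required uniform Carleson estimate on $\mc{C}^q_\ga(V^{-s_1}f)$.

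The main obstacle I anticipate is the non-reflexive corner $p_0 > q$ of the first embedding, particularly the region $q < p_0 \leq 1$ (and more generally wherever $q < 1$): here neither the convex reduction (which preserves the ratio $p_0/q$) nor duality (which requires $p_0, p_1 \geq 1$ and $q > 1$) applies directly. Covering this remaining slice will require a careful combination of convex reduction, composition of partial embeddings already established in overlapping parameter regions, and possibly a direct quasi-Banach duality argument analogous to Theorem \ref{duality-qb}.
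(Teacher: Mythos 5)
Your architecture for the first embedding (atomic upgrading $\to$ convex reduction $\to$ duality $\to$ composition) is the same as the paper's. Two points to tighten. First, your atom-upgrading claim ``$a$ is automatically a $T^{p_1,q}_{s_1}$-atom, so $\norm{a}_{T^{p_1,q}_{s_1}} \lesssim 1$'' secretly uses H\"older with exponent $q/p_1 \geq 1$ to pass from the $T^{q,q}_{s_1}$ bound on $T(B)$ to the $T^{p_1,q}_{s_1}$ norm; this forces $p_1 \leq q$, exactly the restriction in the paper's Lemma~\ref{atomic-coincidences}. So the atomic step only delivers the range $p_0 < p_1 \leq q$, and the middle regime $p_0 \leq q \leq p_1$ must be handled by composing through $T^{q,q}$ (the paper's Step~4); you gesture at ``composition of partial embeddings'' but should make this a named step rather than an afterthought. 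Second, and more importantly, you mislabel the $q \leq 1$ corner as ``the main obstacle'' possibly requiring a new quasi-Banach duality theorem. Your observation that convex reduction preserves $p_0/q$ while duality needs $q > 1$ is correct in isolation, but the fix is simply to apply them \emph{in sequence}: choose $M < q$ with $q/M > 1$, write $\norm{f}_{T^{p_1,q}_{s_1}} = \norm{|f|^M \mid T^{p_1/M,q/M}_{Ms_1}}^{1/M}$, and note that after this rescaling the inner exponent $q/M$ exceeds $1$, so the cases you have already established (atomic/composition/duality, which together cover all $0 < p_0/M < p_1/M \leq \infty$ once $q/M > 1$) apply directly. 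This is exactly the paper's Step~5; no Theorem~\ref{duality-qb}-style argument is needed.

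Your tent-by-tent proof of the second embedding is a genuine alternative to the paper's. The paper dualises the already-established embedding $T^{\gp_0,q'}_{-s_1} \hookrightarrow T^{\gp_1,q'}_{-s_0}$ via Proposition~\ref{wtduality} and finishes with a convex reduction for $p_0 \leq 1$; you instead estimate the Carleson functional directly, using $V(y,t) \leq \gm(B)$ on $T(B)$, the inclusion $B(y,t) \subset B$ to Fubini into $\int_B \mc{A}^q(V^{-s_0}f)^q\, d\gm$, and then H\"older when $p_0 \geq q$, composing with the first embedding to raise $p_0$ up to $q$ otherwise. The exponent bookkeeping checks out: $-\ga + (s_0-s_1) - 1/p_0 = 0$ given $(s_1+\ga)-s_0 = -1/p_0$. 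This direct route is sound and arguably more transparent about where the hypothesis $q > 1$ enters (only in H\"older), at the cost of being slightly longer than the two-line dualisation once the first embedding is available.
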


	These embeddings can be thought of as being of Hardy--Littlewood--Sobolev-type, in analogy with the classical Hardy--Littlewood--Sobolev embeddings of homogeneous Triebel--Lizorkin spaces (see for example \cite[Theorem 2.1]{bJ77}).
	
The proof of Theorem \ref{embthm1} relies on the following atomic estimate.
Note that no geometric assumptions are needed here.

\begin{lem}\label{atomic-coincidences}
	Let $1 \leq p \leq q \leq \infty$ and $s_0 > s_1 \in \RR$ with $s_1 - s_0 = \gd_{1,p}$.
	Suppose that $a$ is a $T^{1,q}_{s_0}$-atom.
	Then $a$ is in $T^{p,q}_{s_1}$, with $\norm{a}_{T^{p,q}_{s_1}} \leq 1$.
\end{lem}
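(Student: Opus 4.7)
The plan is to exploit three features of a $T^{1,q}_{s_0}$-atom $a$ associated with a ball $B$: the support in $T(B)$, which gives both a pointwise bound on the weight $V^{s_0-s_1}$ and a support constraint on the Lusin function; the $L^q$-atomic size estimate $\|V^{-s_0}a\|_{L^q(X^+)} \leq \mu(B)^{1/q-1}$; and Hölder's inequality to pass from the $L^q$-estimate to the $L^p$-estimate on $\mu(B)$. No geometric hypothesis on $X$ is used, and the relation $s_0-s_1=1/p'$ is exactly what makes the powers of $\mu(B)$ cancel.

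\textbf{Step 1: pointwise domination.} Write $V^{-s_1}a = V^{s_0-s_1}(V^{-s_0}a) = V^{1/p'}(V^{-s_0}a)$. Since $a$ is essentially supported in $T(B)$, every $(y,t)$ with $a(y,t)\neq 0$ satisfies $B(y,t)\subset B$, hence $V(y,t)\leq\mu(B)$. As $1/p'\geq 0$ (because $p\geq 1$), this yields pointwise
\begin{equation*}
    |V^{-s_1}a(y,t)| \;\leq\; \mu(B)^{1/p'}\,|V^{-s_0}a(y,t)|,
\end{equation*}
and consequently $\mc{A}^q(V^{-s_1}a)(x)\leq \mu(B)^{1/p'}\,\mc{A}^q(V^{-s_0}a)(x)$ for every $x\in X$ (including the $q=\infty$ case with essential suprema).

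\textbf{Step 2: support of the Lusin function.} If $(y,t)\in\gG(x)\cap T(B)$ then $x\in B(y,t)\subset B$, so $\mc{A}^q(V^{-s_0}a)$ is essentially supported in $B$.

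\textbf{Step 3: Hölder and the atomic bound.} When $p<q$, apply Hölder with exponents $q/p$ and $(q/p)'$ on $B$:
\begin{equation*}
    \int_B \mc{A}^q(V^{-s_0}a)(x)^p\,d\gm(x) \;\leq\; \mu(B)^{1-p/q}\left(\int_X \mc{A}^q(V^{-s_0}a)(x)^q\,d\gm(x)\right)^{p/q}.
\end{equation*}
Since $T^{q,q}=L^q(X^+)$ with equal norms, $\|V^{-s_0}a\|_{L^q(X^+)}=\|a\|_{T^{q,q}_{s_0}}\leq \mu(B)^{\gd_{1,q}}=\mu(B)^{1/q-1}$, so the right-hand side is at most $\mu(B)^{1-p/q}\mu(B)^{p(1/q-1)}=\mu(B)^{1-p}$. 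The cases $p=q$ (trivial) and $q=\infty$ (use $\mu(B)\cdot\|\mc{A}^\infty(V^{-s_0}a)\|_\infty^p$, with $\|a\|_{T^{\infty,\infty}_{s_0}}\leq\mu(B)^{-1}$) give the same bound. Combining with Step 1,
\begin{equation*}
    \|a\|_{T^{p,q}_{s_1}}^p \;=\; \int_X \mc{A}^q(V^{-s_1}a)^p\,d\gm \;\leq\; \mu(B)^{p/p'}\cdot\mu(B)^{1-p} \;=\; \mu(B)^{(p-1)+(1-p)} \;=\; 1.
\end{equation*}
The case $p=\infty$ (forcing $q=\infty$ and $s_0-s_1=1$) follows directly from Step 1 and $\|a\|_{T^{\infty,\infty}_{s_0}}\leq \mu(B)^{-1}$.

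There is no real obstacle: the argument is essentially bookkeeping, and the only thing to be careful about is the $q=\infty$ or $p=\infty$ endpoints, where Hölder is replaced by the trivial $L^\infty$ bound. The miracle is the exact cancellation of $\mu(B)$-powers, which reflects the scale-invariance imposed by the constraint $s_1-s_0=\gd_{1,p}$.
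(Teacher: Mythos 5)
Your proof is correct and follows essentially the same route as the paper's: pull out the factor $V^{s_0-s_1}=V^{-\gd_{1,p}}$ and bound it by $\gm(B)^{-\gd_{1,p}}$ on $T(B)$, then use H\"older with exponent $q/p$ on $B$ together with the size bound $\norm{a}_{T^{q,q}_{s_0}}\leq\gm(B)^{\gd_{1,q}}$, and observe that the powers of $\gm(B)$ cancel by $\gd_{p,1}+\gd_{q,p}+\gd_{1,q}=0$. The only cosmetic difference is that you carry out the bookkeeping in the $p$-th power of the norm and spell out the $q=\infty$ and $p=\infty$ endpoints a little more explicitly, but the ingredients and their order are identical to the paper's.
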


\begin{proof}
		Suppose that the atom $a$ is associated with the ball $B \subset X$.
		When $p \neq \infty$, using the fact that $B(x,t) \subset B$ whenever $(x,t) \in T(B)$ and that $-\gd_{1,p} > 0$, we have
		\begin{align*}
			\norm{a}_{T^{p,q}_{s_1}}
			&= \norm{\mc{A}^q(V^{-s_1} a)}_{L^p(B)} \\
			&\leq \norm{V^{-\gd_{1,p}}}_{L^\infty(T(B))} \norm{\mc{A}^q(V^{-s_0} a)}_{L^p(B)}\\
			&\leq \gm(B)^{\gd_{p,1}} \gm(B)^{\gd_{q,p}} \norm{a}_{T^{q,q}_{s_0}} \\
			&\leq \gm(B)^{\gd_{p,1} + \gd_{q,p} + \gd_{1,q}} \\
			&= 1,
		\end{align*}
		where we used H\"older's inequality with exponent $q/p \geq 1$ in the third line.
		
		When $p = q = \infty$ the argument is simpler: we have
		\begin{align*}
			\norm{a}_{T^{\infty,\infty}_{s_1}}
			&= \norm{V^{-s_0-\gd_{1,\infty}} a}_{L^\infty(T(B))} \\
			&\leq \norm{V^{-\gd_{1,\infty}}}_{L^\infty(T(B))} \norm{V^{-s_0} a}_{L^\infty(T(B))} \\
			&\leq \gm(B)^{\gd_{\infty,1}} \gm(B)^{\gd_{1,\infty}} \\
			&= 1
		\end{align*}
		using the same arguments as before (without needing H\"older's inequality).
\end{proof}

Now we will prove the embedding theorem.
Here is a quick outline of the proof.
First we establish the first statement for $p_0 = 1$ and $1 < p_1 \leq q$ by using part (1) of Lemma \ref{atomic-coincidences}.
A convexity argument extends this to $0 < p_0 < p_1 \leq q$, with $q > 1$.
Duality then gives the case $1 < q \leq p_0 < p_1 \leq \infty$, including when $p_1 = \infty$ and $\ga \neq 0$.
A composition argument completes the proof with $q > 1$.
Finally, we use another convexity argument to allow for $q \in (0,1]$ (with $p_1 < \infty$).
To handle the second statement, we argue by duality again.

\begin{proof}[Proof of Theorem \ref{embthm1}]
	The proof is split into six steps, corresponding to those of the outline above.
	
	\textbf{Step 1.}
	First suppose that $f \in T^{1,q}_{s_0}$ and $1 \leq p_1 \leq q$.	By the weighted atomic decomposition theorem, we can write $f = \sum_k \gl_k a_k$ where each $a_k$ is a $T^{1,q}_{s_0}$-atom, with the sum converging in $T^{1,q}_{s_0}$.
	By Lemma \ref{atomic-coincidences} we have
	\begin{equation*}
		\norm{f}_{T^{p_1,q}_{s_1}} \leq \norm{\gl_k}_{\ell^1(\NN)}.
	\end{equation*}
	Taking the infimum over all atomic decompositions yields the continuous embedding
	\begin{equation}\label{emb1}
		T^{1,q}_{s_0} \hookrightarrow T^{p_1,q}_{s_1} \quad (1 < p_1 \leq q \leq \infty, \quad s_1 - s_0 = \gd_{1,p_1}).
	\end{equation}
	
	\textbf{Step 2.}
	Now suppose $0 < p_0 < p_1 \leq q$, $s_1 - s_0 = \gd_{p_0,p_1}$, and $f \in T^{p_0,q}_{s_0}$.
	Using \eqref{emb1} and noting that $q/p_0 > 1$ and
	\begin{equation*}
		p_0 s_1 - p_0 s_0 = p_0 \gd_{p_0,p_1} = \gd_{1, p_1/p_0},
	\end{equation*}
	we have
	\begin{align*}
		\norm{f}_{T^{p_1,q}_{s_1}}
		&= \norm{f^{p_0} \mid T^{p_1/p_0,q/p_0}_{p_0 s_1}}^{1/p_0} \\
		&\lesssim \norm{f^{p_0} \mid T^{1,q/p_0}_{p_0 s_0}}^{1/p_0} \\
		&= \norm{f}_{T^{p_0,q}_{s_0}},
	\end{align*}
	which yields the continuous embedding
	\begin{equation}\label{emb2}
		T^{p_0,q}_{s_0} \hookrightarrow T^{p_1,q}_{s_1} \quad (0 < p_0 < p_1 \leq q \leq \infty, \quad q > 1, \quad s_1 - s_0 = \gd_{p_0,p_1}).
	\end{equation}
	
	\textbf{Step 3.}
	We now use a duality argument.
	Suppose $1 < q \leq p_0 < p_1 \leq \infty$.
	Define $\gp_0 := p_1^\prime$, $\gp_1 := p_0^\prime$, $\gr := q^\prime$, $\gs_0 := -s_1$, and $\gs_1 := -s_0$, with $s_1 - s_0 = \gd_{p_0,p_1}$.
	Then
	\begin{equation*}
		\gs_1 - \gs_0 = -s_0 + s_1 = \gd_{p_0,p_1} = \gd_{\gp_0,\gp_1},
	\end{equation*}
	and so \eqref{emb2} gives the continuous embedding
	\begin{equation*}
		T^{\gp_0,\gr}_{\gs_0} \hookrightarrow T^{\gp_1,\gr}_{\gs_1}.
	\end{equation*}
	Taking duals and considering that $T^{\gp_0,\gr_0}_{\gs_0}$ is dense in $T_{\gs_1}^{\gp_1,\gr}$ results in the continuous embedding
	\begin{equation}\label{emb3}
		T^{p_0,q}_{s_0} \hookrightarrow T^{p_1,q}_{s_1} \quad (1 < q \leq p_0 < p_1 \leq \infty, \quad s_1 - s_0 = \gd_{p_0,p_1}).
	\end{equation}
	
	\textbf{Step 4.}
	Now suppose that $0 < p_0 \leq q \leq p_1 \leq \infty$ and $q > 1$, again with $s_1 - s_0 = \gd_{p_0,p_1}$.
	Then combining \eqref{emb2} and \eqref{emb3} gives continuous embeddings
	\begin{equation}\label{emb4}
		T^{p_0,q}_{s_0} \hookrightarrow T^{q,q}_{s_0 + \gd_{p_0,q}} \hookrightarrow T^{p_1,q}_{s_0 + \gd_{p_0,q} + \gd_{q,p_1}} = T^{p_1,q}_{s_1}.
	\end{equation}
	
	\textbf{Step 5.} Finally, suppose $q \leq 1$, and choose $M > 0$ such that $q/M > 1$.
	Then using a similar argument to that of Step 2, with $Ms_1 - Ms_0 = M\gd_{p_0,p_1} = \gd_{p_0/M,p_1/M}$,
	\begin{align*}
		\norm{f}_{T^{p_1,q}_{s_1}}
		&= \norm{ f^M \mid T^{p_1/M, q/M}_{Ms_1}}^{1/M} \\
		&\lesssim \norm{f^M \mid T^{p_0/M, q/M}_{Ms_0}}^{1/M} \\
		&= \norm{f}_{T^{p_0,q}_{s}}.
	\end{align*}
	
	All possible positions of $q$ relative to $0 < p_0 < p_1 \leq \infty$ have thus been covered, so the proof of the first statement is complete.
	
	\textbf{Step 6.}
	For the second statement, we let $(s_1 + \ga) - s_0 = \gd_{p_0,\infty}$, and first we suppose that $p_0 \in (1,\infty]$.
	Let
	\begin{align*}
		\gp_0 &:= (1+\ga)^{-1} \in (0,1), \\
		\gp_1 &:= p_0^\prime \in (1,\infty], \\
		\gr &= q^\prime, \quad \gs_0 = -s_1, \quad \gs_1 = -s_0.
	\end{align*}
	Then $\ga = \gd_{1,\gp_0} = \gd_{p_1,\infty}$ and so we have
	\begin{equation*}
		\gs_1 - \gs_0 = \gd_{p_0,\infty} - \ga = \gd_{1,\gp_1} - \gd_{1,\gp_0} = \gd_{\gp_0,\gp_1},
	\end{equation*}
	which yields
	\begin{equation*}
		T^{\gp_0,\gr}_{\gs_0} \hookrightarrow T^{\gp_1,\gr}_{\gs_1}.
	\end{equation*}
	Taking duals yields
	\begin{equation*}
		T^{p_0,q}_{s_0} \hookrightarrow T^{\infty,q}_{s_1,\ga},
	\end{equation*}
	which completes the proof when $p_0 \in (1,\infty]$.
	One last convex reduction argument, as in Step 2, completes the proof.
\end{proof}

	We remark that this technique also yields the embedding $T^{\infty,q}_{s_0,\ga_0} \hookrightarrow T^{\infty,q}_{s_1,\ga_1}$ when $(s_1 + \ga_1) - (s_0 + \ga_0) = 0$, $s_0 > s_1$, and $0 \leq \ga_0 < \ga_1$.
		
	\begin{rmk}
	The embeddings of Theorems \ref{embthm1}, at least for $p,q \in (1,\infty)$, also hold with $Z^{p,q}_s$ replacing $T^{p,q}_s$ on either side (or both sides) of the embedding.
	This can be proven by writing $Z^{p,q}_s$ as a real interpolation space between tent spaces $T^{\td{p},q}_{\td{s}}$ with $\td{p}$ near $p$ and $\td{s}$ near $s$, applying the tent space embedding theorems, and then interpolating again.
	These embeddings can also be proven `by hand', even for $p, q \leq 1$.
	We leave the details to any curious readers.
	\end{rmk}
	
\section{Deferred proofs}

\subsection{$T^{p,\infty}$-$L^\infty$ estimates for cylindrically supported functions}\label{defprf:duality}

The following lemma, which extends \cite[Lemma 3.3]{aA14} to the case $q = \infty$, is used in the proof that $T^{p,\infty}$ is complete (see Proposition \ref{density-completeness}).

\begin{lem}\label{loclem}
	Suppose that $X$ is doubling and let $K \subset X^+$ be cylindrical.
	Then for all $p \in (0,\infty]$ and all $f \in L^0(X^+)$,
	\begin{equation*}
		\norm{\mb{1}_K f}_{T^{p,\infty}} \lesssim_{K,p} \norm{f}_{L^\infty(K)} \lesssim_{K,p} \norm{f}_{T^{p,\infty}}.
	\end{equation*}
\end{lem}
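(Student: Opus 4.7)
Write $K = B(c, r_K) \times (\kappa_0, \kappa_1)$ with $0 < \kappa_0 < \kappa_1 < \infty$. The case $p = \infty$ is immediate, since $T^{\infty,\infty}$ is $L^\infty(X^+)$ by definition. So throughout I focus on $p < \infty$.

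For the first inequality, I would observe that $\mathcal{A}^\infty(\mathbf{1}_K f)(x) \leq \norm{f}_{L^\infty(K)}$ trivially, and that this essential supremum vanishes unless $\Gamma(x) \cap K$ has positive measure. In particular, if $\mathcal{A}^\infty(\mathbf{1}_K f)(x) > 0$ then there exists $(y,t) \in K$ with $d(x,y) < t$, hence $d(x,c) \leq d(x,y) + d(y,c) < \kappa_1 + r_K$. Therefore the function $\mathcal{A}^\infty(\mathbf{1}_K f)$ is supported in $B(c, r_K + \kappa_1)$ and bounded by $\norm{f}_{L^\infty(K)}$, giving
\begin{equation}\label{inftyclaim}
	\norm{\mathbf{1}_K f}_{T^{p,\infty}} = \norm{\mathcal{A}^\infty(\mathbf{1}_K f)}_{L^p(X)} \leq V(c, r_K + \kappa_1)^{1/p} \norm{f}_{L^\infty(K)},
\end{equation}
as desired.

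For the second inequality, the strategy is to show that for every $\lambda < \norm{f}_{L^\infty(K)}$ the superlevel set $A_\lambda := \{x \in X : \mathcal{A}^\infty f(x) > \lambda\}$ satisfies $\mu(A_\lambda) \geq c_K$ for some constant depending only on $K$. From the layer-cake bound $\norm{\mathcal{A}^\infty f}_{L^p(X)} \geq \lambda \mu(A_\lambda)^{1/p}$ and the fact that $\lambda$ can be taken arbitrarily close to $\norm{f}_{L^\infty(K)}$, this gives the second inequality with constant $c_K^{-1/p}$.

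To produce the lower bound on $\mu(A_\lambda)$, consider $E_\lambda := \{(y,t) \in K : |f(y,t)| > \lambda\}$, which has positive product measure by the choice of $\lambda$. Since $\mu$ is doubling, so is the product measure on $X \times \RR_+$, and the Lebesgue differentiation theorem provides a density point $(y_0, t_0) \in E_\lambda$. I would then fix $r \in (0, \kappa_0/4)$ small enough that $E_\lambda \cap (B(y_0, r) \times (t_0 - r, t_0 + r))$ has positive measure. The key geometric step is that for every $x \in B(y_0, t_0 - 2r)$ and every $(y,t) \in B(y_0, r) \times (t_0 - r, t_0 + r)$ one has
\begin{equation*}
	d(x,y) \leq d(x, y_0) + d(y_0, y) < (t_0 - 2r) + r = t_0 - r < t,
\end{equation*}
so the entire cylinder is contained in $\Gamma(x)$, whence $\Gamma(x) \cap E_\lambda$ has positive measure and $x \in A_\lambda$. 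Since $t_0 - 2r > \kappa_0/2$, this gives $A_\lambda \supset B(y_0, \kappa_0/2)$, and the doubling property (applied to compare $B(y_0, \kappa_0/2)$ with $B(y_0, 2r_K + \kappa_0/2) \supset B(c, r_K)$) produces a lower bound $\mu(B(y_0, \kappa_0/2)) \geq c_K > 0$ uniform in the choice of $y_0 \in B(c, r_K)$.

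The main obstacle will be the second inequality, specifically translating the essential supremum condition defining $E_\lambda$ into a macroscopic set of cones in which $f$ exceeds $\lambda$ on a positive-measure subset. This is where doubling is essential, both to invoke Lebesgue differentiation on $X \times \RR_+$ and to obtain the uniform-in-$y_0$ volume bound on $B(y_0, \kappa_0/2)$ at the end.
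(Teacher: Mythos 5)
Your proof is correct, and the treatment of the first inequality is essentially identical to the paper's. For the second inequality you take a genuinely different (if closely related) route. The paper covers $K$ by the countable family of vertically translated cones $(\gG(x_j)+\gk_0)\cap K$, with $(x_j)$ a countable dense subset of $B_K$; since the superlevel set $\{|f| > 2^{-1}\norm{f}_{L^\infty(K)}\}$ has positive measure, it must meet some $(\gG(x_j)+\gk_0)\cap K$ in positive measure, and then $\gG(x)\supset(\gG(x_j)+\gk_0)$ for all $x\in B(x_j,\gk_0)$ yields $\mc{A}^\infty f\gtrsim\norm{f}_{L^\infty(K)}$ on the ball $B(x_j,\gk_0)$. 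You instead locate a density point $(y_0,t_0)$ of the superlevel set $E_\gl$ and directly check that the small cylinder $B(y_0,r)\times(t_0-r,t_0+r)$ sits inside $\gG(x)$ for all $x\in B(y_0,t_0-2r)$, so $\mc{A}^\infty f>\gl$ on the ball $B(y_0,\gk_0/2)$. Both arguments finish the same way: doubling gives a lower bound on the measure of a ball of radius $\sim\gk_0$ centered in $B_K$, uniform in the center, comparable to $\gm(B_K)$. One minor comment: your invocation of Lebesgue differentiation (and doubling of the product measure) is heavier machinery than needed here. You do not need a true density point --- you only need some $(y_0,t_0)\in E_\gl$ every neighbourhood of which meets $E_\gl$ in positive measure, and the existence of such a point is an elementary consequence of the Lindel\"of property of $X^+$ (which holds already by separability of $X$, i.e.\ nondegeneracy), with no doubling required for this step. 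Doubling is only genuinely used, in both proofs, for the final uniform volume lower bound.
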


\begin{proof}
	When $p = \infty$ this reduces to
	\begin{equation*}
		\norm{\mb{1}_K f}_{L^\infty(X^+)} = \norm{f}_{L^\infty(K)} \leq \norm{f}_{L^\infty(X^+)},
	\end{equation*}
	which is immediate.
	Thus it suffices to prove the result for $p < \infty$.
	Write $K \subset B_K \times (\gk_0, \gk_1)$ for some ball $B_K = B(c_K,r_K) \subset X$ and $0 < \gk_0 < \gk_1 < \infty$.
	
	To prove that $\norm{\mb{1}_K f}_{T^{p,\infty}} \lesssim_{K,p} \norm{f}_{L^\infty(K)}$, observe that
	\begin{align*}	
		\norm{\mb{1}_K f}_{T^{p,\infty}} &\leq \norm{f}_{L^\infty(K)} \gm\{x \in X : \gG(x) \cap K \neq \varnothing\}^{1/p} \\
		&\leq \norm{f}_{L^\infty(K)} V(c_K, r_K + \gk_1)^{1/p}
	\end{align*}
	because if $x \notin B(c_K,r_K + \gk_1)$ then $\gG(x) \cap (B_K \times (\gk_0, \gk_1)) = \varnothing$.
	Note also that $V(c_K, r_K + \gk_1)$ is finite and depends only on $K$.

	Now we will prove that $\norm{f}_{L^\infty(K)} \lesssim_{K,p} \norm{f}_{T^{p,\infty}}$.
	First note that the doubling property implies that for all $R > 0$ and for all balls $B \subset X$,
	\begin{equation}\label{inftyclaim}
		\inf_{x \in B} \gm(B(x,R)) \gtrsim_{X,R,r_B} \gm(B). 
	\end{equation}
	Indeed, if $x \in B$ and $R \leq 2r_B$ then
	\begin{equation*}
		\gm(B) \leq \gm(B(x,R(2r_B R^{-1}))) \lesssim_X (2r_B R^{-1})^n \gm(B(x,R)).
	\end{equation*}
	where $n \geq 0$ is the doubling dimension of $X$.
	If $R > 2r_B$ then since $2r_B R^{-1} < 1$, we have $\gm(B) \leq \gm(B(x,R))$.
	
	Let $(x_j)_{j \in \NN}$ be a countable dense subset of $B_K$.
	Then we have
	\begin{equation*}
		K = \bigcup_{j \in \NN} (\gG(x_j) + \gk_0) \cap K.
	\end{equation*}
	By definition the set $\{(y,t) \in K : |f(y,t)| > 2^{-1}\norm{f}_{L^\infty(K)}\}$ has positive measure, so there exists $j \in \NN$ such that $|f(y,t)| > 2^{-1} \norm{f}_{L^\infty(K)}$ for $(y,t)$ in some subset of $(\gG(x_j) + \gk_0) \cap K$ with positive measure.
	Since $(\gG(x_j) + \gk_0) \cap K \subset \gG(x) \cap K$ for all $x \in B(x_j,\gk_0)$, we have that $\mc{A}^\infty(f)(x) \geq 2^{-1}\norm{f}_{L^\infty(K)}$ for all $x \in B(x_j,\gk_0)$.
	Therefore, using \eqref{inftyclaim},
	\begin{align*}
		\norm{\mc{A}^\infty f}_{L^p(X)}
		&\geq \frac{1}{2} \gm(B(x_j,\gk_0))^{1/p} \norm{f}_{L^\infty(K)} \\
		&\gtrsim_{X,K} \gm(B_K)^{1/p} \norm{f}_{L^\infty(K)} \\
		&\simeq_{K,p} \norm{f}_{L^\infty(K)}.
	\end{align*}
	This completes the proof of the lemma.
\end{proof}

\subsection{$T^{p,\infty}$ atomic decomposition}\label{defprf:atomic}

As stated above, the atomic decomposition theorem for $T^{p,\infty}$ can be proven by combining the arguments of Coifman--Meyer--Stein (who prove the result in the Euclidean case) and Russ (who proves the atomic decomposition of $T^{p,2}(X)$ for $0 < p \leq 1$ when $X$ is doubling).

First we recall a classical lemma (see for example \cite[Lemma 2.2]{eR07}), which combines a Vitali-type covering lemma with a partition of unity.
This is proven by combining the Vitali-type covering of Coifmann--Weiss \cite[Th\'eor\`eme 1.3]{CW71} with the partition of unity of Mac{\'i}as--Segovia \cite[Lemma 2.16]{MS79}. 

\begin{lem}\label{lem:russ}
	Suppose that $X$ is doubling, and let $O$ be a proper open subset of $X$ of finite measure.
	For all $x \in X$ write $r(x) := \dist(x,O^c)/10$.
	Then there exists $M > 0$, a countable indexing set $I$, and a collection of points $\{x_i\}_{i \in I}$ such that
	\begin{itemize}
		\item $O = \cup_{i \in I} B(x_i, r(x_i))$,
		\item if $i,j \in I$ are not equal, then $B(x_i, r(x_i)/4)$ and $B(x_j, r(x_j)/4)$ are disjoint, and
		\item for all $i \in I$, there exist at most $M$ indices $j \in I$ such that $B(x_j,5r(x_j))$ meets $B(x_i,5r(x_i))$.
	\end{itemize}
	Moreover, there exist a collection of measurable functions $\{\map{\gf_i}{X}{[0,1]}\}_{i \in I}$ such that
	\begin{itemize}
		\item $\supp \gf_i \subset B(x_i,2r(x_i))$,
		\item $\sum_i \gf_i = \mb{1}_O$ (for each $x \in X$ the sum $\sum_i \gf_i(x)$ is finite due to the third condition above).
	\end{itemize}
	\end{lem}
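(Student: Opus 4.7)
The plan is to extract both the covering and partition of unity by one application of the Coifman--Weiss Vitali-type selection lemma, using the crucial observation that the function $\map{r}{X}{[0,\infty)}$ defined by $r(x) = \dist(x,O^c)/10$ is $(1/10)$-Lipschitz and positive on the open set $O$. First I would apply the Coifman--Weiss selection theorem to the cover $\{B(x,r(x)/4) : x \in O\}$ of $O$ to obtain a maximal pairwise disjoint subfamily $\{B(x_i,r(x_i)/4)\}_{i \in I}$; the countability of $I$ follows from the fact that $O$ has finite measure together with the doubling property (each $B(x_i,r(x_i)/4)$ has positive measure). From maximality and the standard $5$-fold enlargement argument (adapted to variable radii via the Lipschitz bound on $r$), one then verifies that $O = \bigcup_{i \in I} B(x_i, r(x_i))$, establishing the first two bullet points.

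The third bullet, the bounded overlap of the enlarged balls $B(x_i, 5r(x_i))$, is where the main work lies. Suppose $B(x_j,5r(x_j)) \cap B(x_i,5r(x_i)) \neq \varnothing$. Then $d(x_i,x_j) \leq 5(r(x_i)+r(x_j))$, and combining this with the Lipschitz estimate $|r(x_i)-r(x_j)| \leq d(x_i,x_j)/10$ forces $r(x_i)$ and $r(x_j)$ to be comparable up to an absolute multiplicative constant. Consequently, for all such $j$ the disjoint balls $B(x_j,r(x_j)/4)$ lie inside a single ball centred at $x_i$ whose radius is a uniform multiple of $r(x_i)$, and the doubling condition bounds the number of such $j$ by some $M$ depending only on the doubling constant of $X$.

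For the partition of unity I would introduce, for each $i \in I$, the standard Lipschitz cutoff
\begin{equation*}
	\gc_i(x) := \max\!\left(0,\ 1 - \frac{\dist(x,B(x_i,r(x_i)))}{r(x_i)}\right),
\end{equation*}
which is supported in $B(x_i,2r(x_i))$ and identically $1$ on $B(x_i,r(x_i))$. The bounded overlap property ensures that the sum $S(x) := \sum_{j \in I} \gc_j(x)$ has at most $M$ nonzero terms at each point, and the covering property ensures $S(x) \geq 1$ for every $x \in O$. Setting $\gf_i := \mb{1}_O \gc_i / S$ gives measurable functions with $\supp \gf_i \subset B(x_i,2r(x_i))$ and $\sum_i \gf_i = \mb{1}_O$, as required.

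The main obstacle will be justifying the variable-radius Vitali selection and the bounded overlap estimate cleanly in the doubling metric setting; the Lipschitz control on $r$ is what makes this work, since it lets one exchange estimates involving $r(x_i)$ and $r(x_j)$ whenever the associated enlarged balls meet. Everything else is a straightforward invocation of the cited Coifman--Weiss covering theorem and the Macias--Segovia partition of unity construction.
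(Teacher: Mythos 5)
Your proposal is correct and matches the approach taken in the paper: the lemma is not proven there but cited from Russ \cite[Lemma 2.2]{eR07}, which itself combines the Vitali-type covering of Coifman--Weiss \cite{CW71} with the Mac{\'i}as--Segovia partition of unity \cite{MS79}, exactly as your sketch reconstructs. Your identification of the $(1/10)$-Lipschitz control on $r$ as the engine of both the variable-radius selection and the bounded-overlap estimate, and your normalized Lipschitz-bump construction of the $\varphi_i$, are the standard ingredients in the cited proofs.
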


Now we can follow a simplified version of the argument of Russ, which is essentially the argument of Coifman--Meyer--Stein with the partition of unity of Lemma \ref{lem:russ} replacing the use of the Whitney decomposition.

\begin{proof}[Proof of Theorem \ref{atomic-decomposition}, with $q = \infty$]
	Suppose $f \in T^{p,\infty}$, and for each $k \in \ZZ$ define the set
	\begin{equation*}
		O^k := \{x \in X : \mc{A}^\infty f (x) > 2^k\}.
	\end{equation*}
	The sets $O^k$ are open by lower semicontinuity of $\mc{A}^\infty f$ (Lemma \ref{ACsemicontinuity}), and the function $f$ is essentially supported in $\cup_{k \in \ZZ} T(O^k) \sm T(O^{k+1})$.
	Thus we can write
	\begin{equation}\label{eqn:atomic-1}
		f = \sum_{k \in \ZZ} \mb{1}_{T(O^k) \sm T(O^{k+1})} f.
	\end{equation}
	
	\textbf{Case 1: $\gm(X) = \infty$.}
	In this case we must have $\gm(O^k) < \infty$ for each $k \in \ZZ$, for otherwise we would have $\norm{\mc{A}^\infty f}_{L^p(X)} = \infty$ and thus $f \notin T^{p,\infty}$.
	Hence for each $k \in \ZZ$ there exist countable collections of points $\{x_i^k\}_{i \in I^k} \subset O^k$ and measurable functions $\{\gf_i^k\}_{i \in I^k}$ as in Lemma \ref{lem:russ}.
	Combining \eqref{eqn:atomic-1} with $\sum_{i \in I^k} \gf_i^k = \mb{1}_{O^k}$ and $T(O^k) \subset O^k \times \RR_+$, we can write
	\begin{align*}
		f(y,t) &= \sum_{k \in \ZZ} \sum_{i \in I^k} \gf_i^k(y) \mb{1}_{T(O^k) \sm T(O^{k+1})}(y,t) f(y,t) \\
			&= \sum_{k \in \ZZ} \sum_{i \in I^k} \td{a}_i^k(y,t).
	\end{align*}
	Note that
	\begin{equation}\label{eqn:easy-atom-est}
		\norm{\td{a}_i^k}_{L^\infty(X^+)} \leq \esssup_{(y,t) \notin T(O^{k+1})} |f(y,t)| \leq 2^{k+1};
	\end{equation}
	we shall pause to prove the second inequality.
	Since $X^+$ is a Lindel\"of space, there exists a countable set $J$ such that $T(O^{k+1})^c = \cup_{i \in J} \gG(x_i)$, where $x_i \in (O^{k+1})^c$ for each $i \in J$.
	For each $i \in J$ choose a null subset $N_i \subset \gG(x_i)$ with $|f(y,t)| \leq \mc{A}^\infty f(x_i)$ for all $(y,t) \in \gG(x_i) \sm N_i$.
	Then $|f(y,t)| \leq 2^{k+1}$ on $T(O^{k+1}) \sm \cup_{i \in J} N_i$, which proves the second inequality of \eqref{eqn:easy-atom-est}.\footnote{I thank the referee specifically for pointing out the need for this argument.}
	
	Define
	\begin{equation*}
		a_i^k := 2^{-(k+1)} \gm(B_i^k)^{-1/p} \td{a}_i^k,
	\end{equation*}
	where $B_i^k := B(x_i^k, 14r(x_i^k))$.
	We claim that $a_i^k$ is a $T^{p,\infty}$-atom associated with the ball $B_i^k$.
	The estimate \eqref{eqn:easy-atom-est} immediately implies the size condition
	\begin{equation*}
		\norm{a_i^k}_{T^{\infty,\infty}} \leq \gm(B_i^k)^{\gd_{p,\infty}},
	\end{equation*}
	so we need only show that $a_i^k$ is essentially supported in $T(B_i^k)$.
	To show this, it is sufficient to show that if $y \in B(x_i^k, 2r(x_i^k))$ and $d(y, (O^k)^c) \geq t$, then $d(y,(B_i^k)^c) \geq t$.
	Suppose $z \notin B_i^k$ (such a point exists because $\gm(B_i^k) < \gm(X) = \infty$), $\ge > 0$ and $u \notin O^k$ such that
	\begin{equation*}
		d(x_i^k, u) < d(x_i^k, (O^k)^c) + \ge = 10r(x_i^k) + \ge.
	\end{equation*}
	Then we have
	\begin{align*}
		d(y,z) + \ge &\geq d(z,x_i^k) - d(x_i^k, y) + \ge\\
		&\geq 12 r(x_i^k) + \ge \\
		&= 2r(x_i^k) + 10r(x_i^k) + \ge \\
		&> d(y,x_i^k) + d(x_i^k, u) \\
		&\geq d(y,u) \\
		&\geq t,
	\end{align*}
	where the last line follows from $u \notin O^k$ and $d(y,(O^k)^c) \geq t$.
	Since $z \notin B_i^k$ and $\ge > 0$ were arbitrary, this shows that $d(y,(B_i^k)^c) \geq t$ as required, which proves that $a_i^k$ is a $T^{p,\infty}$-atom associated with $B_i^k$.
	
	Thus we have
	\begin{equation*}
		f(y,t) = \sum_{k \in \ZZ} \sum_{i \in I^k} \gl_i^k a_i^k,
	\end{equation*}
	where
	\begin{equation*}
		\gl_i^k = 2^{k+1} \gm(B_i^k)^{1/p}.
	\end{equation*}
	It only remains to show that
	\begin{equation*}
		\sum_{k \in \ZZ} \sum_{i \in I^k} |\gl_i^k|^p \lesssim \norm{f}_{T^{p,\infty}}^p.
	\end{equation*}
	We estimate
	\begin{align}
		\sum_{k \in \ZZ} \sum_{i \in I^k} |\gl_i^k|^p
		&= \sum_{k \in \ZZ} 2^{(k+1)p} \sum_{i \in I^k} \gm(B_i^k) \nonumber\\
		&\lesssim_X \sum_{k \in \ZZ} 2^{(k+1)p} \sum_{i \in I^k} \gm(B(x_i^k, r(x_i^k)/4)) \label{line:doublinguse} \\
		&\leq \sum_{k \in \ZZ} 2^{(k+1)p} \gm(O^k) \label{line:disjointnessuse} \\
		&\lesssim_p p\sum_{k \in \ZZ} \int_{2^{k-1}}^{2^k} t^{p-1} \gm(\{x \in X : \mc{A}^\infty f(x) > t\}) \, dt \nonumber \\
		&= \norm{\mc{A}^\infty f}_{L^p(X)}^p \nonumber \\
		&= \norm{f}_{T^{p,\infty}}, \nonumber
	\end{align}
	using doubling in \eqref{line:doublinguse} and pairwise disjointness of the balls $B(x_i^k, r(x_i^k)/4)$ in \eqref{line:disjointnessuse}.
	This completes the proof in the case that $\gm(X) = \infty$.
	
	\textbf{Case 2: $\gm(X) < \infty$.}
	In this case we may have $O^k = X$ for some $k \in \ZZ$, so we cannot apply Lemma \ref{lem:russ} as before.
	One can follow the argument of Russ \cite[page 131]{eR07}, which shows that the partition of unity is not required for such $k$.
	With this modification, the argument of the previous case still works.
	We omit the details.
\end{proof}

\footnotesize
\bibliographystyle{amsplain}
\bibliography{WTS-bib}  

\end{document}